\newtheorem{theorem}{Theorem}[section]
\newtheorem{lemma}[theorem]{Lemma}
\newtheorem{corollary}[theorem]{Corollary}
\theoremstyle{definition}
\newtheorem{definition}[theorem]{Definition}
\newtheorem{example}[theorem]{Example}
\newtheorem{claim}[theorem]{Claim}
\newtheorem{conjecture}[theorem]{Conjecture}
\theoremstyle{remark}
\newtheorem{remark}[theorem]{Remark}
\numberwithin{equation}{section}
\begin{document}

\title{On the Neuwirth conjecture for knots}

\author{Makoto Ozawa}
\address{Department of Natural Sciences, Faculty of Arts and Sciences, Komazawa University, 1-23-1 Komazawa, Setagaya-ku, Tokyo, 154-8525, Japan}
\email{w3c@komazawa-u.ac.jp}

\author{J. Hyam Rubinstein}
\address{Department of Mathematics and Statistics, The University of Melbourne
Parkville, Victoria 3010, Australia}
\email{rubin@ms.unimelb.edu.au}

\subjclass[2000]{Primary 57M25}



\keywords{Neuwirth conjecture, Even boundary slope conjecture, pre-essential surface, Murasugi sum, knot minor, pretzel surface, Montesinos knot, generalized arborescently alternating link, degree one map, normal surface, algorithm}

\begin{abstract}
Neuwirth asked if any non-trivial knot in the 3-sphere can be embedded in a closed surface so that the complement of the surface is a connected essential surface for the knot complement. In this paper, we examine some variations on this question and prove it for all knots up to 11 crossings except for two examples. We also establish the conjecture for all Montesinos knots and for all generalized arborescently alternating knots. For knot exteriors containing closed incompressible surfaces satisfying a simple homological condition, we establish that the knots satisfy the Neuwirth conjecture. If there is a proper degree one map from knot $K$ to knot $K'$ and $K'$ satisfies the Neuwirth conjecture then we prove the same is true for knot $K$. Algorithms are given to decide if a knot satisfies the various versions of the Neuwirth conjecture and also the related conjectures about whether all non-trivial knots have essential surfaces at integer boundary slopes.
\end{abstract}

\maketitle
\tableofcontents

\section{Conjectures}

In 1964, Neuwirth conjectured in \cite{N}, \cite{N2} that the fundamental group of the complement of a non-trivial knot in the 3-sphere is a non-trivial free product with amalgamation, and the amalgamating subgroup is free.
In 1984, this conjecture was solved by Culler--Shalen, by realising such an algebraic splitting via a suitable properly embedded surface in the exterior of the knot. 

\begin{theorem}[Weak Neuwirth Conjecture, \cite{CS}]
For any non-trivial knot $K$, there exists a properly embedded separating, orientable, incompressible and boundary incompressible surface in the exterior $E(K)$.
\end{theorem}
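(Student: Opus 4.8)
The plan is to produce a non-trivial splitting of $\pi_1(E(K))$ and then realise it by an embedded surface, following the method of ideal points of the $SL_2(\mathbf{C})$ character variety.

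First I would record the standard reductions. Since $K$ is non-trivial, $E(K)$ is irreducible (a $2$-sphere in $S^3$ bounds a ball, so a $2$-sphere in $E(K)$ bounds a ball there) and boundary-irreducible: a compressing disc for $\partial E(K)$ would, after surgery and an appeal to irreducibility, exhibit $E(K)$ as a solid torus and force $K$ to be trivial. Hence $\partial E(K)$ is an incompressible torus, $\pi_1(\partial E(K)) \cong \mathbf{Z}^2$ injects into $G := \pi_1(E(K))$, and in particular $G$ is not free, since no free group contains a copy of $\mathbf{Z}^2$. Thus we are in the setting of a Haken knot exterior with non-free fundamental group, and the goal becomes to exhibit $G$ as a non-trivial amalgamated product and to show the splitting is carried by a surface.

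Next I would pass to the character variety $X(E(K))$ of $SL_2(\mathbf{C})$-representations. The input one needs is a $1$-dimensional component $C \subseteq X(E(K))$ containing the character of an irreducible representation: for hyperbolic $K$ take the component through the discrete faithful representation (complex dimension $1$ by Thurston, since $E(K)$ has a single cusp); for torus knots use the Seifert structure; for satellite or composite $K$ use the essential companion torus or the amalgam $\pi_1(E(K_1)) *_{\mathbf{Z}} \pi_1(E(K_2))$; and in general such a curve exists because knot groups admit irreducible $SL_2(\mathbf{C})$-representations and every irreducible character of a knot group lies on a positive-dimensional component by the Thurston--Culler--Shalen dimension bound. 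On a smooth projective model of $C$ one chooses an ideal point $\tilde{x}$ and lets $v$ be the associated discrete valuation on the function field $F$ of $C$; composing the tautological representation $G \to SL_2(F)$ with the action of $SL_2(F)$ on the Bass--Serre tree $T_v$ attached to $v$ yields an action of $G$ on the simplicial tree $T_v$ with no global fixed point, because $C$ is not a point and so some trace function blows up at $\tilde{x}$. Bass--Serre theory then turns this into a non-trivial graph-of-groups decomposition of $G$, and the theory relating such decompositions of fundamental groups of irreducible, boundary-irreducible $3$-manifolds to embedded two-sided incompressible surfaces (Stallings, Waldhausen, and the surface-detection of Culler--Shalen) realises it by an incompressible, boundary-incompressible surface $F$ in $E(K)$, which can be arranged to meet $\partial E(K)$ in a single slope.

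The step I expect to be the main obstacle is forcing $F$ to be \emph{separating}, equivalently forcing the splitting to be an honest amalgamated product rather than an HNN extension. The homological constraint itself is benign: since $H_2(E(K);\mathbf{Z}/2)=0$, a closed essential surface in $E(K)$ is automatically separating, and a non-closed essential surface separates if and only if it has an even number of boundary curves (its boundary slope being a primitive, hence mod-$2$ non-zero, class on the torus). So the delicate task is to choose the ideal point, the component $C$, or the starting representation so that the induced splitting is an amalgam with empty or even boundary; in particular one must exclude the purely ascending HNN outcome, which would force $F$ to be the fibre of a fibration of $E(K)$, by locating an ideal point whose associated surface is not a fibre (using that the character variety of a fibered knot is not exhausted by its fibration data). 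The remaining ingredients --- the incompressibility and boundary-incompressibility of $F$, and the passage between tree actions, graphs of groups, and surfaces --- are standard. With a little more care one can also take $F$ to have non-empty boundary, hence with free fundamental group, recovering Neuwirth's original amalgam-with-free-amalgamating-subgroup formulation.
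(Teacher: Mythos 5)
The paper does not prove this statement; it quotes it directly as Culler--Shalen's theorem from \cite{CS}, so there is no in-paper proof to compare against. Your outline is an attempt to reconstruct the argument of that cited source, and the scaffolding you set up --- irreducibility and boundary-irreducibility of $E(K)$, a curve $C$ in the $SL_2(\mathbf{C})$ character variety through an irreducible character, the discrete valuation at an ideal point, the tautological action on the Bass--Serre tree with no global fixed point, and the passage to an embedded essential surface dual to the splitting --- is exactly the Culler--Shalen framework. Your homological observation that an orientable essential surface at a non-longitudinal slope necessarily has an even number of boundary curves and is therefore separating is also correct and is a genuine ingredient.

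The genuine gap is in the step you yourself flag as ``the main obstacle.'' You propose to force the splitting to be an amalgam rather than an HNN extension by ``excluding the purely ascending HNN outcome, which would force $F$ to be the fibre of a fibration.'' This confuses two different things. A non-separating essential surface corresponds to \emph{any} HNN splitting $G = A*_{C}$; only the \emph{ascending} HNN case (edge group mapping onto one vertex group) forces the surface to be a fibre. There are plenty of non-separating essential surfaces that are not fibres --- any non-fibred Seifert surface is such an example --- and your argument does nothing to rule out the ideal point producing one of these. So the proposed reduction to ``not a fibre'' does not establish ``separating,'' and the conjectural statement ``the character variety of a fibered knot is not exhausted by its fibration data'' is in any case too vague to carry the proof even in that special case. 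The actual content of Culler--Shalen's argument at this point is a more delicate analysis of the trace functions of peripheral elements at the ideal point (showing that the detected boundary slope is not longitudinal, or that a closed essential surface arises, in either of which case separation follows from your homological remark), and that analysis is missing from your sketch. Until that is supplied, the proof does not go through.
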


However, a geometrical conjecture which is an original source of the Weak Neuwirth Conjecture has not been solved.
The following Neuwirth Conjecture asserts that any non-trivial knot can be embedded in a closed surface, similarly to the way a torus knot can be embedded in an unknotted torus.

\begin{conjecture}[Neuwirth Conjecture, \cite{N}]
For any non-trivial knot $K$, there exists a closed surface $F$ containing $K$ such that $F\cap E(K)$ is connected, incompressible and boundary incompressible.
\end{conjecture}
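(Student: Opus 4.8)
The plan is to recast the conjecture in terms of spanning surfaces, build such surfaces explicitly for structured families of knots via Murasugi sums, transport them along degree one maps, and produce them by tubing when a suitable closed incompressible surface is present; the honest obstacle is the residual general case. Note first that the Weak Neuwirth surface of Culler--Shalen, being \emph{separating}, caps off to a closed surface whose complement is disconnected, so it does not settle the geometric statement — producing a \emph{connected} essential complement is precisely the difficulty.

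\medskip

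\emph{Step 1: reformulation via pre-essential spanning surfaces.} Let $S$ be a connected compact surface embedded in $S^3$ with $\partial S = K$ (a spanning surface), let $N(S)$ be a regular neighborhood, and set $F=\partial N(S)$, a closed orientable surface with $K\subset F$, and $\widehat S = F\cap E(K)$. When $S$ is orientable, $N(S)\cong S\times[0,1]$ and $\widehat S$ is two parallel copies of $S$; when $S$ is non-orientable, $N(S)$ is a twisted $I$-bundle, $\widehat S$ is connected (the orientation cover of $S$, with a collar correction along $K$), and $\partial\widehat S$ has even integer boundary slope on $\partial E(K)$. Hence the Neuwirth conjecture is equivalent to: every non-trivial knot bounds a \emph{pre-essential} spanning surface, i.e.\ a necessarily non-orientable $S$ for which $\widehat S$ is incompressible and boundary incompressible in $E(K)$. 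I would run the entire argument at the level of $\widehat S$; this is also the bridge to the even boundary slope conjecture, since a pre-essential surface is exactly an essential surface realized at an even integer slope.

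\medskip

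\emph{Step 2: Montesinos and generalized arborescently alternating knots.} For these, exhibit $S$ as a Murasugi sum (plumbing) of elementary non-orientable pieces --- twisted bands and M\"obius bands placed over the rational tangles/clasps of the arborescent description --- chosen so the assembled $S$ is non-orientable. Incompressibility of $\widehat S$ then follows from a plumbing-compatibility lemma for these frontier surfaces: a Murasugi sum of pre-essential spanning surfaces along an appropriate arc is again pre-essential, the analogue in this setting of Gabai's theorem that Murasugi sums of norm-minimizing (hence incompressible) surfaces remain so. The base cases --- rational knots and individual pretzel summands --- are checked directly, e.g.\ by recognizing the complement of $\widehat S$ in $E(K)$ as a handlebody with an obviously incompressible boundary pattern, or by a normal-surface/angled-polyhedron argument.

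\medskip

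\emph{Step 3: degree one maps, closed surfaces, and the obstacle.} Given a proper degree one map $f\colon E(K)\to E(K')$ and a Neuwirth surface for $K'$ with essential frontier $\widehat S'$ of a non-orientable spanning surface $S'$, make $f$ transverse to $S'$ and set $S=f^{-1}(S')$. Because a proper degree one map is $\pi_1$-surjective and carries the basis $(\mu,\lambda)$ of $H_1(\partial E(K))$ to $(\pm\mu',\pm\lambda')$, $S$ is (after cleanup) a spanning surface for $K$ of the same even slope, and $\widehat S$ maps $\pi_1$-surjectively to $\widehat S'$; surgering along its compressions and boundary compressions (which cannot all be inessential without killing $\pi_1\widehat S'$) yields a pre-essential spanning surface for $K$, whose union with the annulus $F\cap N(K)$ is the required closed surface in $S^3$ --- this is what the ``knot minor'' language records. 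Similarly, when $E(K)$ carries a closed incompressible surface $Q$ satisfying the stated homological condition, tube $Q$ to a Seifert surface or to $\partial E(K)$ along the arc that condition supplies to get a connected essential $\widehat S$. The genuine obstacle is the general case: for a small knot with no closed essential surface and no visible plumbing structure --- exactly the situation of the two open 11-crossing examples --- none of these constructions apply, and one needs either a new mechanism producing an essential non-orientable spanning surface or an obstruction showing some knot admits none; deciding which holds in a given instance is the role of the normal-surface algorithms developed later in the paper.
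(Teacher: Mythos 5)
The statement you were asked to ``prove'' is the Neuwirth Conjecture itself, which remains open; the paper states it as a conjecture and only establishes it for various families (up to 11 crossings with two exceptions, Montesinos knots, generalized arborescently alternating knots, knots admitting a closed incompressible surface with the stated $\Bbb{Z}_2$-homological property, and knots admitting a degree one map onto a Neuwirth knot). You correctly recognize this and your plan faithfully tracks the paper's actual machinery: pre-essential surfaces, Murasugi sums and state surfaces, pretzel minors, degree one maps surgered in the Stallings style, the homological tubing argument, and normal-surface algorithms for the residual cases. Your honest flagging of the two 11-crossing exceptions as the genuine obstacle matches the paper.

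There is, however, one substantive inaccuracy in Step~1: the Neuwirth Conjecture is \emph{not} equivalent to the existence of a pre-essential (non-orientable) spanning surface. A Neuwirth surface is any orientable essential surface with two boundary components at an integer slope; it need not arise as $\partial N(S)$ for non-orientable $S$, and its slope need not be even. The paper's own example makes this sharp: odd torus knots $K_{p,q}$ with $p,q$ both odd satisfy the Neuwirth Conjecture via the essential annulus at slope $pq$, yet they admit \emph{no} pre-essential surface at all, precisely because their only essential surfaces (annuli and the minimal Seifert surface) sit at odd or longitudinal slopes. What you have rederived is closer to the Strong Neuwirth Conjecture, which is restricted to prime non-torus knots for exactly this reason. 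Relatedly, ``a pre-essential surface is exactly an essential surface realized at an even integer slope'' overstates: pre-essential surfaces have even \emph{rational} slope; only spanning pre-essential ones force an even integer slope and hence a Neuwirth surface. With that correction, your outline is a fair account of the paper's partial results; as a proof of the conjecture, of course, it does not (and cannot yet) close.
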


The following knot classes are known to satisfy the Neuwirth Conjecture.

\begin{itemize}
\item Torus knots and cable knots
\item 2-bridge knots
\item Alternating knots ({\cite[Theorem 9.8]{A}}, {\cite[Proposition 2.3]{MT}})
\item Generalized alternating knots ({\cite[Theorem 2]{O}})
\item Non-positive $+$-adequate knots (\cite[Corollary 2.2]{O2})
\item Crosscap number 2 knots (\cite[Theorem 6]{IOT})
\item Tunnel number 2 knots which can be non-separatingly embedded in a genus two Heegaard surface (\cite[Lemma 2.3]{O6}, \cite[Lemma 1]{O4})
\item Knots with accidental surfaces with non-separating accidental peripherals (\cite[Theorem 2]{IO})
\end{itemize}

We will show in Corollary \ref{simple}, that if a knot $K$ satisfies the Neuwirth Conjecture then so does any satellite knot obtained from $K$ and any composite knot obtained by summing a knot with $K$. This implies that to prove the Neuwirth Conjecture, it suffices to consider only 
simple knots. 

Almost all known examples of knots satisfying the Neuwirth Conjecture, are obtained by taking boundaries of regular neighbourhoods of algebraically incompressible and boundary incompressible non-orientable spanning surfaces, except for torus knots and the last two classes in the above list.
Therefore, the following Strong Neuwirth Conjecture is plausible.

\begin{conjecture}[Strong Neuwirth Conjecture, {\cite[Question 5]{IOT}}]\label{strong}
For any prime non-torus knot $K$, there exists a non-orientable spanning surface $F$ for $K$ such that $F\cap E(K)$ is algebraically incompressible and boundary incompressible.
\end{conjecture}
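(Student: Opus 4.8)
The plan is to attack the conjecture one structured class of knots at a time rather than by a single uniform argument, and to use a version of the satellite/composite reduction (Corollary \ref{simple}), adapted from the Neuwirth setting to spanning surfaces, to pass to prime simple knots; the genuine content then lies with hyperbolic knots. The observation that organizes the whole program is that a non-orientable spanning surface $F$ which is algebraically incompressible and boundary incompressible produces, as the boundary of its regular neighbourhood in $S^3$, exactly a closed surface of the kind demanded by the Neuwirth Conjecture, so the classes for which one proves Neuwirth by this route are precisely the classes to which the Strong form should apply, and the bulk of the program runs parallel to the Neuwirth arguments of the paper.

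First I would treat the knots that carry an \emph{obvious} candidate surface. For a Montesinos knot the natural choice is a pretzel surface, a Murasugi sum of unknotted bands; for a generalized arborescently alternating knot it is one of the two checkerboard surfaces of its alternating diagram on the closed surface on which the diagram is alternating. In each case the strategy is: (i) arrange $F$ to realize the crosscap number; (ii) establish boundary incompressibility by an innermost-arc / band-swapping argument exploiting the explicit combinatorial structure; and (iii) establish algebraic incompressibility either from the Murasugi-sum decomposition, where incompressibility is inherited under plumbing following Gabai, or by the Menasco--Thistlethwaite-type combinatorics behind incompressibility of alternating checkerboard surfaces (\cite[Theorem 9.8]{A}, \cite[Proposition 2.3]{MT}). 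A subtle point throughout is that ``algebraically incompressible'' means $\pi_1$-injectivity of $F$, and for non-orientable $F$ this must be checked against the orientation double cover of $F$ sitting inside $E(K)$, ruling out a compression that becomes visible only upstairs.

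Next, for a knot $K$ admitting a proper degree-one map $f\colon E(K)\to E(K')$ with $K'$ already known to satisfy the Strong conjecture, the idea is that $f$ is $\pi_1$-surjective, so the amalgamated-product or HNN splitting of $\pi_1(E(K'))$ carried by the non-orientable spanning surface $F'$ pulls back to a splitting of $\pi_1(E(K))$; one then realizes the pulled-back splitting by an embedded surface in $E(K)$ via the Stallings--Culler--Shalen mechanism behind the Weak Neuwirth Conjecture, and checks that the realizing surface can be taken non-orientable, connected, and with a single boundary curve meeting the meridian of $K$ once, hence a spanning surface for $K$. Controlling the mod $2$ first homology of the realizing surface through $f$ so that it comes out one-sided, rather than two-sided, is the most delicate bookkeeping of the program.

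The main obstacle is the residual class: hyperbolic knots with no distinguished diagrammatic spanning surface and no degree-one map to a simpler knot. There is no a priori bound on the crosscap number or normal-surface complexity of a surface that could serve, and, as the two exceptional $11$-crossing knots indicate, even small knots resist the current toolkit. For these the realistic output is algorithmic rather than structural: enumerate the vertex normal spanning surfaces of $E(K)$, test each for incompressibility and boundary incompressibility (decidable by Jaco--Oertel and Haken normal-surface theory), and certify the conjecture knot by knot. A proof in full generality would instead require a genuinely new source of non-orientable essential surfaces --- for instance a sutured-manifold hierarchy argument carried out in the non-orientable category, or a geometric realization of a group splitting that automatically forces the realizing surface to be one-sided --- and it is here, rather than in any single computation, that the difficulty is concentrated.
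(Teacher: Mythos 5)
The item you were asked to prove is stated in the paper as a \emph{conjecture}, not a theorem, and the paper supplies no proof of it; the Strong Neuwirth Conjecture remains open. There is therefore no proof in the paper to compare against, and your write-up correctly treats the problem as a program of partial results rather than a demonstration. What you can usefully do is check that the pieces of your program actually line up with what the paper proves, and there are two places where they do not.

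First, the reduction to simple knots. Corollary~\ref{simple} in the paper concerns the Neuwirth Conjecture and pre-essential surfaces: it says that if $K$ satisfies those properties, then so do satellites of $K$ and sums $K\#K'$. It does \emph{not} assert the analogous inheritance for the Strong Neuwirth Conjecture, i.e.\ for the existence of an algebraically incompressible and boundary incompressible non-orientable spanning surface. The paper's only statement in that direction is the Remark following Conjecture~\ref{strong}, which treats connected sums and not satellites. Since the Strong Conjecture is phrased for prime non-torus knots, the composite reduction is moot, and the satellite reduction you invoke is simply not available from the paper. Second, the degree-one-map step: Theorem~\ref{degreeone} and Corollary~\ref{pre-essentialpullback} give pullback of a Neuwirth surface and of a pre-essential surface, respectively, under a proper degree-one map; neither establishes pullback of an essential non-orientable \emph{spanning} surface, which is what the Strong Conjecture requires. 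A pre-essential surface $S$ is defined by $\partial N(S)$ compressing to something essential, which is weaker than $S$ itself being algebraically incompressible and boundary incompressible, so the conclusion you want does not follow from the cited statements. Your Stallings--Culler--Shalen sketch for realizing the pulled-back splitting also does not by itself control one-sidedness or connectedness of the realizing surface, and the paper makes no claim of this kind. The pieces of your program that do line up with the paper are the Montesinos case (Theorem~\ref{Montesinos}) and the generalized arborescently alternating case (Theorem~\ref{generalized arborescently alternating}), both of which are proved via Murasugi sums and the alternating checkerboard results as you outline, and the algorithmic fallback (Theorem~\ref{algorithms}), which uses fundamental normal surfaces in the double $2E(K)$ and the Jaco--Oertel and Jaco--Sedgwick machinery much as you describe.
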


It seems to be unknown whether Conjecture \ref{strong} holds even if the condition weakened.

\begin{conjecture}[Weakly Strong Neuwirth Conjecture]
For any prime non-torus knot $K$, there exists a non-orientable spanning surface $F$ for $K$ such that $F\cap E(K)$ is geometrically incompressible and boundary incompressible.
\end{conjecture}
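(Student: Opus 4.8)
The natural line of attack is to produce the required surface by minimising complexity. Every non-trivial knot bounds a non-orientable spanning surface --- for instance a checkerboard surface of a diagram, or a minimal-genus Seifert surface with one small half-twisted band attached; the latter construction also shows that the crosscap number satisfies $\gamma(K)\le 2g(K)+1$, where $g(K)$ denotes the genus. So one may pick, among all non-orientable spanning surfaces of $K$, one --- call it $F$ --- with $b_1(F)$ as small as possible, where $b_1$ is the first Betti number; set $\widehat{F}=F\cap E(K)$, and try to show that this minimality forces $\widehat{F}$ to be geometrically incompressible and boundary incompressible.

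Suppose $D$ is a compressing disk for $\widehat{F}$, so that $\partial D=c$ is an essential simple closed curve of $F$ bounding $D$ on the outside of $F$. If $c$ is two-sided, compressing $F$ along $D$ yields a spanning surface $F'$ of $K$ with $b_1(F')<b_1(F)$: when $c$ is non-separating $b_1$ drops by $2$, and when $c$ is separating one keeps the component of the compressed surface containing $\partial F$ and discards the closed one, which, as $c$ bounds a disk on neither side of $F$, leaves a piece of strictly smaller $b_1$. If $c$ is one-sided, pushing a parallel copy of $D$ off along the M\"{o}bius-band neighbourhood $M$ of $c$ in $F$ shows that $\partial M$ bounds a disk on the outside of $F$; excising $M$ and capping with that disk again gives a spanning surface of smaller $b_1$. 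Similarly a boundary-compressing disk for $\widehat{F}$ produces, by the standard cut-along-the-essential-arc manoeuvre (then discard any disk component and cap off any inessential boundary curve on $\partial E(K)$), a spanning surface of $K$ with strictly smaller $b_1$. As $b_1(F)$ was chosen minimal among \emph{non-orientable} spanning surfaces, each of these moves gives a contradiction \emph{provided the surface it produces is again non-orientable}. This proviso is automatic whenever $\gamma(K)\le 2g(K)$ --- equivalently, in view of $\gamma(K)\le 2g(K)+1$, whenever $\gamma(K)\ne 2g(K)+1$ --- since then $\gamma(K)$ is the least value of $b_1$ over \emph{all} spanning surfaces, and the surface just produced would have $b_1$ below that absolute minimum, orientable or not, which is impossible. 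Hence the conjecture holds for every prime non-torus knot with $\gamma(K)\le 2g(K)$. This already covers a large class of knots, and fits the families listed above: for the alternating, $2$-bridge, crosscap-number-$2$ and pretzel/Montesinos knots the surface that proves the Neuwirth conjecture is already a non-orientable spanning surface of the required kind. Moreover, by an adaptation of Corollary~\ref{simple}, it would suffice to treat simple knots.

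The one genuine obstacle is exactly the proviso set aside above: a compression or boundary compression of a non-orientable surface can yield an \emph{orientable} surface --- and this can happen only in the residual case $\gamma(K)=2g(K)+1$ --- so minimality among non-orientable surfaces then gives no contradiction. To finish, one would want, given such a bad disk $D$ for a minimal non-orientable $F$, a replacement move restoring non-orientability without raising complexity: for instance re-attaching a half-twisted band along an arc of $\partial F$ lying outside the part of $F$ affected by $D$, and then arguing, using that $E(K)$ is irreducible and (after reducing to simple knots) atoroidal, that this extra band can be absorbed without creating a new compression. Carrying this through seems to require a genuinely new idea, and is presumably where the hypotheses \emph{prime} and \emph{non-torus} must finally enter --- consistent with the fact that the known Neuwirth surfaces for torus knots do not arise from spanning surfaces at all. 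A less satisfying alternative, for any particular knot, is to feed it to the decision procedures announced in the abstract, which settle the conjecture knot by knot rather than in general.
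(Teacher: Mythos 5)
Be aware that the statement in question is an \emph{open conjecture}, not a theorem: the paper states it as the Weakly Strong Neuwirth Conjecture and notes explicitly that it is unknown whether the Strong Neuwirth Conjecture holds even with the weakened hypothesis; the algorithmic Theorem \ref{algorithms} on the various conjectures likewise excludes the weakly strong version. There is therefore no proof of the authors' to compare against, and what you have written is, rightly, an admitted incomplete attempt rather than a proof.

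On the partial argument itself, two remarks. The boundary of a compressing disk for an embedded surface is always two-sided: if $c=\partial D$ were one-sided in $F$, then $D\cap N(c)$ would be a collar annulus ending on a curve $c'\subset\partial N(c)$ of longitudinal winding one, while $F\cap\partial N(c)$ is the boundary of a M\"obius band, a $(p,2)$-curve with $p$ odd; the algebraic intersection number of $c'$ with $F\cap\partial N(c)$ is odd, forcing $D\cap F\supsetneq c$. So your one-sided case is vacuous, and the ``push off a parallel copy of $D$'' manoeuvre you invoke there is not actually available --- the push-off of $c$ in $F$ is $\partial M$, a double cover of $c$, which cannot bound a parallel copy of a disk with boundary $c$. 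For the two-sided compressions and the boundary compressions the minimality reasoning in the range $\gamma(K)\le 2g(K)$ is essentially correct, granting some care with the boundary curve count after a boundary compression (the meridional intersection number $\partial\hat F\cdot\mu=1$ forces at most one essential boundary component, so the discard-and-cap step works). The decisive gap is exactly the one you flag: when $\gamma(K)=2g(K)+1$ a compression may produce a minimal-genus Seifert surface, and minimality among non-orientable spanning surfaces then gives no contradiction. Nothing in the paper supplies a way around this --- that is precisely why the statement remains conjectural --- and your ``reattach a half-twisted band'' suggestion is a hope, not an argument.
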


\begin{remark}
It can be observed that a composite knot bounds an algebraically (geometrically) incompressible and boundary incompressible non-orientable spanning surface if and only if at least one of the factor knots also bounds an algebraically (geometrically) incompressible and boundary incompressible non-orientable spanning surface.
\end{remark}

The existence of an algebraically incompressible and boundary incompressible non-orientable spanning surface in Conjecture \ref{strong} implies the following Conjecture \ref{strong even}.
It seems to be unknown whether Conjecture \ref{strong even} holds even if the boundary slope is non-integer.

\begin{conjecture}[Even Boundary Slope Conjecture]\label{even}
For any prime non-torus knot $K$, there is a properly embedded orientable incompressible and boundary incompressible surface, which is not a Seifert surface, in the exterior $E(K)$ with boundary slope an even rational number. 
\end{conjecture}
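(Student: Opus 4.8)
The plan is to deduce the Even Boundary Slope Conjecture (Conjecture~\ref{even}) from the Strong Neuwirth Conjecture (Conjecture~\ref{strong}) by passing from a non-orientable spanning surface to the boundary of a regular neighbourhood of it. So suppose $K$ is a prime non-torus knot and $F$ is a non-orientable spanning surface for $K$ with $\partial F=K$ such that $\widehat F:=F\cap E(K)$ is algebraically incompressible and boundary incompressible; after discarding closed components we may assume $F$ is connected. Let $N(F)$ be a regular neighbourhood of $F$ in $E(K)$ and put $S:=\partial N(F)\cap E(K)$. I claim that $S$ is the surface demanded by Conjecture~\ref{even}: it is orientable, it is not a Seifert surface, its boundary slope is an even integer, and it is incompressible and boundary incompressible in $E(K)$.

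First I would dispose of the homological properties. Since $E(K)$ is orientable while $F$ is not, $N(F)$ is the orientation $I$-bundle over $F$, so $S$ is homeomorphic to the orientation double cover $\widetilde F\to F$; in particular $S$ is orientable, and it is connected because $F$ is non-orientable. Its boundary consists of the two curves covering $\partial F=K$, each of which lies on $\partial N(K)$ as a parallel copy of the curve obtained by pushing $K$ off itself along $F$. Hence $\abs{\partial S}=2$, so $S$ is not a Seifert surface; and the boundary slope of $S$ coincides with the boundary slope of the spanning surface $F$, which is an even integer, as is well known for spanning surfaces of knots. So $S$ already meets every requirement of Conjecture~\ref{even} apart from essentiality.

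The substantive step, and the one I expect to carry the weight of the proof, is that $S$ is incompressible and boundary incompressible in $E(K)$. Algebraic incompressibility of $\widehat F$ forces it to be geometrically incompressible, and $E(K)$ is irreducible, boundary irreducible, and---being a submanifold of $S^{3}$---contains no embedded real projective plane. Suppose $D$ is a compressing disk for $S$ with $\partial D$ essential in $S$, and let $c$ be the image of $\partial D$ under the $I$-bundle projection $N(F)\to F$. If $\partial D\to c$ is injective, then $c$ is a two-sided simple closed curve in $F$ isotopic to $\partial D$ inside $N(F)$, hence bounding a disk in $E(K)$; if $c$ is essential in $F$ this contradicts geometric incompressibility of $\widehat F$, while if $c$ is inessential in $F$ then $\partial D$ is inessential in $S$, a contradiction either way. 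If instead $\partial D\to c$ is two-to-one, then $c$ is one-sided in $F$ and $\partial D$ lies on the boundary torus of the solid torus $V_{c}$ cut out by $N(F)$ over a M\"obius-band neighbourhood of $c$; there $\partial D$ is a $(2,q)$-curve with $q$ odd, so it bounds a M\"obius band $B\subset V_{c}$, whence $D\cup_{\partial D}B$ is an embedded real projective plane in $E(K)\subset S^{3}$, which is impossible. An entirely parallel analysis, using essential versus boundary-parallel \emph{arcs} in place of simple closed curves and invoking boundary incompressibility of $\widehat F$, should rule out boundary compressing disks for $S$. Since $S$ is two-sided in the irreducible manifold $E(K)$, the absence of such disks gives exactly that $S$ is incompressible and boundary incompressible.

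The hard part is the second case above, together with its boundary-arc analogue: one must check that every essential simple closed curve (respectively essential arc) on the orientation double cover either descends to an essential curve (arc) in $F$---and so contradicts geometric incompressibility of $\widehat F$---or else produces a one-sided real projective plane. In a general ambient $3$-manifold that last alternative need not be vacuous, which is precisely why the Neuwirth-type conjectures are formulated for knots in $S^{3}$; here it is vacuous because $\mathbf{R}P^{2}$ does not embed in $S^{3}$. Carrying the arc version through cleanly---deciding which arcs of $\partial D\cap\partial E(K)$ project injectively and organising the $\partial$-compression accordingly---is the one place I anticipate needing genuine care. The assumption that $\widehat F$ is algebraically, rather than merely geometrically, incompressible and boundary incompressible is kept mainly to match the formulation of Conjecture~\ref{strong} and to work at the level of $\pi_{1}$; for knot exteriors one should be able to run the argument from geometric incompressibility alone.
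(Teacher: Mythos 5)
The statement you are addressing is a conjecture, and the paper does not prove it; it has no proof, being open. What the paper does is assert, without argument, that Conjecture~\ref{strong} implies Conjecture~\ref{strong even} (of which Conjecture~\ref{even} is a formal weakening), and your proposal correctly reads its task as supplying that implication. Your overall plan --- take $S=\partial N(\widehat F)\cap E(K)$ for a Strong-Neuwirth surface $\widehat F$, observe that $S$ is orientable, has two boundary components (hence is not a Seifert surface) and has the same even integer boundary slope as $\widehat F$, and then check essentiality --- is exactly the intended one.

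However, the essentiality step is carried out in a needlessly risky way, and the risk materialises as a gap. You begin by downgrading the hypothesis from \emph{algebraic} incompressibility and boundary incompressibility of $\widehat F$ to the geometric versions, and then analyse compressing disks for $S$ by cases on whether $\partial D$ projects one-to-one or two-to-one to $\widehat F$, invoking the non-embeddability of $\mathbf{R}P^{2}$ in $S^{3}$ in the second case. But the algebraic hypothesis passes to $S$ for free and with no case analysis: the restriction of the $I$-bundle projection gives the orientation double cover $S\to\widehat F$, which is $\pi_1$-injective, so $\pi_1(S)\to\pi_1(E(K))$ is a composition of injections, and similarly for the relative $\pi_1$ used to detect boundary compressions; since $S$ is two-sided, the loop theorem then converts algebraic incompressibility and boundary incompressibility into the geometric versions. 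This is the argument the authors evidently have in mind, and it sidesteps entirely the boundary-arc analogue of your case analysis, which you flag as ``the one place I anticipate needing genuine care'' and in fact do not carry through. The difficulty there is real: a simple arc in $S$ need not project to a simple arc in $\widehat F$, and your binary injective-versus-two-to-one dichotomy does not exhaust the possibilities for an arc under a double cover, so the proposal as written has a gap precisely at boundary incompressibility.

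I would also caution against your closing suggestion that geometric incompressibility of $\widehat F$ should suffice. Geometric incompressibility of a one-sided surface does not in general yield incompressibility of $\partial N(\widehat F)$, which is why both Conjecture~\ref{strong} and the paper's notion of pre-essential surface insist on the algebraic condition. The $\mathbf{R}P^{2}$ argument is a plausible substitute in the closed-curve case for submanifolds of $S^{3}$, but it does not obviously handle boundary compressions, and nothing in the paper relies on such an extension. Use the algebraic hypothesis directly; it is the whole point of the formulation of Conjecture~\ref{strong}.
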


\begin{conjecture}[Strong Even Boundary Slope Conjecture]\label{strong even}
For any prime non-torus knot $K$, there is a properly embedded orientable incompressible and boundary incompressible surface, which is not a Seifert surface, in the exterior $E(K)$ with boundary slope an even integer.
\end{conjecture}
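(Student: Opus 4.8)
The plan is to realise the required surface as the boundary of a regular neighbourhood of an incompressible non-orientable spanning surface, since such a boundary is automatically a connected orientable surface with two boundary components of even integer slope. First I would reduce to a core case. If $K = K_1 \# K_2$ is composite, then by the Remark above on composite knots it suffices to treat a single prime factor, so we may assume $K$ is prime; and the degree-one map theorem of this paper, together with a reduction in the style of Corollary \ref{simple} (a proper degree-one map of knot exteriors preserves boundary slopes and carries non-orientable spanning surfaces back to non-orientable spanning surfaces), lets us further pass to simple knots. It then suffices to prove: every prime simple non-torus knot $K$ bounds a non-orientable spanning surface $F$, with $\partial F = K$, such that $F \cap E(K)$ is geometrically incompressible and boundary incompressible.

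Granting such an $F$, put $\hat{F} = \partial N(F) \cap E(K)$, where $N(F)$ is a regular neighbourhood of $F$ in $S^3$. Since $F$ is non-orientable and spans $K$, $N(F)$ is a twisted $I$-bundle over $F$, so $\hat{F}$ is its connected orientable double cover; and because $\partial F = K$ bounds $F$, it is nullhomologous mod $2$ in $F$, hence orientation-preserving there, so $\partial\hat{F}$ consists of exactly two parallel curves on $\partial E(K)$, whose common slope is an even integer (the standard fact that boundaries of regular neighbourhoods of non-orientable spanning surfaces have even boundary slope). Having two boundary components, $\hat{F}$ is not a Seifert surface. That $\hat{F}$ inherits incompressibility and boundary incompressibility from $F$ is the standard surgery argument — a compressing or boundary-compressing disc for $\hat{F}$ is pushed off $N(F)$ and then either descends to one for $F$ through the $I$-bundle structure or is eliminated using the incompressibility of $F$ on the complementary side — exactly the implication used in the proofs that the knot classes listed in the Introduction satisfy the Neuwirth Conjecture. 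Hence $\hat{F}$ is the surface demanded by Conjecture \ref{strong even}.

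The remaining task — producing the non-orientable spanning surface $F$ for an arbitrary prime simple non-torus knot — is precisely the content of the (still open) Weakly Strong Neuwirth Conjecture, and it is the main obstacle. Unconditionally one gets only a partial statement: for each class already known to satisfy the Strong Neuwirth Conjecture — alternating and generalized alternating knots, $2$-bridge knots, non-positive $+$-adequate knots, crosscap number $2$ knots, and the Montesinos knots and generalized arborescently alternating knots treated in this paper — the spanning surfaces constructed there (checkerboard, pretzel, or Murasugi-summed surfaces) are algebraically incompressible and boundary incompressible, so the construction above applies, and combined with the composite-knot and degree-one map reductions this proves Conjecture \ref{strong even} for all such knots, in particular for every knot of at most 11 crossings apart from the two exceptional examples. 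For a general prime knot no incompressible non-orientable spanning surface is known, and the only alternative presently available — incompressible surfaces arising from ideal points of the $\mathrm{SL}_2(\mathbf{C})$ character variety in the manner of Culler--Shalen — comes with no control on the parity of the boundary slope. An unconditional proof therefore seems to require either a resolution of the Weakly Strong Neuwirth Conjecture or an entirely new construction of even-slope incompressible non-Seifert surfaces.
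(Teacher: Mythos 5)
The statement you were asked to prove is a \emph{conjecture}, and the paper does not prove it; it states it as an open problem that frames the entire investigation. You correctly recognize this, and your submission is rightly a survey of reductions and partial results rather than a complete argument. In that regard your discussion tracks the paper's own: the reduction to simple knots via degree-one maps and connected sums (Corollary \ref{simple2}), and the passage from a suitable non-orientable spanning surface $F$ to the orientable, two-boundary-component, even-integer-slope surface $\partial N(F)\cap E(K)$, are exactly the implication the paper records in the paragraph preceding Conjecture \ref{even}, namely that Conjecture \ref{strong} implies Conjecture \ref{strong even}.

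There is, however, one real gap in your conditional reduction. You assert it would suffice to find an $F$ that is \emph{geometrically} incompressible and boundary incompressible, i.e.\ you reduce the Strong Even Boundary Slope Conjecture to the Weakly Strong Neuwirth Conjecture. The paper deliberately states the implication using \emph{algebraic} incompressibility and boundary incompressibility, and the difference matters. Your claim that a compressing or boundary-compressing disc for $\hat{F}=\partial N(F)\cap E(K)$ "descends to one for $F$" does not follow from geometric incompressibility of $F$: a compressing disk on the side of $\hat F$ away from $F$ has boundary a loop on $\hat F$, and whether its image under the $I$-bundle projection is essential in $F$ is a $\pi_1$-injectivity question, not a question about embedded compressing disks for $F$. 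This is precisely why the paper introduces pre-essential surfaces and proves in Lemma \ref{pre-essential} that algebraic boundary incompressibility is the criterion guaranteeing that $\partial N(S)$ compresses to an essential surface, while Lemma \ref{geometrically incompressible} shows that geometric incompressibility can then be arranged \emph{in addition to}, not as a substitute for, the algebraic condition. So the remaining obstacle should be phrased as the Strong Neuwirth Conjecture, not its weakly strong variant; the latter is not shown, here or elsewhere, to imply even boundary slopes.
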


We will show in Corollary \ref{simple2}, that if a knot $K$ satisfies the (strong) even boundary slope conjecture then so does any satellite knot obtained from $K$ and any composite knot obtained by summing a knot with $K$. This implies that to prove the (strong) even boundary slope conjecture, it suffices to consider only simple knots. 


\begin{remark}
Miyazaki (\cite{M}) showed that for any integer $m \ge 0$, there is a hyperbolic knot which has $m+1$ accidental surfaces with accidental slopes $0, 1, \ldots ,m$.
Tsutsumi (\cite{T2}) showed that for any finite set of even integers $\{a_1,\ldots,a_n\}$ and for any closed connected 3-manifold $M$, there exists an excellent knot in $M$ which bounds excellent non-orientable spanning surfaces $F_1,\ldots,F_n$ such that the boundary slope of $F_i$ is $a_i$.
These two constructions show that there exists a hyperbolic knot with finitely many Neuwirth surfaces at finitely many integer boundary slopes.
\end{remark}

\section{Known definitions and results}

\subsection{Geometrically incompressible and algebraically incompressible surfaces}
We review the definition of essential surfaces in both the geometric and algebraic senses.

Let $M$ be an orientable compact 3-manifold, $F$ a compact surface properly embedded in $M$, possibly with boundary, except for a 2-sphere or disk, and let $i$ denote the inclusion map $F\to M$.
We say that $F$ is {\em algebraically incompressible} if the induced map $i_*:\pi_1(F)\to \pi_1(M)$ is injective, and that $F$ is {\em algebraically boundary incompressible} if the induced map $i_*:\pi_1(F,\partial F)\to \pi_1(M,\partial M)$ is injective for every choice of two base points in $\partial F$.

A disk $D$ embedded in $M$ is a {\em compressing disk} for $F$ if $D\cap F=\partial D$ and $\partial D$ is an essential loop in $F$.
A disk $D$ embedded in $M$ is a {\em boundary compressing disk} for $F$ if $D\cap F\subset \partial D$ is an essential arc in $F$ and $D\cap \partial M=\partial D-\rm{int}(D\cap F)$.
We say that $F$ is {\em geometrically incompressible} (resp. {\em geometrically boundary incompressible}) if there exists no compressing disk (resp. boundary compressing disk) for $F$.

\subsection{Murasugi sum and knot minors}

Let $F$ be a spanning surface for a knot or link $K$.
Suppose that there exists a 2-sphere $S$ decomposing $S^3$ into two 3-balls $B_1,B_2$ such that $S$ intersects $K$ transversely and $F\cap S$ consists of a disk.
Put $F_i=F\cap B_i$ for $i=1,2$.
Then we say that $F$ has a {\em Murasugi decomposition} into $F_1$ and $F_2$ and we denote by $F=F_1*F_2$.
Conversely, we say that $F$ is obtained from $F_1$ and $F_2$ by a {\em Murasugi sum} along a disk $F\cap S$.
We say that a Murasugi sum (or Murasugi decomposition) is {\em plumbing} (or {\em deplumbing}) if $S$ intersects $K$ in $4$ points.

The Murasugi sum is a natural geometric operation.
In fact, Gabai proved that geometrically incompressibility for Seifert surfaces is preserved under Murasugi sums, and the first author showed that algebraically incompressibility for spanning surfaces is also preserved under Murasugi sums.

\begin{theorem}[{\cite[Theorem 1]{G}}, {\cite[Lemma 3.4]{O2}}]\label{Murasugi sum}
If $F_1$ and $F_2$ are algebraically incompressible and boundary incompressible, then $F=F_1*F_2$ is also algebraically incompressible and boundary incompressible.
\end{theorem}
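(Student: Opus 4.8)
The plan is to realize the Murasugi sphere $S$ as a splitting surface of the knot exterior $E(K)$, identify the resulting van Kampen decompositions of $\pi_1(E(K))$ and of $\pi_1(F)$, and show that the only way an element of $\pi_1(F)$ could die in $\pi_1(E(K))$ would force one of $F_1,F_2$ to fail to be incompressible or boundary incompressible.

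\emph{Set-up.} Let $B_1,B_2$ be the two balls bounded by $S$, let $D=F\cap S$ be the disk along which the sum is performed, put $F_i=F\cap B_i$, and let $P=S\cap E(K)$ be the planar separating surface, which splits $E(K)$ into $M_i=B_i\cap E(K)$. Then $D_0:=D\cap E(K)$ is a disk lying in the interior of $F\cap E(K)$ and contained in $P$, one has $F\cap E(K)=(F_1\cap E(K))\cup_{D_0}(F_2\cap E(K))$, and — up to a routine comparison with the exterior of the summand link — the hypothesis says that each $F_i$, viewed in $M_i$, is algebraically incompressible and boundary incompressible. A preliminary step is to arrange that $P$ is incompressible and boundary incompressible in $E(K)$; once this holds, van Kampen's theorem expresses $\pi_1(E(K))$ as a genuine amalgamated free product $\pi_1(M_1)*_{\pi_1(P)}\pi_1(M_2)$, and, $D_0$ being simply connected, $\pi_1(F)\cong\pi_1(F_1)*\pi_1(F_2)$, where by abuse of notation $\pi_1(F_i)$ means $\pi_1(F_i\cap E(K))$ and injects into $\pi_1(M_i)$.

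\emph{Incompressibility.} Suppose $1\ne w\in\ker(\pi_1(F)\to\pi_1(E(K)))$ and write it in reduced form $w=c_1c_2\cdots c_k$ with the $c_j$ alternately in $\pi_1(F_1)\setminus\{1\}$ and $\pi_1(F_2)\setminus\{1\}$. If $k=1$, then $w\in\pi_1(F_i)$ injects into $\pi_1(M_i)$ and thence into $\pi_1(E(K))$, so $w=1$, a contradiction. If $k\ge2$, the image of $w$ in the amalgamated product $\pi_1(M_1)*_{\pi_1(P)}\pi_1(M_2)$ is a word alternating between $\pi_1(M_1)$ and $\pi_1(M_2)$; by the normal form theorem it can only be trivial if some $c_j\in\pi_1(F_i)$ is carried by $\pi_1(F_i)\hookrightarrow\pi_1(M_i)$ into a conjugate of the amalgamating subgroup $\pi_1(P)$. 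Geometrically this says that some essential loop of $F_i$ is freely homotopic in $M_i$ into $P$. I would rule this out by the standard innermost-disk/outermost-arc analysis of such a homotopy: using the irreducibility of $E(K)$ and the crucial fact that $F_i$ meets $P$ only in the disk $D_0$, the homotopy can be simplified until the loop is seen to be homotopic in $F_i$ into $D_0$, hence inessential — a contradiction. The point at which the full strength of incompressibility and boundary incompressibility of $F_i$ is used, namely eliminating the intermediate cases of this simplification (compressions or boundary compressions of $F_i$, together with the preliminary task of making $P$ itself incompressible by isotoping $S$ compatibly with $F$), is where I expect the main work to lie.

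\emph{Boundary incompressibility and the orientable case.} Boundary incompressibility is obtained by the same argument applied to the fundamental groupoids of the pairs $(F,\partial F)$ and $(E(K),\partial E(K))$, with reduced words of arcs and with ``essential loop'' replaced throughout by ``essential arc''. For the classical situation of an orientable Seifert surface there is an alternative via sutured manifold theory, which one may prefer to invoke: the sutured manifold complementary to a Murasugi sum is the Murasugi sum of the complementary sutured manifolds, an operation that preserves tautness by Gabai's theorem, and tautness is equivalent to geometric incompressibility and boundary incompressibility of the fibre surface. I would nevertheless present the $\pi_1$-argument above as the proof in general, since it remains valid when $F$ is non-orientable, where sutured manifold methods do not apply.
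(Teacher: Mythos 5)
The paper does not give its own proof of this theorem; it is stated as a citation, to Gabai \cite{G} for geometric incompressibility of Seifert surfaces via sutured manifold theory, and to Ozawa \cite{O2} for algebraic incompressibility including non-orientable spanning surfaces. Your sutured-manifold remark reproduces Gabai's route for the orientable case. The $\pi_1$ argument, which you offer as the general-case proof, has a genuine gap.

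The gap is the preliminary claim that $P = S \cap E(K)$ can be \emph{arranged} to be incompressible and boundary incompressible. This cannot be imposed: $P$ is forced to be the restriction to $E(K)$ of a $2$-sphere $S$ meeting $F$ in exactly one disk, and any compression of $P$ generically destroys that Murasugi-sum structure. When $P$ is compressible, $\pi_1(P)\to\pi_1(M_i)$ fails to inject, so van Kampen yields only a pushout rather than an amalgamated free product, and the normal form theorem you invoke has no purchase. Even granting the amalgamated structure, the normal form theorem places some $c_j$ in $\pi_1(P)$ itself (not merely a conjugate, as you write); this gives an essential loop of $F_i$ freely homotopic in $M_i$ into $P$, and your assertion that such a loop must be homotopic within $F_i$ into $D_0$ (hence inessential) is precisely the step where incompressibility and boundary incompressibility of $F_i$ must do all the work — the innermost-disk sketch does not show how the homotopy annulus localizes to $D_0$ merely because $F_i\cap P=D_0$. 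Finally, the ``routine comparison'' identifying essentiality of $F_i$ in $M_i$ with essentiality in $E(K_i)$ is not routine, since part of $K_i=\partial F_i$ lies in $\partial D\subset S=\partial M_i$, so $M_i$ is not the exterior of $K_i$ and the translation needs its own lemma. The cited proofs sidestep all of this by putting a singular compressing or boundary-compressing disk for $F$ (or for $\partial N(F)$) into general position with respect to the Murasugi sphere $S$ directly and running innermost-circle/outermost-arc reductions there, rather than first attempting to make $P$ essential.
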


We say that a knot or link $K$ has a {\em minor} $K_i$ if there exists algebraically incompressible and boundary incompressible spanning surfaces $F_i$ for $K_i$ such that $F=F_1*F_2$.
In the proof of Theorem \ref{Montesinos}, this concept plays a central role and it turns out that any Montesinos knot except for torus knots has a pretzel knot or link minor.

\subsection{State surfaces}

In the following, we review the definitions and results on state surfaces, which are introduced in \cite{O2}.

Let $K$ be a knot or link in the 3-sphere $S^3$ and $D$ a connected diagram of $K$ on the 2-sphere $S^2$ which
 separates $S^3$ into two 3-balls, say $B_+, B_-$.
Let $\mathcal{C}=\{c_1,\ldots,c_n\}$ be the set of crossings of $D$.
A map $\sigma : \mathcal{C}\to \{+,-\}$ is called a {\em state} for $D$.
For each crossing $c_i\in \mathcal{C}$, we take a $+$-smoothing or $-$-smoothing according to $\sigma(c_i)=+$ or $-$.
See Figure \ref{resolution}.
Then, we have a collection of loops $l_1,\ldots, l_m$ on $S^2$ and call these {\em state loops}.
Let $\mathcal{L}_{\sigma} = \{l_1,\ldots, l_m\}$ be the set of state loops.

\begin{figure}[htbp]
	\begin{center}
		\includegraphics[trim=0mm 0mm 0mm 0mm, width=.6\linewidth]{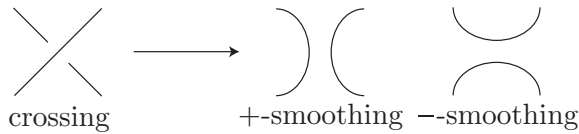}
	\end{center}
	\caption{Two smoothings of a crossing}
	\label{resolution}
\end{figure}

Each state loop $l_i$ bounds a unique disk $d_i$ in $B_-$, and we may assume that these disks are mutually disjoint.
For each crossing $c_j$ and state loops $l_i,l_k$ whose subarcs replaced $c_j$ by a $\sigma(c_j)$-smoothing, we attach a half twisted band $b_j$ to $d_i,d_k$ so that $c_j$ is recovered .
See Figure \ref{recover} for $\sigma(c_j)=+$.
In this way, we obtain a spanning surface which consists of disks $d_1,\ldots,d_m$ and half twisted bands $b_1,\ldots,b_n$ and call this a {\em $\sigma$-state surface} $F_{\sigma}$.

\begin{remark}\label{option}
Although we chose a disk $d_i$ in $B_-$ for each state loop $l_i$, we note that there are two options to choose a disk in $B_+$ or $B_-$ for each state loop which is not innermost in the 2-sphere $S^2$.
Therefore, in general, there are many state surfaces for a given state.
\end{remark}

\begin{figure}[htbp]
	\begin{center}
		\includegraphics[trim=0mm 0mm 0mm 0mm, width=.6\linewidth]{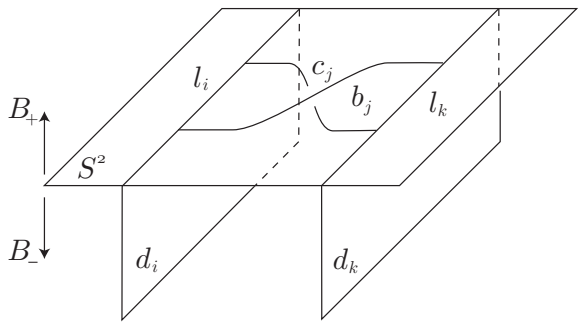}
	\end{center}
	\caption{Recovering a crossing by a half twisted band}
	\label{recover}
\end{figure}

We construct a graph $G_{\sigma}$ with signs on edges from $F_{\sigma}$ by regarding a disk $d_i$ as a vertex $v_i$ and a band $b_j$ as an edge $e_j$ which has the same sign $\sigma(c_j)$.
We call the graph $G_{\sigma}$ a {\em $\sigma$-state graph}.
In general, a graph is called a {\em block} if it is connected and has no cut vertex.
It is known that any graph has a unique decomposition into maximal blocks.
Following \cite{LT} and \cite{C1}, we say that a diagram $D$ is {\em $\sigma$-adequate} if $G_{\sigma}$ has no loops,
 and that $D$ is {\em $\sigma$-homogeneous} if in each block of $G_{\sigma}$, all edges have the same sign.

\begin{example}
Let $D$ be a diagram of the figure eight knot which has $4$ crossings $c_1,c_2,c_3,c_4$ as in Figure \ref{trefoil}.
To make a $\sigma$-state surface, let $\sigma(c_1)=\sigma(c_2)=-$ and $\sigma(c_3)=\sigma(c_4)=+$ for example.
Since the $\sigma$-state graph $G_{\sigma}$ has no loops and all edges in each block have the same sign as in Figure \ref{figure-8}, $D$ is $\sigma$-adequate and $\sigma$-homogeneous.
Moreover, the block decomposition of $G_{\sigma}$ corresponds to a Murasugi decomposition of $F_{\sigma}$. See Figure \ref{decomposition_fig}.
\end{example}

\begin{figure}[htbp]
	\begin{center}
		\includegraphics[trim=0mm 0mm 0mm 0mm, width=.9\linewidth]{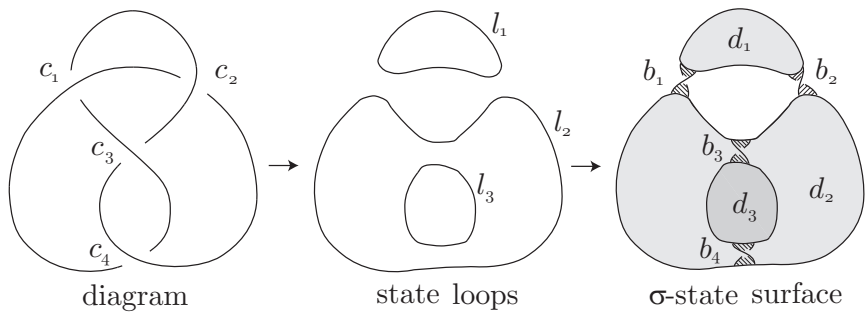}
	\end{center}
	\caption{An example of making a $\sigma$-state surface}
	\label{trefoil}
\end{figure}

\begin{figure}[htbp]
	\begin{center}
		\includegraphics[trim=0mm 0mm 0mm 0mm, width=.4\linewidth]{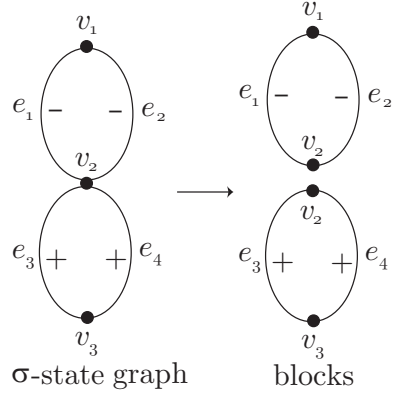}
	\end{center}
	\caption{The corresponding $\sigma$-state graph and its block decomposition}
	\label{figure-8}
\end{figure}

\begin{figure}[htbp]
	\begin{center}
		\includegraphics[trim=0mm 0mm 0mm 0mm, width=.6\linewidth]{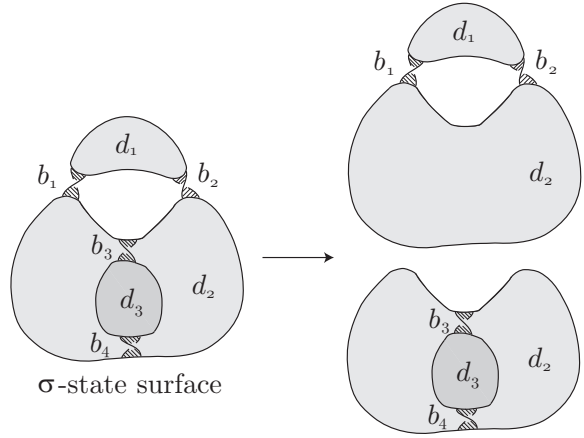}
	\end{center}
	\caption{The corresponding Murasugi decomposition}
	\label{decomposition_fig}
\end{figure}

\begin{example}
A diagram $D$ with an orientation is said to be {\em positive} if all crossings have a positive sign.
For any positive diagram $D$, there exists a state $\sigma$ such that $D$ is $\sigma$-adequate and $\sigma$-homogeneous.
Indeed, we can take $\sigma$ so that $\sigma(c_j)=+$ for all $c_j$, namely, the {\em positive state} $\sigma_+$.
Also we can take $\sigma$ so that it yields a canonical Seifert surface $F_{\sigma}$, namely, the {\em Seifert state} $\vec{\sigma}$.
Note that these states $\sigma_+$ and $\vec{\sigma}$ coincide only on a positive diagram.
\end{example}

\begin{example}
We say that a diagram $D$ is {\em $+$-adequate} (or {\em $-$-adequate}) \cite{LT} if $D$ is $\sigma$-adequate for the positive state $\sigma_+$ (or the negative state $\sigma_-$).
Note that $D$ is automatically $\sigma_{\pm}$-homogeneous since $\sigma_{\pm}(c_j)=\pm$ for all $j$.
Furthermore, we say that a diagram $D$ is {\em adequate} \cite{T} if $D$ is both $+$-adequate and $-$-adequate.
\end{example}

The Hasse diagram of various classes of knots and links is illustrated in Figure \ref{hasse}.
{\em Algebraically alternating} knots and links are defined in \cite{O3} so that they include both alternating and algebraic knots and links, and some results on closed incompressible surfaces are obtained.

\begin{figure}[htbp]
	\begin{center}
		\includegraphics[trim=0mm 0mm 0mm 0mm, width=.7\linewidth]{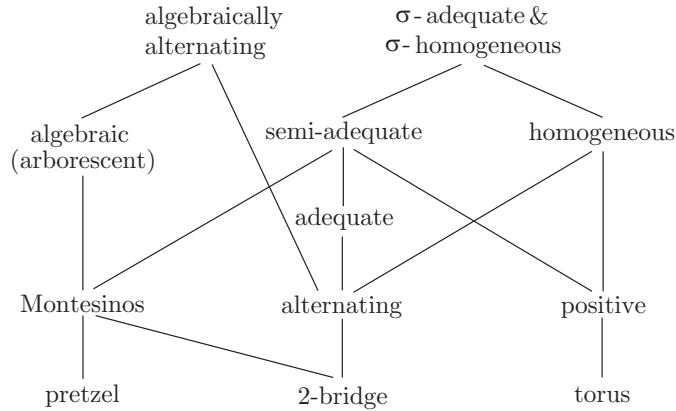}
	\end{center}
	\caption{The Hasse diagram for the set of knot diagrams partially ordered by inclusion}
	\label{hasse}
\end{figure}

\begin{theorem}[\cite{O2}]\label{state}
If a diagram is both $\sigma$-adequate and $\sigma$-homogeneous for some state $\sigma$, then the $\sigma$-state surface is algebraically incompressible and boundary incompressible.
\end{theorem}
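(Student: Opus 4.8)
The plan is to reduce, via the Murasugi-sum structure recorded above, to the case of a single block, and then to dispose of that case by passing to an orientable double of the state surface and running an innermost-disk argument in which $\sigma$-adequacy does all the work. \emph{Step 1 (block reduction).} First I would use the fact, visible already in the figure-eight example above, that the decomposition of $G_\sigma$ into its maximal blocks induces a Murasugi decomposition $F_\sigma = F_{\sigma_1} \ast \cdots \ast F_{\sigma_k}$, where $\sigma_i$ is the restriction of $\sigma$ to the subdiagram carrying the $i$-th block; a cut vertex $v_i$ of $G_\sigma$ corresponds to a disk $d_i$ of $F_\sigma$ cut by a Murasugi sphere. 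Each $F_{\sigma_i}$ is again a state surface whose state graph is a single block, and Murasugi sum preserves algebraic incompressibility and boundary incompressibility by Theorem \ref{Murasugi sum}, so it suffices to treat the case that $G_\sigma$ is a single block. Here $\sigma$-homogeneity is used in full: all edges of a block carry a common sign, and passing to the mirror diagram if necessary we may assume it is $+$. What remains is the local-looking assertion that a $+$-adequate diagram with connected single-block state graph has an algebraically incompressible and boundary incompressible state surface (a statement close in spirit to the fact that adequate state surfaces are essential, but in the stronger algebraic category and for general spanning surfaces).

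\emph{Step 2 (passing to the orientable double).} Since $F_\sigma$ may be one-sided, algebraic incompressibility is genuinely stronger than geometric incompressibility, and I would sidestep this by working with $\widehat F := \partial N(F_\sigma) \cap E(K)$, the horizontal boundary of the $I$-bundle $N(F_\sigma)$; this is an orientable, hence two-sided, surface in $E(K)$. The inclusion induces an injection $\pi_1(\widehat F) \hookrightarrow \pi_1(N(F_\sigma)) = \pi_1(F_\sigma)$ whose image contains the square of every element of $\pi_1(F_\sigma)$, and because $\pi_1(F_\sigma)$ is free, hence torsion-free, a short diagram chase shows that $F_\sigma$ is algebraically incompressible (resp.\ algebraically boundary incompressible) in $E(K)$ precisely when $\widehat F$ is. As $\widehat F$ is two-sided, the loop theorem and its relative version let me replace these algebraic conditions by the absence of \emph{embedded} compressing and boundary-compressing disks for $\widehat F$.

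\emph{Step 3 (killing compressions with adequacy).} Set $W := E(K) \setminus \mathrm{int}\,N(F_\sigma)$. Because all bands of $F_\sigma$ are half-twisted the same way, $N(F_\sigma)$ is an $I$-bundle dual to $G_\sigma$, with one $0$-handle $N(d_i)$ per vertex and one $1$-handle $N(b_j)$ per edge; cutting $W$ open along the co-cores of these $1$-handles should yield $3$-balls, namely the ball $B_+$ together with the regions cut out of $B_-$ by the disks $d_i$. Given an embedded compressing disk $D$ for $\widehat F$ in $W$, I would put $D$ in general position with respect to the co-cores and take an innermost arc or circle of intersection: such a piece lies in one of these $3$-balls with boundary on the image of a single handle $N(b_j)$, so if it cannot be removed it exhibits $b_j$ with both ends on one disk $d_i$, i.e.\ a loop in $G_\sigma$, contradicting $+$-adequacy. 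Hence $\widehat F$ is incompressible in $W$; it is also incompressible in $N(F_\sigma)$ (a covering of $F_\sigma$, or two parallel copies of it), so by two-sidedness it is incompressible in $E(K)$. A boundary-compressing disk is excluded by the same innermost-piece analysis, together with the observation, using connectedness of the diagram, that the meridional annuli of $\partial E(K)$ appearing in $\partial W$ support no essential spanning arc disjoint from $\widehat F$.

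The step I expect to be the main obstacle is Step 3: one must pin down the handle decomposition of $W$ for a surface built from half-twisted bands and verify carefully that cutting along the co-cores really returns unknotted balls carrying the expected disk pattern, so that an innermost intersection arc is genuinely forced to produce a loop in $G_\sigma$ rather than slipping through a degenerate (nugatory-crossing-like) configuration. A secondary technical point is the boundary bookkeeping along $\partial E(K)$, where the relevant input is the connectedness of the diagram rather than its adequacy, and the two should not be conflated.
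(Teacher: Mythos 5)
Your Step 1 is exactly the paper's reduction: $\sigma$-homogeneity plus the block decomposition of $G_\sigma$ gives a Murasugi decomposition $F_\sigma = F_1 * \cdots * F_r$, and Theorem~\ref{Murasugi sum} reduces everything to the single-block case. Where you diverge is what you do with a single block. The paper observes that a state surface whose state graph is a single block with all edges of the same sign, and which is $\sigma$-adequate, \emph{is} a checkerboard surface of a reduced alternating diagram on $S^2$, and then simply invokes the Aumann/Menasco--Thistlethwaite/Ozawa result (the first theorem labelled Theorem~\ref{checkerboard} above) that such checkerboard surfaces are algebraically incompressible and boundary incompressible. In other words, ``has an alternating knot or link minor'' is the entire content of the rest of the proof. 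You instead set out to reprove that classical theorem from scratch in Steps 2 and 3, which is a far harder and unnecessary route given that the needed result is already on the page.

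Your Step 2 (pass to $\widehat F = \partial N(F_\sigma) \cap E(K)$, use that $\pi_1(\widehat F)$ injects into the free group $\pi_1(F_\sigma)$ with index-two image, and then apply the loop theorem to the two-sided surface $\widehat F$) is correct and is a standard device, but note the paper does not need it because Theorem~\ref{checkerboard} is already stated in the algebraic category. Step 3 is where the real gap is, and you flag it yourself: cutting $W = E(K)\setminus \mathrm{int}\,N(F_\sigma)$ ``along the co-cores of the $1$-handles'' is not well posed, since those co-cores lie inside $N(F_\sigma)$, not in $W$. The disks one actually wants are the complementary regions of $S^2$ minus the state circles (the $D_i$ of the pretzel proof of Theorem~\ref{pretzel}), and establishing that these cut $W$ into balls with the correct edge pattern, and that an innermost intersection piece forces a loop of $G_\sigma$, is precisely the work done in the alternating checkerboard theorem. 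So as written, Step 3 does not complete the proof; it either needs to be replaced by a citation of Theorem~\ref{checkerboard} (after noting that $\sigma$-adequacy of the block guarantees the alternating diagram is reduced), or carried out with the Menasco--Thistlethwaite style crossing-ball/region analysis rather than the $1$-handle co-core picture.
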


\begin{corollary}[\cite{O2}]
If a diagram is $\sigma$-adequate and $\sigma$-homogeneous for a state $\sigma$ except for the Seifert state $\vec{\sigma}$, then the knot satisfies the strong Neuwirth conjecture.
In particular, adequate knots satisfy the strong Neuwirth conjecture.
\end{corollary}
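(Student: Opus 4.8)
The plan is to deduce this corollary from Theorem \ref{state} together with one extra, purely combinatorial observation: a state surface $F_{\sigma}$ of a knot diagram $D$ is non-orientable unless $\sigma$ is the Seifert state $\vec{\sigma}$. Granting this, the first assertion is immediate. If $D$ is $\sigma$-adequate and $\sigma$-homogeneous with $\sigma\neq\vec{\sigma}$, then Theorem \ref{state} shows that $F_{\sigma}$ is algebraically incompressible and boundary incompressible, while the observation shows that $F_{\sigma}$ is a non-orientable spanning surface for $K$; hence $F_{\sigma}$ is exactly the surface demanded in Conjecture \ref{strong}, so $K$ satisfies the Strong Neuwirth Conjecture.

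The one step with genuine content is the orientability observation, which I would prove as follows. Suppose $F_{\sigma}$ is orientable and fix an orientation of it; since $\partial F_{\sigma}=K$ has a single component, $F_{\sigma}$ is then a Seifert surface and the orientation descends to an orientation of $K$. Fix a crossing $c_j$ of $D$ and the half-twisted band $b_j$ used to recover it in the construction of $F_{\sigma}$. The two long sides of $b_j$ are subarcs of $\partial F_{\sigma}=K$, and since $b_j$ is an embedded piece of the oriented surface $F_{\sigma}$, these two subarcs inherit opposite directions from the boundary of $b_j$; comparing with the two local pictures in Figure \ref{resolution}, the band at $c_j$ is attachable compatibly with a surface orientation precisely when the smoothing there is the oriented one, i.e.\ $\sigma(c_j)=\vec{\sigma}(c_j)$. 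Since $c_j$ was arbitrary this gives $\sigma=\vec{\sigma}$; contrapositively, $\sigma\neq\vec{\sigma}$ forces $F_{\sigma}$ to be non-orientable. Here $\sigma$-adequacy is only used to rule out a band joining a disk to itself, and the ambiguity of Remark \ref{option} in the choice of the disks $d_i$ is irrelevant, since it changes neither $\partial F_{\sigma}$ nor the local picture at any band.

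For the ``in particular'' clause, I would note that an adequate diagram $D$ is by definition both $+$-adequate and $-$-adequate, and that $\sigma_{+}$ and $\sigma_{-}$ are automatically $\sigma_{\pm}$-homogeneous; so it suffices to exhibit one of $\sigma_{+},\sigma_{-}$ that is not the Seifert state and then apply the first assertion. If $D$ has at least one crossing, then $\sigma_{+}\neq\sigma_{-}$, because they disagree at every crossing; and $\vec{\sigma}$ agrees with $\sigma_{+}$ only when $D$ is a positive diagram and with $\sigma_{-}$ only when $D$ is a negative diagram, while $D$ cannot be both. Hence at least one of $\sigma_{+},\sigma_{-}$ differs from $\vec{\sigma}$, as required. (A crossingless diagram presents the unknot, a torus knot, to which the conjecture does not apply.)

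I expect the only real obstacle to be the orientability observation of the second paragraph: making rigorous the slogan that a non-Seifert smoothing at some crossing contributes a half-twist obstructing a coherent orientation of $F_{\sigma}$, and checking that this conclusion is unaffected by the non-uniqueness of state surfaces noted in Remark \ref{option}. Once that is in hand, the corollary is a formal consequence of Theorem \ref{state}.
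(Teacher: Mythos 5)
The proposal is correct and essentially reconstructs the argument the paper attributes to \cite{O2}: Theorem~\ref{state} gives that $F_\sigma$ is algebraically incompressible and boundary incompressible, and the remaining ingredient is the orientability criterion $F_\sigma$ orientable $\iff \sigma = \vec{\sigma}$, which you identify as the one step with content. Your band-side argument is right; a perhaps more transparent phrasing, closer to the language the paper itself uses, runs via the state loops rather than the bands: if $F_\sigma$ is oriented, then each $\partial d_i = l_i$ inherits an orientation that must coincide with that of $K = \partial F_\sigma$ on every arc of $l_i$ lying between crossings (since those arcs are common boundary of $d_i$ and $F_\sigma$); at a crossing $c_j$ the smoothing arc of $l_i$ connects two corners of the crossing, and coherence of the orientation around $l_i$ forces one corner to be an incoming end of $K$ and the other outgoing, which is precisely the Seifert condition $\sigma(c_j)=\vec{\sigma}(c_j)$. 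This is exactly the criterion the paper uses (for $\sigma=\sigma_+$) when it remarks in the proof of Theorem~\ref{Montesinos} that ``$K$ is positive if and only if $F_{\sigma_+}$ is orientable.'' Your handling of the Remark~\ref{option} ambiguity (orientability of $F_\sigma$ is an intrinsic property independent of the disk choices) and the ``in particular'' reduction for adequate diagrams (one of $\sigma_+,\sigma_-$ must differ from $\vec{\sigma}$ once there is a crossing) are both fine.
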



Theorem \ref{state} is obtained from Theorem \ref{Murasugi sum} and the next Theorem \ref{checkerboard}.
Indeed, a knot satisfying the condition of Theorem \ref{state} has an alternating knot or link minor.

\begin{theorem}[{\cite[Theorem 9.8]{A}}, {\cite[Proposition 2.3]{MT}}, {\cite[Theorem 2]{O}}]\label{checkerboard}
If a diagram is reduced and alternating, then both of the checkerboard surfaces are algebraically incompressible and boundary incompressible.
\end{theorem}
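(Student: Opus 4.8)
The plan is to prove this directly, by Menasco's crossing-ball technique. Fix a connected reduced alternating diagram $D$ of a link $L$ on the projection $2$-sphere $P\subset S^3$, with complementary $3$-balls $\overline{B_+}$ and $\overline{B_-}$; since $D$ is connected, the regions of $P\setminus D$ are disks. By interchanging the two colours (equivalently $B_+\leftrightarrow B_-$) it suffices to treat the black checkerboard surface $F$. As $F$ is in general one-sided, I will work with the two-sided properly embedded surface $\widehat F=\partial N(F)\cap E(L)$, which double-covers $F$. By the loop theorem and its relative analogue applied to $\widehat F$, together with the fact that $\pi_1(F)$ is free (so that a subgroup in which every element has order dividing $2$ is trivial), $F$ is algebraically incompressible and boundary incompressible in $E(L)$ as soon as $\widehat F$ is incompressible and boundary incompressible; this standard reduction is the only point at which the one-sidedness of $F$ enters. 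Thus it remains to show that $\widehat F$ admits no compressing disk and no boundary-compressing disk.

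For the bubble model, following \cite{A} and \cite{MT}, replace a neighbourhood of each crossing $c_i$ by a crossing ball $V_i$ meeting $P$ in a bigon and meeting $L$ in its over-strand, which lies in $\partial V_i\cap\overline{B_+}$, and its under-strand, which lies in $\partial V_i\cap\overline{B_-}$; away from the $V_i$ both the link and the diagram lie on $P$. Let $\Sigma$ be the $2$-sphere obtained from $P$ by deleting the interiors of the bigons and gluing in the bubble spheres $\partial V_i$; it still separates $S^3$ into $\overline{B_+}$ and $\overline{B_-}$. We may take $F$ to lie on $\Sigma$, so that $\widehat F$ lies in a bicollar of $\Sigma$; the alternating hypothesis is exactly what makes the black band and the white band of $F$ interleave in the standard clasp position on every bubble $\partial V_i$.

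Now let $E$ be a compressing or boundary-compressing disk for $\widehat F$, isotoped so that $E$ meets $\Sigma$ transversely with $|E\cap\Sigma|$ minimal, and suppose this number is positive. The set $E\cap\Sigma$ consists of circles and, in the boundary-compression case, arcs with endpoints on $\partial N(L)\cap\Sigma$. An innermost circle $\gamma$ cuts off a subdisk $E_0\subseteq E$ with $E_0\cap\Sigma=\gamma$, so $E_0$ lies in $\overline{B_+}$ or in $\overline{B_-}$, and $\gamma$ lies either in the planar part of $\Sigma$ or on some $\partial V_i$. Using irreducibility of $S^3$, connectedness of $D$, and --- when $\gamma$ lies on a bubble --- the observation that the alternating clasp picture on $\partial V_i$ carries no essential circle bounding a strand-free disk to one side, one shows (inductively on the number of bubbles enclosed, exactly as in Menasco's argument) that $\gamma$ bounds a bubble-free disk in $\Sigma$; then $E$ is isotoped across the resulting ball, lowering $|E\cap\Sigma|$. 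Outermost arcs of $E\cap\Sigma$ are removed in the same way. Hence, after isotopy, $E$ is disjoint from $\Sigma$ and lies in a single ball, say $\overline{B_+}$. Projecting back onto $P$, the trace of $\partial E$ on $\widehat F$ descends to a loop $\alpha$ carried by $F$ (or, in the boundary-compression case, to an essential arc of $F$ whose ends lie on $\partial F=L$), and $\alpha$ is essential in $F$ because $\pi_1(\widehat F)\to\pi_1(F)$ is injective; moreover $\alpha$ bounds a disk in the ball $\overline{B_+}$ meeting $D$ in a controlled configuration. A combinatorial argument using the connectedness and reducedness of $D$ --- nugatory crossings being precisely the obstruction --- now shows that $\alpha$ must in fact bound a disk in $F$ itself (resp. that the arc is boundary-parallel in $F$), contradicting the choice of $E$. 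Therefore $\widehat F$, and hence $F$, is algebraically incompressible and boundary incompressible; the white surface is symmetric.

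\textbf{Main obstacle.} The heart of the proof is the middle step --- isotoping a compressing or boundary-compressing disk off every crossing ball --- which is the technical core of Menasco's bubble method; this is precisely where the alternating hypothesis is indispensable, through the interleaved clasp picture on each $\partial V_i$ that rules out essential intersection circles and arcs on the bubbles. One must take care that the inductive innermost-disk argument on $\Sigma$ correctly accounts for the strands of $L$ and the bands of $F$ lying inside the enclosed region, and that the concluding combinatorial step genuinely exploits reducedness rather than mere connectedness. The passage from the geometric statements about the two-sided $\widehat F$ to the algebraic statements about the one-sided $F$ asserted in the theorem is routine but should be recorded carefully.
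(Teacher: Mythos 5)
The paper does not supply a proof of this theorem: it is quoted as a known background result, with citations to Aumann \cite{A}, Menasco--Thistlethwaite \cite{MT}, and Ozawa \cite{O}, and the present paper never gives its own argument. So there is no in-paper proof against which to compare your attempt; what one can compare against are the cited proofs. Your sketch follows the Menasco--Thistlethwaite crossing-ball (bubble) method, which is indeed the route of the cited Proposition 2.3 of \cite{MT} (Aumann's 1956 argument is combinatorial rather than bubble-based, so that citation fits your sketch less closely). Your outline is correct in shape: pass to the two-sided $\widehat F=\partial N(F)\cap E(L)$, place $F$ on the modified sphere $\Sigma$ with crossing balls, remove intersections of a putative compressing or boundary-compressing disk with $\Sigma$ by innermost-circle/outermost-arc isotopies using the alternating clasp picture on each bubble, and finish with a combinatorial argument using reducedness. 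You have also correctly flagged where each hypothesis enters.

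Two small remarks. First, in the reduction from $\widehat F$ to $F$, the operative fact is just that $\pi_1(F)$ is torsion-free (free, since $F$ is a compact surface with nonempty boundary): if $\alpha\in\pi_1(F)$ dies in $\pi_1(E(L))$, then $\alpha$ or $\alpha^2$ lifts to a loop in $\widehat F$ which is null-homotopic in $E(L)$, hence null-homotopic in $\widehat F$ once $\widehat F$ is $\pi_1$-injective, and torsion-freeness of $\pi_1(F)$ then forces $\alpha=1$; the relative version for boundary-incompressibility is analogous. Your phrasing about subgroups of exponent $2$ says slightly more than is needed. Second, the last combinatorial step should be handled with some care: reducedness enters because a nugatory crossing allows a compression (or a boundary-compression) of the appropriate checkerboard surface, so it is not merely connectedness that closes the argument; you flag this, which is good, but it should be made explicit if the sketch is expanded. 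Overall the proposal is a faithful high-level account of the \cite{MT} proof, with the heavy lifting of the bubble induction deliberately left as a sketch.
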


\subsection{Rational tangles and Montesinos knots}

We recall that a {\em rational tangle} is a $2$-tangle that is homeomorphic to the trivial $2$-tangle as a map of pairs consisting of the $3$-ball and two arcs.
Usually, a rational tangle is inductively drawn as in Figure \ref{rational} which consists of $a_i$-twists.
We say that such a rational tangle diagram is {\em standard} if all $a_i$ are positive or negative, namely it is alternating.
It is known that any rational tangle has a standard form.

The {\em fraction} (or {\em slope}) of a rational tangle $(a_1,a_2,\ldots,a_n)$ is defined as the number given by the continued fraction $[a_n,a_{n-1},\ldots,a_1]$, where $a_1,\ldots a_{n-1}$ are non-zero integers and $a_n$ is an integer.
Conway (\cite{C}) proved that the fraction is well-defined and completely determines the rational tangle up to tangle equivalence.

\begin{figure}[htbp]
	\begin{center}
	\includegraphics[trim=0mm 0mm 0mm 0mm, width=.4\linewidth]{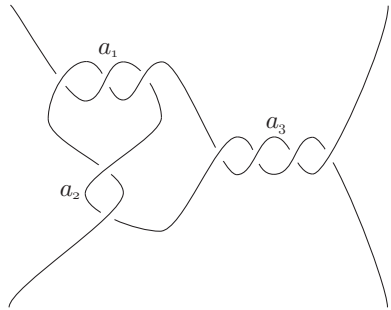}
	\end{center}
	\caption{A rational tangle $(3,2,4)$ with the fraction $[4,2,3]=31/7$}
	\label{rational}
\end{figure}

A knot or link $K$ is called {\em Montesinos} if $K$ has a form which is obtained by summing $n$ rational tangles $T_i$ with the slope $r_i$ as in Figure \ref{Montesinos figure}, and we denote the result as $K=M(r_1,\ldots,r_n)$.
A Montesinos knot or link $K=M(r_1,\ldots,r_n)$ is {\em pretzel} if all numerators of $r_i$ are $\pm 1$, namely $r_i=\pm 1/p_i$, and then we denote $K=P(p_1,\ldots,p_n)$.

\begin{figure}[htbp]
	\begin{center}
	\includegraphics[trim=0mm 0mm 0mm 0mm, width=.4\linewidth]{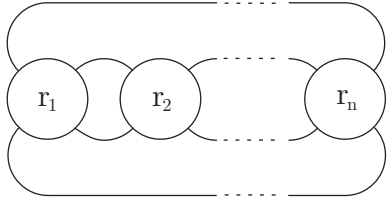}
	\end{center}
	\caption{A Montesinos knot or link $M(r_1,\ldots,r_n)$}
	\label{Montesinos figure}
\end{figure}

Similarly, a tangle $T$ is called {\em Montesinos} if $T$ has a form which is obtained by summing $n$ rational tangles $T_i$ with the slope $r_i$ as in Figure \ref{Montesinos tangle figure}, and we denote the result as $T=MT(r_1,\ldots,r_n)$.

\begin{figure}[htbp]
	\begin{center}
	\includegraphics[trim=0mm 0mm 0mm 0mm, width=.4\linewidth]{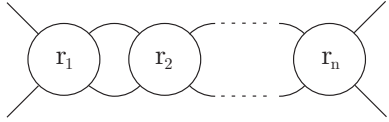}
	\end{center}
	\caption{A Montesinos tangle $MT(r_1,\ldots,r_n)$}
	\label{Montesinos tangle figure}
\end{figure}

\begin{lemma}\label{deform}
Let $MT(-r_1,r_2)$ be a Montesinos tangle with two rational tangles with slopes $-r_1, r_2$, where $0<r_i<1$ for $i=1,2$.
Then $MT(-r_1,r_2)$ can be deformed into a Montesinos tangle $MT(1-r_1,r_2-1)$.
\end{lemma}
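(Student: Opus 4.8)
The plan is to deform $MT(-r_1,r_2)$ by inserting a cancelling pair of half-twists between the two rational tangles and then re-grouping. Concretely, I will use that $MT(-r_1,r_2)$ is by construction the tangle sum $T(-r_1)+T(r_2)$ of two rational tangles as in Figure \ref{Montesinos tangle figure}, that tangle addition is associative up to isotopy rel boundary, and that adding a horizontal integer tangle $[m]$ to a rational tangle changes its slope by $m$ and keeps it rational. The last point is the basic building rule for rational tangles, and, together with Conway's classification of rational tangles by their slope (\cite{C}), it is all the tangle calculus required.

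So I would first write $MT(-r_1,r_2)=T(-r_1)+T(r_2)$ and insert between the two factors a trivial tangle presented as $[1]+[-1]$; this is an ambient isotopy rel the four endpoints, since $[1]+[-1]$ is a pair of strands carrying one positive and one negative crossing, which cancel by a Reidemeister II move. Re-grouping the result by associativity gives $\bigl(T(-r_1)+[1]\bigr)+\bigl([-1]+T(r_2)\bigr)$. By the slope rule, $T(-r_1)+[1]$ is a rational tangle of slope $-r_1+1=1-r_1$ and $[-1]+T(r_2)$ is a rational tangle of slope $-1+r_2=r_2-1$, so the whole tangle is $T(1-r_1)+T(r_2-1)=MT(1-r_1,r_2-1)$, which is the assertion. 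The sum of slopes $-r_1+r_2=(1-r_1)+(r_2-1)$ is unchanged, as it must be since only a trivial tangle was inserted; the hypothesis $0<r_i<1$ enters only to record that $0<1-r_1<1$ and $-1<r_2-1<0$, so that the output stays in the same controlled range of slopes.

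The step needing the most care is pinning down conventions so that the schematic ``slide a crossing across'' is an honest isotopy: one must check that the decomposition in Figure \ref{Montesinos tangle figure} genuinely exhibits $MT(-r_1,r_2)$ as a horizontal tangle sum, so that the half-twist region created between $T(-r_1)$ and $T(r_2)$ is horizontal and may be absorbed into either adjacent rational-tangle ball. Once that is fixed there is no essential obstacle, and the same argument gives $MT(a,b)\cong MT(a+k,b-k)$ for every integer $k$; the hypotheses $0<r_i<1$ play no role beyond keeping the two output slopes in $(-1,1)$.
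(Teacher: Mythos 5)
Your proof is correct, and it is essentially the deformation depicted in the paper's Figure~\ref{deformation}: insert a cancelling pair of horizontal half-twists $[1]+[-1]$ between the two rational tangle balls (a Reidemeister~II move in the product region joining them), re-associate so each twist is absorbed into the adjacent ball, and invoke Conway's classification together with the rule $\phi(T+[n])=\phi(T)+n$ to identify $T(-r_1)+[1]\cong T(1-r_1)$ and $[-1]+T(r_2)\cong T(r_2-1)$. This is clean and complete, and your observation that the same argument gives $MT(a,b)\cong MT(a+k,b-k)$ for all integers $k$ (the hypothesis $0<r_i<1$ only keeps the outputs in $(-1,1)$) is accurate.

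One point worth flagging for context, though it does not affect correctness: the paper's proof of the lemma does a bit more than establish the isotopy. It also names two auxiliary subtangles $T_1,T_2$ visible in Figure~\ref{deformation}, with slopes $-r_1/(1-r_1)$ and $r_2/(1-r_2)$, and records their fractions via the rotation formula $\phi(T)\phi(T^*)=-1$. These subtangle slopes are then referenced explicitly later, in Case (2)-(a) of the proof of Theorem~\ref{Montesinos}, where $\lfloor t_2\rfloor$ for $t_2=-r_2/(r_2-1)$ is used to verify a parity condition. Your streamlined version proves the lemma as stated but does not surface these subtangles, so if it were substituted wholesale for the paper's proof, the downstream reference to "the corresponding slope $t_2$ of a subtangle $T_2$ in Lemma~\ref{deform}" would have to be reworked. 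As a self-contained proof of the statement, however, yours is sound.
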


\begin{proof}
See Figure \ref{deformation}.

\begin{figure}[htbp]
	\begin{center}
	\includegraphics[trim=0mm 0mm 0mm 0mm, width=.6\linewidth]{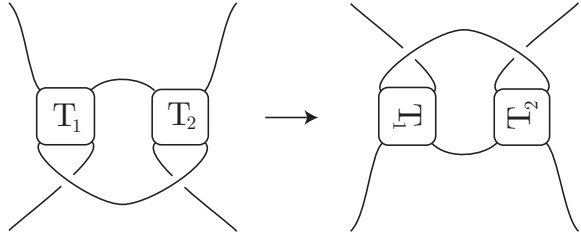}
	\end{center}
	\caption{A deformation from $T(-r_1,r_2)$ to $T(1-r_1,r_2-1)$}
	\label{deformation}
\end{figure}

We remark that the slope of $T_1$ is $-r_1/(1-r_1)$ and that of $T_2$ is $r_2/(1-r_2)$.
The slope $\phi(T)$ of a rational tangle $T$ can be calculated by the formula $\phi(T_1+T_2)=\phi(T_1)+\phi(T_2)$ and $\phi(T_1)\phi(T_1^*)=-1$, where $+$ denotes the tangle sum and $T^*$ denotes the rotation of $T$.
See \cite[Theorem 2.4]{O3} for example.
\end{proof}

\section{New definitions and results}

\subsection{Pre-essential surfaces}

\begin{definition}
Suppose $(S^3,K)$ is a knot and let $E(K)$ be the exterior of $K$ in $S^3$.

A {\em Neuwirth surface} $F$ for $K$ is a compact, orientable, geometrically incompressible and boundary incompressible surface properly embedded in $E(K)$ such that the number of boundary components of $F$ is two and the boundary slope of $F$ is an integer.

A {\em pre-essential surface} $S$ for $K$ is a compact non-orientable surface properly embedded in $E(K)$ with the property that the boundary $\partial N(S)$ of a regular neighbourhood of $S$ can be compressed to a properly embedded incompressible and boundary incompressible surface $F$ for $K$. Note that the boundary slope of the resulting surface $F$ is the same as the boundary slope of the pre-essential surface $S$.

A {\em quasi-spanning surface} is a compact non-orientable surface properly embedded in $E(K)$ with a single boundary component.
We note that any even rational boundary slope has such a quasi-spanning surface at this slope.
If a quasi-spanning pre-essential surface $S$ has an integer boundary slope, namely it is spanning pre-essential, then $\partial N(S)$ compresses to a Neuwirth surface.
Examples for spanning pre-essential surfaces are given in Example \ref{10_128} and \ref{10_139}.
\end{definition}

\begin{remark}
A pre-essential surface must have an odd number of boundary curves and their boundary slope is an even rational number. The reason is that if we attach an annulus along the boundary torus to two adjacent boundary curves of a pre-essential surface, then we can obtain a new non-orientable surface with two fewer boundary components. Since there are no closed embedded non orientable surfaces in $S^3$, it immediately follows that there must be an odd number of boundary curves. Reducing to one boundary curve, we see that the boundary slope must be even since it is zero in homology with coefficients in $\Bbb{Z}_2$. 
\end{remark}

A convenient criterion for a surface to be pre-essential is given by the following result. 

\begin{lemma}\label{pre-essential}
Suppose $(S^3,K)$ is a knot. A compact properly embedded non-orientable surface $S$ for $K$ is pre-essential if $S$ is algebraically boundary incompressible.
\end{lemma}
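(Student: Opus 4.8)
The plan is to show that if $S$ is algebraically boundary incompressible, then the torus boundary $\widehat F=\partial N(S)$ of a regular neighbourhood can be compressed (and boundary compressed) in $E(K)$ until it becomes incompressible and boundary incompressible, while keeping the boundary slope fixed. Write $N=N(S)$ for a twisted $I$-bundle neighbourhood of $S$ in $E(K)$, so $\widehat F$ is the associated $\partial I$-subbundle, a connected surface double-covering $S$; its boundary slope on $\partial E(K)$ equals that of $S$ (an even rational, by the preceding remark). The surface $\widehat F$ separates $E(K)$ into $N$ and the complementary piece $E(K)\setminus \mathrm{int}(N)$.

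First I would run the standard compression procedure: as long as $\widehat F$ admits a compressing disk $D$ in $E(K)$, compress along $D$; since each compression strictly decreases $-\chi$, the process terminates at a (possibly disconnected) surface $F'$ that is incompressible in $E(K)$, and the boundary slope is unchanged because compressions do not affect $\partial E(K)$. The key point is to rule out bad outcomes of compression: a compressing disk with boundary on the $N$-side would, by irreducibility of $E(K)$ and an innermost-disk/outermost-arc argument, have to be ``inessential'' relative to $S$ — i.e. it would be isotopic to one coming from a compression of $S$ itself or it would bound a disk on $\widehat F$ — using that $S$ is algebraically incompressible (which follows from algebraic boundary incompressibility, since $\pi_1(S,\partial S)\hookrightarrow \pi_1(E(K),\partial E(K))$ forces $\pi_1(S)\hookrightarrow\pi_1(E(K))$ by a diagram chase on the long exact sequences of the pairs). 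In particular no compression can throw away a component containing $\partial \widehat F$, so the resulting $F'$ still has boundary on $\partial E(K)$ at the original slope, and one component of $F'$ is the promised $F$. A parallel argument handles boundary compressions: a boundary compressing disk for $\widehat F$ either descends to a boundary compression of $S$ — impossible since $S$ is algebraically, hence geometrically, boundary incompressible — or is trivial. After finitely many steps the surface is both incompressible and boundary incompressible in the geometric sense.

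The main obstacle is the interaction between the two sides of $\widehat F$ and the passage from the algebraic hypothesis on $S$ to control of disks for $\widehat F$: a compressing disk for $\widehat F$ need not lie on one side, so I expect to need an innermost-circle argument on $D\cap \widehat F$ to first isotope $D$ to lie in $N$ or in the complement, and then a separate analysis in each piece. Inside $N$ (a twisted $I$-bundle over $S$), compressing disks for $\widehat F$ correspond exactly to compressing (or $\partial$-compressing) disks for $S$, which are excluded by hypothesis; in the complement, one uses irreducibility of $E(K)$ together with the fact that $\widehat F$ is $\pi_1$-injective in $N$ to push any complementary compressing disk across and reduce the count. Care is needed because compressing $\widehat F$ in the complement can disconnect it or create new boundary components, but the homological constraint (the slope is fixed and even) plus the tracking of which component carries $\partial E(K)$ ensures the final surface $F$ is nonempty, properly embedded, incompressible, boundary incompressible, and meets $\partial E(K)$ at the slope of $S$. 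Assembling these observations yields that $S$ is pre-essential, as claimed.
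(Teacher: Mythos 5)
Your proposal has a genuine gap. You attempt to argue directly that iterated compressions and $\partial$-compressions of $\widehat F = \partial N(S)$ terminate in an essential surface with the correct boundary. The observation that compressing disks for $\widehat F$ can be pushed outside $N$ is correct (it uses only that $\widehat F$ is $\pi_1$-injective in the twisted $I$-bundle $N$). But your handling of boundary compressions does not close the argument: your claim, that a boundary compressing disk for $\widehat F$ either descends to a boundary compression of $S$ or is trivial, only addresses $\widehat F$ itself and only disks on the $N$-side. After even one compression the new surface $F_1$ lies in $E(K)\setminus N$, and there is no reason that a boundary compressing disk for $F_1$ corresponds to anything on $S$. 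More seriously, iterated compression can leave only boundary-parallel annulus components (which are incompressible yet still boundary compressible and can be removed), so the process could in principle terminate in the empty or closed surface; your slope-preservation remark does not rule this out. You invoke only the \emph{geometric} boundary incompressibility of $S$, which is not strong enough to control what happens several compressions in.

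The paper's proof isolates exactly this failure mode with a short contradiction argument, using the algebraic hypothesis in its singular form. If no sequence of compressions of $\partial N(S)$ yields an incompressible, boundary incompressible surface, then at some stage a component of the compressed surface must be a boundary-parallel annulus. That annulus carries an essential arc $\lambda$ homotopic rel endpoints into $\partial E(K)$; $\lambda$ can be taken to avoid the compression caps, so $\lambda$ is already an essential arc on $\partial N(S)$, and projecting along the $I$-bundle fibers gives a (possibly singular) essential arc on $S$ homotopic into $\partial E(K)$. This directly contradicts the \emph{algebraic} boundary incompressibility of $S$. The key idea missing from your proposal is that the hypothesis must be applied to singular boundary compressions arising from boundary-parallel annuli in the compressed surface, not to embedded boundary compressing disks for $\widehat F$. (Incidentally, your aside that algebraic boundary incompressibility implies algebraic incompressibility is correct here because $\partial S$ is essential on the torus $\partial E(K)$ and $\pi_1(\partial E(K)) \to \pi_1(E(K))$ is injective; but this step is not needed in the paper's argument.)
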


\begin{proof}
Assume that $S$ is a compact properly embedded non-orientable surface in $E(K)$. Suppose that $\partial N(S)$ does not compress to a properly embedded orientable incompressible and boundary incompressible surface $F$ for $K$. Then it is easy to deduce that after a series of compressions of $\partial N(S)$, a boundary parallel annulus is one of the components of the resulting surface. Consequently there is an essential arc $\lambda$ on such an annulus which is homotopic into $\partial E(K)$ keeping its ends fixed. But then $\lambda$ is also an essential arc on $\partial N(S)$ which is homotopic into $\partial E(K)$ implying that $S$ is not algebraically boundary incompressible. So this completes the proof. 
\end{proof}

We next show that if a knot has a pre-essential surface then it has a geometrically incompressible and boundary incompressible pre-essential surface. 

\begin{lemma}\label{geometrically incompressible}
Suppose that a knot $(S^3,K)$ has a pre-essential surface $S$. Then the knot admits a geometrically incompressible and geometrically boundary incompressible pre-essential surface $S_0$ with the same boundary slope as $S$. Moreover $S_0$ can be chosen to be algebraically boundary incompressible. 
\end{lemma}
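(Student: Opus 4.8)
The plan is to obtain $S_{0}$ as a pre-essential surface of minimal complexity. We may take $S$ to be connected. Since a connected non-orientable surface with non-empty boundary has Euler characteristic at most $0$, the quantity $-\chi$ is bounded below on the set $\mathcal{S}$ of connected non-orientable pre-essential surfaces for $K$ whose boundary slope equals that of $S$; choose $S_{0}\in\mathcal{S}$ with $-\chi(S_{0})$ minimal. I claim this $S_{0}$ works.

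The core of the argument is that compressing, or performing an admissible boundary compression on, a pre-essential surface of this boundary slope again yields a pre-essential surface of the same slope with strictly larger Euler characteristic. Write $\widehat{S}=\partial N(S)\cap E(K)$, the orientation double cover of $S$, so that by definition $\widehat{S}$ compresses and boundary compresses to an incompressible and boundary incompressible surface $F$. Given a compressing disk $D$ for $S$, we may assume $\partial D$ is two-sided in $S$ (a one-sided compressing curve yields a two-sided one by absorbing a M\"obius neighbourhood), so the $I$-bundle $N(S)\to S$ is trivial over $\partial D$; pushing $D$ off $S$ to each side gives disjoint disks $D_{1},D_{2}$ whose boundaries are the two lifts of $\partial D$ to $\widehat{S}$, and surgering $\widehat{S}$ along $D_{1}$ and $D_{2}$ is identified with $\partial N(S')\cap E(K)$, where $S'$ is $S$ compressed along $D$. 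An innermost-disk argument using the free covering involution of $\widehat{S}\to S$ shows these two lifts are essential in $\widehat{S}$, and a routine innermost-disk (and, for boundary compressions, outermost-arc) rerouting shows that compressing a surface along any compressing disk, or boundary compressing along any boundary compressing disk, preserves the property of compressing and boundary compressing to an essential surface. Hence $\partial N(S')$ still compresses and boundary compresses to $F$, so $S'$ is pre-essential; the slope is preserved since $D\cap\partial E(K)=\emptyset$; and since $\partial D$ is essential and not boundary parallel in $S$, the compression creates no disk component and at least one resulting component is non-orientable. The same scheme handles boundary compressing disks once one observes, using that the boundary slope is even and that a pre-essential surface has an odd number of boundary curves (as noted above), that only boundary compressions along an arc based at a single boundary curve and non-separating there are relevant: any other one would create a boundary curve null-homologous mod $2$ on the torus $\partial E(K)$, hence bounding a disk there --- a configuration impossible on a pre-essential surface --- while the relevant ones preserve the number of boundary curves, the slope, and (away from small genus) non-orientability. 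Minimality of $S_{0}$ therefore forces $S_{0}$ to be geometrically incompressible and geometrically boundary incompressible.

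For the last sentence, suppose $S_{0}$ were algebraically boundary compressible; then some arc $\lambda$ in $S_{0}$, not boundary parallel in $S_{0}$, is homotopic rel endpoints into $\partial E(K)$. A lift $\widehat{\lambda}$ of $\lambda$ to $\widehat{S_{0}}$ is again not boundary parallel there, by the covering-involution argument, and, being isotopic in $N(S_{0})$ to the core $\lambda$, is still homotopic rel endpoints into $\partial E(K)$. Rerouting the compressing and boundary compressing system that carries $\widehat{S_{0}}$ onto $F$ to be disjoint from $\widehat{\lambda}$ and tracking $\widehat{\lambda}$, we obtain a non-boundary-parallel arc on $F$ homotopic rel endpoints into $\partial E(K)$, contradicting the fact that $F$, being a two-sided incompressible and geometrically boundary incompressible surface, is algebraically boundary incompressible by the loop theorem and Dehn's lemma. (Alternatively, closing $\lambda$ up gives an essential accidental curve on $S_{0}$, and surgering along the corresponding accidental annulus removes such arcs by a finite descent while keeping the surface in $\mathcal{S}$.) Thus $S_{0}$ is algebraically boundary incompressible, which re-confirms via Lemma \ref{pre-essential} that it is pre-essential.

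The step I expect to be the main obstacle is making precise the dictionary between surgeries on $S$ and surgeries on $\widehat{S}=\partial N(S)$: that a (boundary) compression of $S$ corresponds to (boundary) compressions of $\widehat{S}$ along essential curves or arcs whose effect can be absorbed into a system realising $F$, together with tracking which boundary compressions are admissible under the parity constraint and excluding the small-genus and slope-zero degenerations in which a boundary compression would render the surface orientable. The covering-space and innermost-disk bookkeeping --- used both to show that lifts of essential curves are essential and to propagate the bad arc onto $F$ --- is routine but needs to be carried out with care.
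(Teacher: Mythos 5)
The proposal takes a genuinely different route from the paper, but it contains real gaps that the paper's own argument is specifically constructed to avoid.

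The paper does \emph{not} minimize complexity over all pre-essential surfaces at the given slope. Instead it works directly with the chosen sequence of compressing disks $D_1,\dots,D_k$ taking $\partial N(S)$ to $F$, compresses $S$ only along innermost disks of $D_j\cap S$ so as to make $S$ disjoint from each $F_j$ in turn, and thereby obtains a surface $S^*$ (and eventually $S_0$) that is \emph{disjoint from $F$}. That disjointness is the engine of the whole proof: geometric incompressibility is then finished off in the complement of $F$, and algebraic boundary incompressibility of $S_0$ follows immediately because any singular boundary-compressing disk for $S_0$ would produce, by innermost intersection, a boundary compression of the essential surface $F$. Your $S_0$, chosen only by an Euler-characteristic minimization, comes with no such companion surface $F$ that it is disjoint from, and so you have to re-derive everything ``from scratch'' for it.

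Your central claim---that an arbitrary geometric compression (or an ``admissible'' boundary compression) of a pre-essential surface again yields a pre-essential surface at the same slope---is exactly the step that is not justified, and the ``routine innermost-disk/outermost-arc rerouting'' you invoke does not obviously go through. The definition of pre-essential says only that \emph{some} sequence of compressions of $\partial N(S)$ reaches an incompressible, boundary-incompressible surface; it does not say that \emph{every} maximal compression does, nor is maximal compression of a surface unique when compressions can occur on both sides. So if you first compress $\partial N(S)$ along the two parallel copies $D_1,D_2$ of your disk $D$, there is no general principle guaranteeing that the resulting surface still compresses down to an essential surface rather than, say, a surface with a boundary-parallel annulus component. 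You would need a genuine lemma here (a uniqueness-of-maximal-compression statement adapted to the present setting), and you have not proved one; the paper sidesteps the issue entirely by choosing the compressions of $S$ to be compatible with the given sequence $D_1,\dots,D_k$. The same criticism applies, more sharply, to your treatment of boundary compressions (which you yourself flag as ``the main obstacle''), and to the final paragraph, where you again appeal to ``rerouting the compressing system to be disjoint from $\widehat\lambda$''---there is no reason a compressing disk in that system can always be made disjoint from a prescribed essential arc on the surface. In contrast, the paper never boundary-compresses $S$ at all: it gets boundary incompressibility for free from disjointness with $F$.

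One smaller point: the assertion that after compressing $S$ ``at least one resulting component is non-orientable'' is true in this setting, but for a reason you should state explicitly---the non-closed component has the same boundary as $S$, which lies at a non-longitudinal slope (and has an odd number of components), so it cannot be orientable; this uses the remark preceding Lemma~\ref{pre-essential} rather than anything about $\partial D$. You also assume $S$ may be taken connected, which needs a word of justification since pre-essentiality is a property of $\partial N(S)$, not of individual components.

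In short, the minimization scheme is an attractive idea, but as written it rests on an unproved (and, as far as I can see, non-obvious) claim that compression preserves pre-essentiality, whereas the paper's proof avoids needing any such claim by tracking disjointness from $F$ throughout.
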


\begin{proof}
Let $F$ be the properly embedded, orientable, incompressible and boundary incompressible surface obtained by compressing $\partial N(S)$. Suppose that compressing disks $D_1, D_2, \ldots, D_k$ are used to transform $\partial N(S)$ to $F$. Let $F_1,F_2, \ldots, F_k =F$ be the sequence of surfaces obtained by these compressions. (Any closed components of these surfaces can be discarded). Clearly $D_1$ must be on the side of $\partial N(S)$ away from $S$. However after several compressions, we may reach a compression disk $D_j$ for $F_{j-1}$ which is on the same side as $S$ and in fact which may meet $S$. However using a sequence of innermost disks of intersection of $D_j$ with $S$, we can geometrically compress $S$ to a new compact properly embedded non-orientable surface $S'$ which is disjoint from $F_{j-1}$ \emph{and} the compressing disk $D_j$ and hence also to $F_j$. So by repeatedly changing $S$ by compressions, we eventually obtain a new compact properly embedded non-orientable surface $S^*$ which is disjoint from $F$,

 Next, if we geometrically compress $S^*$, then this can be achieved in the complement of $F$, since any compressing disk for $S^*$ can be isotoped off of $F$. But then clearly $S^*$ cannot be completely compressible and so after a finite number of steps we end up with a non-orientable geometrically incompressible surface $S_0$ with the same boundary curve as $S$ and which is disjoint from $F$. But any algebraic boundary compression of $S_0$ would have to intersect $F$ and clearly an innermost intersection arc would give a non trivial boundary compression of $F$, contrary to our assumption that $F$ is properly embedded, orientable incompressible and boundary incompressible. This completes the proof that $S_0$ is geometrically incompressible and geometrically boundary incompressible, since it is algebraically boundary incompressible. Finally since $S_0$ is also algebraically boundary incompressible, it is pre-essential by our previous Lemma \ref{pre-essential}. 
\end{proof}

Putting together Lemmas \ref{pre-essential} and \ref{geometrically incompressible}, we have the following theorem.

\begin{theorem}
Let $(S^3,K)$ be a knot.
Then the following conditions are equivalent.
\begin{enumerate}
\item There exists a pre-essential surface $S$ with boundary slope $r$.
\item There exists an algebraically boundary incompressible non-orientable surface $S_0$ with boundary slope $r$.
\end{enumerate}
Moreover we can also assume that $S_0$ is geometrically incompressible (and geometrically boundary incompressible).
\end{theorem}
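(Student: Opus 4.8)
The plan is to obtain the theorem as a formal consequence of Lemmas \ref{pre-essential} and \ref{geometrically incompressible}; the proof is short, and the only point requiring genuine care is the bookkeeping of boundary slopes.

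First I would prove $(2)\Rightarrow(1)$. Given an algebraically boundary incompressible non-orientable surface $S_0$ with boundary slope $r$, Lemma \ref{pre-essential} says precisely that $S_0$ is pre-essential. Moreover, the definition of a pre-essential surface records that compressing $\partial N(S_0)$ to the incompressible and boundary incompressible surface $F$ leaves the boundary slope unchanged, so $F$ --- and hence the pre-essential surface $S := S_0$ --- has boundary slope $r$. This gives $(1)$.

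Next I would prove $(1)\Rightarrow(2)$ together with the ``moreover'' clause. Starting from a pre-essential surface $S$ with boundary slope $r$, I would simply invoke Lemma \ref{geometrically incompressible}: it produces a surface $S_0$ that is geometrically incompressible, geometrically boundary incompressible, and (by the final sentence of that lemma) algebraically boundary incompressible, with the same boundary slope $r$. Thus $S_0$ witnesses $(2)$, and it already satisfies the parenthetical strengthening asked for in the theorem.

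The one point worth a remark, rather than a real obstacle, is the consistency of the phrase ``boundary slope $r$'': a pre-essential surface can have several boundary curves, but, as noted in the remark following the definition, they are mutually parallel on $\partial E(K)$ and there are an odd number of them, so $r$ is well defined; and the compressions used in Lemma \ref{geometrically incompressible} either eliminate trivial boundary curves or leave a boundary curve fixed, so the slope is literally preserved at each stage. Since both lemmas are already established, there is no hard step: the content of the theorem is exactly the observation that Lemma \ref{geometrically incompressible} upgrades any pre-essential surface to one that is simultaneously geometrically and algebraically boundary incompressible, while Lemma \ref{pre-essential} supplies the reverse implication.
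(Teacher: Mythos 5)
Your proof is correct and follows exactly the paper's approach: the theorem is presented there as an immediate consequence of Lemmas \ref{pre-essential} and \ref{geometrically incompressible}, with $(2)\Rightarrow(1)$ being Lemma \ref{pre-essential} and $(1)\Rightarrow(2)$ together with the ``moreover'' clause being Lemma \ref{geometrically incompressible}. Your additional remark on the well-definedness and preservation of the boundary slope is a harmless clarification that does not change the argument.
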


The following results Theorem \ref{homology} and Theorem \ref{large} give criteria producing an essential surface at an even boundary slope and the Neuwirth conjecture respectively, using a homological argument. We first give a key lemma required for both proofs. 

\begin{lemma}\label{normal}
Suppose that $(S^3,K)$ is a non-trivial knot and $X$ is a connected $2$-complex embedded in $E(K)$ with the properties that $X \cap \partial E(K)={\mathcal C}$ is a non-empty collection of disjoint simple closed curves, all at a non-zero rational even boundary slope and the mapping induced by inclusion $H_1(X,\Bbb{Z}_2) \to H_1(E(K),\Bbb{Z}_2)$ has image zero. Then there is a properly embedded compact connected non-orientable surface $F$ which is disjoint from $X$ and meets $\partial E(K)$ at a collection of simple closed curves $\mathcal C^\prime$ with the same boundary slope as the loops of $\mathcal C$. Moreover if $X$ is non-separating, then the number of curves in $\mathcal C^\prime$ is at most the number in $\mathcal C$. If $X$ is separating, the number of curves in $\mathcal C^\prime$ is at most half the number in $\mathcal C$.
\end{lemma}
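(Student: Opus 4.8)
The plan is to work with $\Bbb{Z}_2$ coefficients throughout and to produce $F$ by a homological (duality/normal surface) argument in the exterior of a regular neighbourhood $N=N(X)$ of $X$. Recall the relevant facts about the knot exterior: $H_1(E(K);\Bbb{Z}_2)\cong\Bbb{Z}_2$, generated by the meridian; $H_2(E(K);\Bbb{Z}_2)=0$; and $H_2(E(K),\partial E(K);\Bbb{Z}_2)\cong\Bbb{Z}_2$, with non-trivial class carried by a Seifert surface whose boundary is the longitude $\lambda$. A simple closed curve on $\partial E(K)$ is null-homologous in $H_1(E(K);\Bbb{Z}_2)$ precisely when its slope is even, and every such curve is then homologous to $\lambda$ on $\partial E(K)$; in particular the hypothesis already forces the common slope of $\mathcal C$ to be even.

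Let $M''=\overline{E(K)\setminus N}$. Since the curves of $\mathcal C$ are mutually parallel, $P:=\partial E(K)\cap M''$ is a disjoint union of $n=\abs{\mathcal C}$ annuli whose cores lie at the slope of $\mathcal C$; write $\Phi=\overline{\partial N\setminus\partial E(K)}$, so $\partial M''=P\cup\Phi$ with $P\cap\Phi=\partial P=\partial\Phi$. I claim it is enough to find a class $\beta\in H_2(M'',P;\Bbb{Z}_2)$ with $\partial\beta\in H_1(P;\Bbb{Z}_2)=\bigoplus_{i=1}^{n}\Bbb{Z}_2$ a sum of an \emph{odd} number of core classes. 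Indeed, realising $\beta$ by an embedded (for the applications, least-weight normal) surface $F_0\subset M''$ with $\partial F_0$ equal to that odd collection of cores gives a surface disjoint from $X$, meeting $\partial E(K)$ in an odd number, at most $n=\abs{\mathcal C}$, of curves at the prescribed even slope; and any surface in $E(K)$ with an odd number of boundary curves at a non-zero even slope is non-orientable, for if such a surface were orientable its boundary would represent $(\sum\pm1)(p\mu+q\lambda)$ in $H_1(\partial E(K);\Bbb{Z})$, which dies in $H_1(E(K);\Bbb{Z})=\Bbb{Z}\langle\mu\rangle$; as $p\ne 0$ this forces $\sum\pm1=0$, hence an even number of boundary curves. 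Passing to a component of $F_0$ with an odd number of boundary curves then yields the required connected non-orientable $F$.

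The existence of such a $\beta$ is the heart of the matter, and is where the homological hypothesis is used. The image $\bar\beta$ of $\beta$ in $H_2(E(K),\partial E(K);\Bbb{Z}_2)$ has boundary equal to the image of $\partial\beta$ in $H_1(\partial E(K);\Bbb{Z}_2)$, which is $(\#\text{cores}\bmod 2)[\lambda]$; since the non-zero class of $H_2(E(K),\partial E(K);\Bbb{Z}_2)\cong\Bbb{Z}_2$ is the unique one with non-zero boundary, "$\partial\beta$ has an odd number of components" is equivalent to "$\bar\beta\ne 0$", i.e.\ to the map $H_2(M'',P;\Bbb{Z}_2)\to H_2(E(K),\partial E(K);\Bbb{Z}_2)$ being onto. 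Here one uses that $H_1(N;\Bbb{Z}_2)\cong H_1(X;\Bbb{Z}_2)$ (as $N$ deformation retracts onto $X$) maps to zero in $H_1(E(K);\Bbb{Z}_2)$: via the relative Mayer--Vietoris sequence for $E(K)=N\cup_{\Phi}M''$ together with Lefschetz duality in $N$, this should force the non-trivial class of $H_2(E(K),\partial E(K);\Bbb{Z}_2)$ to be carried by $M''$ rather than by the $N$--side. The delicate point is to pin down the correct lift through the pair $(M'',\Phi)$, since when $\Phi$ is disconnected the naive candidate (the class Lefschetz-dual to $\alpha|_{M''}$, for $\alpha$ the generator of $H^1(E(K);\Bbb{Z}_2)$) can vanish.

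For the moreover clause: if $M''$ is disconnected, $M''=M_1''\sqcup M_2''$, and one shows that a separating $X$ forces the two annuli of $P$ flanking each curve of $\mathcal C$ to lie on opposite sides, so the $n$ annuli of $P$ alternate around $\partial E(K)$; hence $n$ is even and each $M_i''$ meets exactly $n/2$ of them. Since $F_0$ has an odd total number of boundary curves, one of the $M_i''$ contains a non-orientable component $F$ of $F_0$, and then $\abs{\mathcal C'}\le n/2=\abs{\mathcal C}/2$. I expect the genuine obstacles to be: (i) the Mayer--Vietoris/duality bookkeeping just sketched, in particular choosing $\beta$ correctly when $\Phi$ has several components; (ii) the point-set care needed because $X$ is a $2$--complex and not a surface (regular neighbourhoods, general position, and the local-separation argument underlying the factor $2$). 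The pleasant part, by contrast, is the non-orientability criterion --- odd boundary at a non-zero even slope forces non-orientability --- which is precisely the parity phenomenon behind the earlier remark that pre-essential surfaces have an odd number of boundary curves.
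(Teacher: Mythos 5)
Your strategy --- replacing the paper's explicit construction by a Lefschetz duality argument in $M''=\overline{E(K)\setminus N(X)}$ and realizing a suitable relative $\Bbb{Z}_2$-class by an embedded surface --- is genuinely different from the paper's proof, and the non-orientability criterion you isolate (odd boundary count at a non-zero even slope forces non-orientability) is correct and is used the same way. The paper works instead directly with a triangulation $\mathcal{T}$ of $E(K)$ that has $X$ as a subcomplex: it builds a maximal tree $T$ in $\mathcal{T}^1$ via Prim/Kruskal, \emph{starting} with all but one edge of each loop of $\mathcal{C}$, then a maximal tree of $X$, then of $X\cup\partial E(K)$, then of all of $\mathcal{T}^1$; labelling each edge of $\mathcal{T}\setminus T$ by its $\Bbb{Z}_2$-class in $H_1(E(K),\Bbb{Z}_2)$ and placing a normal disk with one corner on each $1$-labelled edge produces $F$ outright, disjoint from $X$ because every edge of $X$ lands in $T\cap X$ or is labelled $0$ by the homology hypothesis. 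Your approach avoids the combinatorics but imports regular-neighbourhood and representability questions that the paper's subcomplex trick makes unnecessary.

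The real gap is the step you elide, and it is the hard part of the lemma. You assert that a representative $F_0$ of $\beta\in H_2(M'',P;\Bbb{Z}_2)$ can be taken with $\partial F_0$ "equal to that odd collection of cores," i.e.\ at most one boundary curve per annulus of $P$. That does not follow from duality, nor from invoking least-weight normal surfaces: a least-weight representative of a $\Bbb{Z}_2$-class can perfectly well run several times through a single annulus (the parity constraint only controls the count mod $2$). This bound is exactly where the paper's construction earns its keep: since $T\cap\partial E(K)$ is a maximal forest containing, for each curve of $\mathcal{C}$, a tree with all but one of that curve's edges, two parallel boundary components of $F$ in one annulus would isolate vertices unreachable by the forest, contradicting maximality. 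To make your route rigorous you would need a separate reduction --- e.g.\ tube $F_0$ along an innermost sub-annulus $A'\subset P$ lying between two parallel boundary curves and push off $\partial E(K)$; since $A'\subset P$ the class $[F_0]$ in $H_2(M'',P;\Bbb{Z}_2)$ is unchanged, the surface stays disjoint from $X$, and the boundary count drops by two --- iterated until at most one boundary curve remains per annulus, and then pass to a connected component with odd boundary. As written that argument is absent, and the points you flag yourself (the Mayer--Vietoris bookkeeping when $\Phi$ is disconnected, and the non-manifold nature of $N(X)$ for a general $2$-complex) are also unresolved, whereas the paper sidesteps both by building $F$ inside $E(K)$ directly from the triangulation with $X$-disjointness built in.
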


\begin{proof}

See \cite{JS} for a discussion of basic concepts in normal surface theory.

We use a method from \cite{JRT}. Choose a triangulation $\mathcal T$ of $E(K)$ so that $X$ is a subcomplex.  Then the vertices and edges of $X$ are in the one-skeleton  $\mathcal T^1$ of $\mathcal T$. We want to build a maximal tree for $\mathcal T^1$  using Prim's (\cite{P}) and Kruskal's (\cite{K}) greedy algorithms. Note such a maximal tree is just a tree containing all the vertices of $\mathcal T$. Prim's algorithm takes any tree $T_0$ in $\mathcal T^1$ and adds an edge $E$ from $T_0$ to a vertex not in $T_0$ to form a new tree $T_1$. Continue until all vertices are reached. Kruskal's algorithm is similar, except that one can start with a forest and then add either an edge to one tree of the forest, or an edge connecting two trees of the forest together. 

Start with all except one of the edges of the loops of $X \cap \partial E(K)={\mathcal C}$. Using Kruskal's algorithm, we can enlarge this set of edges to a maximal tree $T_X$ in the one-skeleton of $X$. Next extend this to a maximal tree $T^*$ for the one skeleton of $X \cup \partial E(K)$ using Prim . It is easy to see that $T^* \cap \partial E(K)$ is a maximal forest  $\mathcal F$ for the edges of $\partial E(K)$. Note that each tree in the forest $\mathcal F$ contains all except one of the edges of a loop of $\mathcal C$. Finally extend this tree $T^*$ to a maximal tree $T$ for $\mathcal T^1$ again using Prim. 

Now each edge $E$ of ${\mathcal T} \setminus T$ can be labelled by either $0$ or $1$ depending on whether the cycle defined by connecting the ends of $E$ along a path in $T$ is $0$ or $1$ in $H_1(E(K),\Bbb{Z}_2) = \Bbb{Z}_2$. We also label all the edges in $T$ by $0$. It is easy to see that every face of $\mathcal T$ is then labelled either by all edges $0$ or two edges $1$ and one edge $0$ since the sum of the labels must be $0$ in $\Bbb{Z}_2$. Consequently each tetrahedron has all edges labelled $0$, or three edges at a common vertex labelled $1$ and the other three edges in a common face labelled $0$, or two opposite edges labelled $0$ and the other four edges labelled $1$. In the first case we choose the empty surface inside the tetrahedron, in the second a triangular normal disk with one corner on each edge labelled $1$ and in the third case a quadrilateral normal disk with one corner on each edge labelled $1$. See Figure 11. Doing this for every tetrahedron in $\mathcal T$ gives a properly embedded normal surface $F$. 

\begin{figure}[htbp]
	\begin{center}
	\includegraphics[trim=0mm 0mm 0mm 0mm, width=.6\linewidth]{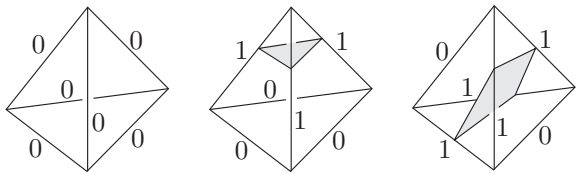}
	\end{center}
	\caption{Labeling edges of a triangulation $\mathcal T$}
	\label{labeling}
\end{figure}

We claim that $F$ is connected, disjoint from $X$, non-orientable and has a family of boundary curves $\mathcal C^\prime$ satisfying the conditions in the theorem. This will complete the proof. First of all, note that any component of $F$ is non-separating, since by definition, an edge containing a corner of a normal disk of $F$ meets $F$ in one point and the ends can be connected along $T$ in the complement of $F$. Hence we see that $F$ cannot have any closed components since there are no non-separating surfaces in $S^3$. Next, $F \cap X = \emptyset$. For by assumption, the map $H_1(X,\Bbb{Z}_2) \to H_1(E(K),\Bbb{Z}_2)$ has image zero. Hence any edge $E$ of ${\mathcal T} \setminus T$ which is in $X$ has label $0$, since $T \cap X$ consists of a maximal tree in $X$ and all these edges are also labelled $0$. But then all the normal disks of $F$ are disjoint from all the triangular faces of the subcomplex $X$. It now follows immediately also that $F$ is disjoint from any tetrahedra in $X$ and so $F \cap X = \emptyset$. 

The next step is to show that $F$ has a collection of boundary curves $\mathcal C^\prime$ parallel to the loops of $\mathcal C$. Once this is established, it follows that the components of $F$ must be non-orientable, because the boundary slope of the loops of $\mathcal C$ is not a longitude and the components of $F$ are non-separating. The labels of the edges of $\mathcal C$ are all $0$ since all these edges are in $X$. There must be edges labelled $1$ in $\partial E(K)$ since $H_1(\partial E(K), \Bbb{Z}_2) \to H_1(E(K), \Bbb{Z}_2)$ is onto. Hence we see that 
$\partial F \ne \emptyset$, each component $C$ of $\partial F$ is non separating and disjoint from the loops of $X \cap \partial E(K)$ by the previous argument. Consequently the components of $F$ are non-separating and there can only be one, since disjoint components could be connected by annuli in $\partial E(K)$ producing a closed non-orientable surface in $S^3$, a contradiction. 

Finally we want to bound the number of curves in $\partial F = {\mathcal C^\prime}$. Recall that the maximal tree $T$ contains a forest of edges $\mathcal F$ in $\partial E(K)$ with the property that each tree in $\mathcal F$ contains all the edges except one of a loop $C$ of $\mathcal C$. We claim this implies that there cannot be two parallel edges of $\partial F$ in any annulus component of $\partial E(K) \setminus {\mathcal C}$. Deducing the bound on the number of curves in $\partial F$ is then easy. 

Suppose on the contrary to the claim that there are parallel curves $C,C^\prime$ of $\partial F$ in an annulus component $A$ of $\partial E(K) \setminus {\mathcal C}$. All the edges of $\mathcal T$ meeting $C,C^\prime$ are labelled $1$ by construction. Hence none of these edges are in the forest $\mathcal F \subset T$. The trees of the forest meet the annulus $\overline A$ in subtrees $T_1,T_2$ where $T_1$ (respectively $T_2$) contains all the edges except one of the boundary component $C_1$ (respectively $C_2$) of $\overline A$, where $C_1 \cup C_2 = \partial \overline A$. But then we see that both the trees $T_1,T_2$ cannot contain any of the edges which cross $C,C^\prime$ since the edges in the trees are labelled $0$. Hence it is obvious that the forest $\mathcal F$ is not maximal, since it will not reach any vertices of $\partial E(K)$ trapped between $C,C^\prime$ in $A$. This contradiction establishes the claim. 

To complete the bound, if $X$ s non separating, since there is at most one curve of $\partial F$ in any annulus of $\partial E(K) \setminus {\mathcal C}$, the number of curves in $\partial F = {\mathcal C^\prime}$ is at most the number of curves in $X \cap \partial E(K) = {\mathcal C}$. If $X$ is separating, then since $F$ is on one side of $X$, we see that there is at most one curve of $\partial F$ in any annulus of $\partial E(K) \setminus {\mathcal C}$ on the same side of $X$ as $F$, i.e half of all the annuli. So this gives the required bounds and completes the proof. 
\end{proof}

\begin{theorem}\label{homology}
Suppose that $(S^3,K)$ is a non-trivial knot. Then $K$ has a pre-essential surface if and only if $K$ has a properly embedded orientable incompressible and boundary incompressible surface $G$ at an even slope, so that the inclusion $H_1(G, \Bbb{Z}_2) \to H_1(E(K), \Bbb{Z}_2)$ has image zero. 
Moreover if a surface $G$ can be found with two boundary curves, then the pre-essential surface can be chosen with a single boundary curve (i.e a quasi-spanning surface) and vice versa.
\end{theorem}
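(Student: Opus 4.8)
The plan is to prove the two implications separately: the reverse one will rest on Lemma~\ref{normal}, the forward one on the structure of a regular neighbourhood of a non-orientable surface. Throughout I would use the following parity observation, immediate from the homology exact sequence of $(E(K),\partial E(K))$ with $\Bbb{Z}$-coefficients and $H_*(E(K);\Bbb{Z})=H_*(S^1;\Bbb{Z})$: a properly embedded orientable surface meeting $\partial E(K)$ only in curves of a single non-zero slope has an even number of boundary curves and is separating, whereas a pre-essential surface has an odd number of boundary curves (the Remark before Lemma~\ref{pre-essential}). I take the relevant slope to be non-zero, so that Lemma~\ref{normal} applies; the Seifert-slope case is not needed for the applications.

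For the forward implication, given a pre-essential surface $S$ at the even slope $r$, I would take $G$ to be the properly embedded incompressible and boundary incompressible surface $F$ to which $\partial N(S)$ compresses. Since $S$ is one-sided in the orientable manifold $E(K)$, the frontier $\partial N(S)$ of the twisted $I$-bundle $N(S)$ is the orientation double cover of $S$ and hence is orientable; compressions preserve orientability, so $G$ is orientable, with boundary slope the even number $r$. The point requiring work is that $H_1(G;\Bbb{Z}_2)\to H_1(E(K);\Bbb{Z}_2)$ has image zero. A single compression can only shrink the image of $H_1$ in $H_1(E(K);\Bbb{Z}_2)$, since a $1$-cycle on the surgered surface can be pushed off the two new disks and hence into the old surface; so this image lies in the image of $H_1(\partial N(S);\Bbb{Z}_2)$. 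Since $\partial N(S)\hookrightarrow E(K)$ is homotopic within $N(S)$ to the composite $\partial N(S)\xrightarrow{p}S\hookrightarrow E(K)$ of the double-cover projection with the inclusion, and since the transfer (Gysin) sequence of $p$ identifies the image of $p_*\colon H_1(\partial N(S);\Bbb{Z}_2)\to H_1(S;\Bbb{Z}_2)$ with the two-sided classes — those annihilated by $w_1(S)$ — the claim reduces to showing that every two-sided simple closed curve $\gamma$ on $S$ has even linking number with $K$. If $\gamma$ separates $S$ it bounds a subsurface of $S\subseteq E(K)$, so $[\gamma]=0$ in $H_1(E(K);\Bbb{Z}_2)$; if $\gamma$ is non-separating, I would argue from the surface obtained by cutting $S$ along $\gamma$, together with the fact that $[\partial S]=0$ in $H_1(E(K);\Bbb{Z}_2)$ (which is exactly the evenness of the slope), that $[\gamma]=0$ in $H_1(E(K);\Bbb{Z}_2)$ as well.

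For the reverse implication, let $G$ be orientable, incompressible and boundary incompressible at the even slope $r$, with $H_1(G;\Bbb{Z}_2)\to H_1(E(K);\Bbb{Z}_2)$ trivial; I may take $G$ connected, and by the parity observation $G$ is separating. Regarding $G$ as a connected $2$-complex I apply Lemma~\ref{normal} with $X:=G$, whose hypotheses then hold, obtaining a connected properly embedded non-orientable surface $F'$ disjoint from $G$, with $\partial F'$ at slope $r$ and at most half as many boundary curves as $G$. Next I would repeatedly compress and boundary compress $F'$ in the complement of $G$ — each such disk can be isotoped off the incompressible, boundary incompressible $G$ by innermost-disk and outermost-arc moves together with irreducibility of $E(K)$ — and discard closed components. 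The resulting surface has the same (odd number of) boundary curves, so by the parity observation a non-orientable component $F''$ survives; it is geometrically incompressible and boundary incompressible, non-orientable, disjoint from $G$, with boundary at slope $r$. Finally $F''$ is algebraically boundary incompressible: an essential arc of $F''$ homotopic rel endpoints into $\partial E(K)$ would, after pushing the accompanying homotopy off $G$ (removing circles of intersection by incompressibility of $G$ and outermost arcs by algebraic boundary incompressibility of $G$), survive in the complement of $G$, whereupon the innermost-arc argument from the proof of Lemma~\ref{geometrically incompressible} produces a boundary compression of $G$ — a contradiction. Hence $F''$ is pre-essential by Lemma~\ref{pre-essential}.

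The ``moreover'' clause follows from the same machinery: if $G$ has exactly two boundary curves then $[\partial G]=0$ in $H_1(\partial E(K);\Bbb{Z}_2)$, and since $H_2(E(K);\Bbb{Z}_2)=0$ we get $[G]=0$ in $H_2(E(K),\partial E(K);\Bbb{Z}_2)$, so $G$ is separating and the separating case of Lemma~\ref{normal} yields $F''$ with at most one, hence (being non-orientable) exactly one, boundary curve — a quasi-spanning surface; conversely, if the pre-essential $S$ is quasi-spanning then $[\partial S]=0$ in $H_1(S;\Bbb{Z}_2)$, so $N(S)$ restricts to an annulus over $\partial S$ and $\partial N(S)$, hence also $G=F$, has exactly two boundary curves. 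The hard part will be the homological vanishing in the forward direction — equivalently, controlling the parity of linking numbers of two-sided curves on a pre-essential surface — with the passage from geometric to algebraic boundary incompressibility in the reverse direction a secondary delicate point.
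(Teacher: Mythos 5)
Your reverse implication and the ``moreover'' clause match the paper's argument closely: invoke Lemma~\ref{normal} with $X=G$, compress the resulting non-orientable surface in the complement of $G$, and use the incompressibility/boundary incompressibility of $G$ to push any singular boundary compression off $G$, whence Lemma~\ref{pre-essential} applies. The counting for the single-versus-two boundary curves is also the paper's.

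The forward implication is where your route diverges, and where it has a real gap. The paper does not trace $H_1(G)$ back through the compressions at all: it invokes Lemma~\ref{geometrically incompressible} to replace $S$ by a geometrically incompressible pre-essential surface $S_0$ \emph{disjoint from $G$}, and then observes that any loop on $G$ is disjoint from $S_0$, hence has zero $\Bbb{Z}_2$-intersection with $S_0$; since $[S_0]$ is the nontrivial class of $H_2(E(K),\partial E(K);\Bbb{Z}_2)$, Poincar\'e--Lefschetz duality forces the loop to be null in $H_1(E(K);\Bbb{Z}_2)$. That is the whole argument. Your alternative --- compressions can only shrink the $H_1$-image, then the inclusion $\partial N(S)\hookrightarrow E(K)$ factors through $p\colon\partial N(S)\to S$, then the transfer sequence identifies $\operatorname{im}p_*$ with $\ker w_1$ --- is plausible as far as it goes, but it reduces you to proving that every two-sided simple closed curve $\gamma$ on $S$ is null in $H_1(E(K);\Bbb{Z}_2)$, and your proposed proof of that for non-separating $\gamma$ does not work. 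Cutting $S$ along $\gamma$ produces a chain whose boundary is $[\partial S]+2[\gamma]$, which in $\Bbb{Z}_2$-homology reads $[\partial S]=0$; the term $2[\gamma]$ vanishes and the cut surface gives you no information about $[\gamma]$ at all. What you actually need here is precisely the duality argument the paper uses, applied to $S$ itself: a two-sided $\gamma$ pushes off $S$, so has even $\Bbb{Z}_2$-intersection with $S$, and since $S$ has an odd number of boundary curves at even slope, $[S]$ is the generator of $H_2(E(K),\partial E(K);\Bbb{Z}_2)$, whence $[\gamma]=0$ (equivalently, the mod-$2$ self-intersection of a curve on $S$ with $S$ equals $w_1([\gamma])$). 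Once you see this, it becomes clear that the entire transfer/double-cover detour is unnecessary: using $S_0$ disjoint from $G$ from Lemma~\ref{geometrically incompressible} gives the same conclusion in one step, and also handles the subtlety you glossed over, namely that a class in $\ker w_1$ on $S$ need not be represented by a single two-sided simple closed curve.

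One further remark on your reverse direction: you should note explicitly, as you implicitly assume, that the resulting surface $F''$ is non-orientable. This follows because its boundary lies at a non-longitudinal slope and $F''$ is non-separating (or directly because Lemma~\ref{normal} produces a non-orientable surface and compressions in the complement of $G$ preserve that property once any orientable two-sided pieces are discarded), but it is worth stating.
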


\begin{proof}
If $K$ has a pre-essential surface $S$, then by definition there is a properly embedded orientable, incompressible and boundary incompressible surface $G$ at the boundary slope of $S$. Note that the surface $G$ is disjoint from a compact non-orientable properly embedded surface $S_0$ by Lemma \ref{geometrically incompressible}. Hence every loop $C$ on $G$ is disjoint from $S_0$ and so the homology class of $C$ is zero in $H_1(E(K),\Bbb{Z}_2)$ since the unique non-zero class of $H_1(E(K),\Bbb{Z}_2)$ is dual to the homology class $[S_0]$ in $H_2(E(K),\partial E(K),\Bbb{Z}_2)$. It is obvious that if $S$ has a single boundary curve then $G$ has two boundary curves. 

Conversely suppose there is a properly embedded orientable incompressible and boundary incompressible surface $G$ at an even boundary slope,  with the property that $H_1(G, \Bbb{Z}_2) \to H_1(E(K),\Bbb{Z}_2)$ has image zero. Using lemma \ref{normal}, we can construct a properly embedded non-orientable surface $S$ satisfying $S \cap G = \emptyset$, since $H_1(G, \Bbb{Z}_2) \to H_1(E(K),\Bbb{Z}_2)$ has image zero. So we conclude that there is a compact properly embedded non-orientable surface $S$ in $E(K)$ which is disjoint from $G$. Note that $S$ has a single boundary curve if $G$ has two boundary curves.

But now it is easy to deduce that $S$ is pre-essential. As in Lemma \ref{pre-essential}, we can perform a series of compressions of $S$ to obtain a new geometrically incompressible and boundary incompressible surface $S_0$ which is disjoint from $G$. It is now easy to deduce that $S_0$ is also algebraically boundary incompressible, since any boundary compression can be homotoped off of $G$. Hence $S_0$ is pre-essential as required. 
\end{proof}

\begin{remark}
Theorem \ref{homology} shows that the even boundary slope conjecture is very closely related to the existence of a pre-essential surface. If a knot has a pre-essential surface then it satisfies the even boundary slope conjecture and the additional assumption about $\Bbb{Z}_2$ homology implies the converse statement. 
\end{remark}

\begin{example}
A nice class of examples is the $(p,q)$-torus knots $K_{p,q}$. These knots all satisfy the Neuwirth conjecture with Neuwirth surface given by an essential annulus. Since the boundary slope of an essential annulus for $K_{p,q}$ is $pq$, it follows that the boundary slope of the annulus is even if one of $p,q$ are even and is odd if both $p,q$ are odd. We can therefore apply Theorem \ref{homology} to deduce that even torus knots have pre-essential surfaces, since it is obvious that the annulus has image zero in $\Bbb{Z}_2$ homology. 

It is also not difficult to show that the odd torus knots do not have pre-essential surfaces. One way of proving this is to observe that if there was such a surface, then there would have to be a properly embedded orientable incompressible and boundary incompressible surface at an even boundary slope, by Lemma \ref{pre-essential}.  But torus knots have no orientable incompressible surfaces other than annuli and the unique minimal genus Seifert surfaces, which have longitudinal boundary slopes. So we conclude that the only separating essential surfaces are annuli and so no pre-essential surface is possible in the odd case. 
\end{example}

\subsection{Large knots}

In this subsection, we show that for large knots, the Neuwirth conjecture is satisfied if a closed incompressible surface can be found in the knot complement with suitable homological properties. 

\begin{theorem}\label{large}
Suppose that $(S^3,K)$ is a knot and there is an embedded closed incompressible surface $\Sigma$ in $E(K)$. Suppose that $i:\Sigma \to E(K)$ is the inclusion map and the induced map $i_*:H_1(\Sigma, \Bbb{Z}) \to H_1(E(K),\Bbb{Z})$ has non-zero image, but the induced map ${\tilde i}_*:H_1(\Sigma, \Bbb{Z}_2) \to H_1(E(K),\Bbb{Z}_2)$ has zero image. Then $K$ satisfies the Neuwirth conjecture. 
\end{theorem}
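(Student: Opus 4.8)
The plan is to deduce the statement from Theorem \ref{homology}. By that theorem, together with the observation recorded among the definitions that $\partial N(S)$ compresses to a Neuwirth surface whenever $S$ is a spanning pre-essential surface, it suffices to produce in $E(K)$ a properly embedded orientable surface $G$ which is incompressible, boundary incompressible and not boundary parallel, has exactly two boundary curves, has boundary slope a non-zero even integer, and for which the inclusion-induced map $H_1(G,\Bbb{Z}_2)\to H_1(E(K),\Bbb{Z}_2)$ has image zero. Indeed, Theorem \ref{homology} then yields a quasi-spanning pre-essential surface $S$ whose boundary slope equals that of $G$; since that slope is an integer, $S$ is spanning pre-essential, so $\partial N(S)$ compresses to a Neuwirth surface, and hence $K$ satisfies the Neuwirth conjecture.

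I would begin with two preliminaries. Since $E(K)$ is a knot exterior, $H_2(E(K),\Bbb{Z}_2)=0$, so the closed, hence orientable, surface $\Sigma$ separates $E(K)$ into compact submanifolds $W_1,W_2$ with $\partial E(K)\subset W_2$. The image of $i_*:H_1(\Sigma,\Bbb{Z})\to H_1(E(K),\Bbb{Z})=\Bbb{Z}$ is non-zero by hypothesis and is contained in $2\Bbb{Z}$ because ${\tilde i}_*$ vanishes; thus it equals $2k\Bbb{Z}$ for some integer $k\ge 1$. A homology class on $\Sigma$ carried onto the generator $2k$ must be primitive, as a proper multiple of it would be carried outside $2k\Bbb{Z}$; hence it is represented by a non-separating embedded simple closed curve $\gamma\subset\Sigma$, with $[\gamma]=2k\mu$ in $H_1(E(K),\Bbb{Z})$, where $\mu$ is the meridian of $K$. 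Note that this is also the class of any curve of slope $2k$ on $\partial E(K)$, since the longitude is null-homologous in $E(K)$.

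The heart of the argument is the construction of $G$. One first builds an orientable surface $R$, properly embedded in $E(K)$ away from a regular neighbourhood of $\gamma$ and transverse to $\Sigma$, whose boundary consists of two oppositely oriented parallel copies of $\gamma$ together with two oppositely oriented curves of slope $2k$ on $\partial E(K)$; such an $R$ exists because the corresponding boundary classes cancel in homology, by the previous paragraph. Forming the oriented double-curve sum $G'=\Sigma+R$ (resolving the curves of $\Sigma\cap R$ and the two copies of $\gamma$) produces a connected orientable surface properly embedded in $E(K)$ with exactly two boundary curves, of slope $2k$. Moreover $G'$ inherits the $\Bbb{Z}_2$-homological vanishing from $\Sigma$: a Mayer--Vietoris computation over the pieces of $G'$ shows that every loop on $G'$ is, mod $2$ in $H_1(E(K),\Bbb{Z}_2)$, a combination of loops on $\Sigma$, loops on $R$, and loops parallel to curves of $\Sigma\cap R$; the last lie on $\Sigma$, loops on $\Sigma$ die because ${\tilde i}_*$ vanishes, and one arranges (see below) that loops on $R$ also die. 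Finally one compresses and boundary-compresses $G'$ as far as possible, always choosing the disks to miss $\Sigma$ (which is possible since $\Sigma$ is incompressible and $E(K)$ is irreducible) and discarding any closed components that appear. Each such move preserves the boundary slope and the $\Bbb{Z}_2$-condition, and no boundary-compression ever occurs, since an orientable surface at a non-zero boundary slope has an even number of boundary curves while a boundary-compression would change that number. The resulting surface $G$ is orientable, incompressible, boundary incompressible, at slope $2k$, with two boundary curves and the required $\Bbb{Z}_2$-homological property.

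I expect the main obstacle to lie in two places within this construction. First, $R$ must be chosen so that every simple closed curve on it has even linking number with $K$, for otherwise the $\Bbb{Z}_2$-vanishing for $G'$ fails; the natural device is to take $R$ planar, absorbing any handle of odd linking number into additional oppositely paired slope-$2k$ boundary curves, while keeping track of the count so that $G$ ends up with exactly two boundary curves. Second --- and this is precisely the point where the hypothesis that $i_*$ is non-zero \emph{over} $\Bbb{Z}$, rather than only mod $2$, is needed --- one must verify that $G$ is essential, in particular not boundary parallel, so that the resulting pre-essential and Neuwirth surfaces are genuinely new; the $\Bbb{Z}$-non-triviality is what prevents the whole construction from collapsing, since it forces $\Sigma$, linking a Seifert surface of $K$ a non-zero number of times, to meet every Seifert surface essentially, whereas a boundary-parallel surface could be pushed off such a surface. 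Verifying that $G'$ and its compressions stay connected, and keeping orientations straight throughout, are the remaining routine checks.
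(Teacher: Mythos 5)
Your high-level plan — build an essential orientable surface $G$ at a non-zero even integer slope with vanishing mod-$2$ first homology image and two boundary curves, then feed it into Theorem~\ref{homology} — differs genuinely from the paper's route, but it has two structural problems and one inefficiency.

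First, the detour through Theorem~\ref{homology} is circular. A properly embedded, connected, orientable, incompressible and boundary incompressible surface $G$ in $E(K)$ with exactly two boundary curves at a (non-zero) integer slope is precisely a Neuwirth surface by the paper's definition, so once you have produced such a $G$, you are already done; the mod-$2$ condition and the passage to a pre-essential surface contribute nothing. This is not wrong, but it obscures where the actual difficulty lies.

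Second, and more seriously, the construction of $G$ does not close. You form $G'=\Sigma+R$ by a double-curve sum and then compress and boundary-compress ``as far as possible,'' asserting that no boundary-compression occurs because the number of boundary curves must stay even. That parity argument is invalid: a boundary-compression of a surface with two parallel boundary curves at slope $2k$ can merge them into a single \emph{trivial} curve on $\partial E(K)$, so the slope is not preserved and the parity constraint no longer applies. Since $[G']=0$ in $H_2(E(K),\partial E(K);\Bbb{Z})$ (its two boundary curves are anti-parallel), nothing homological protects $G'$ from collapsing to a boundary-parallel annulus or disk under these moves. You gesture at the $\Bbb{Z}$-non-triviality of $i_*$ as what ``prevents collapse,'' but no precise mechanism is given. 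Similarly, the requirement that $R$ be planar (so that its loops die in $H_1(E(K),\Bbb{Z}_2)$) is invoked but not justified, and is not automatic from the construction via Poincar\'e--Lefschetz duality.

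The paper sidesteps both problems by working relative to $\Sigma$ rather than in $E(K)$. It splits $E(K)$ along $\Sigma$, keeps the piece $M$ containing $\partial E(K)$, and uses the $\Bbb{Z}$-non-triviality of $i_*$ in a precise way: to show that the images of $H_1(\partial E(K))$ and $H_1(\Sigma)$ in $H_1(M,\Bbb{Z})$ both have rank two, with a common element of the form $m[C]+[C']$ for a non-zero even integer $m$. This produces a non-trivial class in $H^1(M,\Bbb{Z})$, and the dual surface $S$ (obtained by Stallings surgery of a map $M\to S^1$) is automatically non-separating in $M$ and therefore cannot collapse; there is no parity issue because $S$ has boundary on $\Sigma$ as well as on $\partial E(K)$. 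The paper then applies Lemma~\ref{normal} directly to the $2$-complex $X=S\cup\Sigma$, whose loops are all trivial in $H_1(E(K),\Bbb{Z}_2)$, to obtain a pre-essential non-orientable surface with a single boundary curve at slope $m$, which then compresses to a Neuwirth surface. In short, the paper does not construct a properly embedded $G$ at all; constructing one directly, which is what your argument tries to do, is essentially the whole theorem and cannot be reduced to the cut-and-paste step you propose without a genuinely new argument to preclude collapse.
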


\begin{proof}
The first step is to construct a properly embedded orientable incompressible and boundary incompressible surface $S$ with the property that $\partial S \subset \partial E(K) \cup \Sigma$ and $\text{int}S\cap \Sigma = \emptyset$. We also require that the boundary slope of $S \cap \partial E(K)$ is a non-zero even integer. 

Consider the $3$-manifold $M$ obtained by splitting $E(K)$ open along $\Sigma$ and discarding the component which has boundary given by $\Sigma$. Hence $\partial M = \partial E(K) \cup \Sigma$. Let $W_1,W_2$ be the images of $j_*:H_1(\partial E(K),\Bbb{Z}) \to H_1(M,\Bbb{Z})$ and of $i_*:H_1(\Sigma,\Bbb{Z}) \to H_1(M,\Bbb{Z})$ respectively, where $j,i$ are the inclusion maps. By the assumption that $i_*:H_1(\Sigma, \Bbb{Z}) \to H_1(E(K),\Bbb{Z})$ has non-zero image, we see that $W_2$ must contain a non-zero multiple of the homology class of the meridian $[C]$ , which generates $H_1(E(K),\Bbb{Z})$. On the other hand, the assumption that ${\tilde i}_*:H_1(\Sigma, \Bbb{Z}_2) \to H_1(E(K),\Bbb{Z}_2)$ has zero image, implies that this multiple must be even. Consider next the homology class of the longitude $[C']$ in $H_1(M,\Bbb{Z})$. If any non-zero multiple $n[C']$ is zero, then we see that there is a 2-chain $c$ with boundary $n[C']$ in $M$. But then $W_2$ cannot contain any non-zero multiples of the homology class of the meridian $[C]$ since such classes have non-zero intersection number with the 2-chain $c$, but $\Sigma$ is disjoint from $c$. So we get a contradiction. We conclude that the map $j_*$ is one-to-one, since the only classes which could possibly be in the kernel are multiples of the longitude. This establishes that $W_1$ is a rank $2$ subgroup with basis consisting of the meridian, longitude pair. 

Next, we can construct a map $f:M \to S^1$ using the cohomology class in $H^1(M,\Bbb{Z})$ of the meridian $C$. Making $f$ transverse to a base point $x_0 \in S^1$, we get that $f^{-1}(x_0)$ is a properly embedded orientable surface $S'$ with $\partial S' \cap \partial E(K) = C'$. We can surger the map $f$ as in the classical method of Stallings (\cite{S}, \cite{AR}). After a finite number of homotopies of $f$, the result is that $S'$ is replaced by an incompressible and boundary incompressible surface, which still satisfies $\partial S' \cap \partial E(K) = C'$. We again denote this surface by $S'$. Notice that since $W_2$ contains an even multiple of the homology class of the meridian $[C]$, we see that $S' \cap \Sigma \ne \emptyset$. For by the same argument as in the previous paragraph, if $S' \cap \Sigma = \emptyset$, then the 2-chain carried by $S'$ would have non-zero interseciton number with $[C]$, which gives a contradiction. But then the sum of the homology classes of the loops of $S' \cap \Sigma$ equals $[C']$ in $H_1(M)$. This shows  that $W_2$ contains a subgroup of rank $2$, with basis consisting of a non-zero even multiple $m[C]$ and $[C']$.

We can now complete the construction of $S$. Since $W_1,W_2$ both contain the homology class $m[C] +[C']$ there is a 2-chain $c^*$ with boundary given by curves on $\partial E(K)$ and $\Sigma$ which represent $m[C] +[C']$. We can then build a new cohomology class in $H^1(M)$ which is dual to this chain. Construct a new map $f^*:M \to S^1$ using this cohomology class, arranging that ${f^*}^{-1}(x_0)$ is a properly embedded orientable surface $S$ with $\partial S \cap \partial E(K) = C^*$, where $C^*$ has slope $m[C] +[C']$. This surface also has some boundary curves on $\Sigma$. By surgering the map $f^*$, we can homotop $f^*$ until the inverse image ${f^*}^{-1}(x_0)$ is incompressible and boundary incompressible. So this completes the construction of $S$.

To finish the proof, we use Lemma \ref{normal}. Namely we build a pre-essential surface $S^\prime$ which is disjoint from $S$.  Consider the homomorphism $\alpha:H_1(M,\Bbb{Z}_2) \to H_1(E(K),\Bbb{Z}_2) =\Bbb{Z}_2$ induced by the inclusion map of $M \subset E(K)$.  
Clearly $\text{kernel} (\alpha)$ contains all the homology classes of loops in $S$ and in $\Sigma$. Therefore we can choose $X=S \cup \Sigma$ and apply Lemma \ref{normal} giving a compact properly embedded non-orientable surface $S^\prime$ with a connected essential boundary disjoint from $X$.  It is easy to see that $S^\prime$ is pre-essential. For any possibly singular boundary compression of $S^\prime$ can be pulled off the incompressible surface $\Sigma$ and then off the incompressible and boundary incompressible surface $S$. 

The final step is to compress $\partial N(S^\prime)$. Note that this can be done in the complement of $S \cup \Sigma$ but we don't actually need this. We end up with a properly embedded incompressible and boundary incompressible surface $F$ with two boundary curves at non-zero even integer boundary slope. So this implies the knot satisfies the Neuwirth conjecture. 
\end{proof}

\begin{remark}
In \cite{O5}, it is shown that if $(S^3,K)$ is a large knot and it contains a closed embedded incompressible surface $\Sigma$ so that the image
$H_1(\Sigma,\Bbb{Z}) \to H_1(E(K),\Bbb{Z})$ is zero, then there is a Seifert surface $F$ for $K$ disjoint from $\Sigma$. Conversely if a knot $(S^3,K)$  has a Seifert surface $F$ which is not free, then there is a closed embedded incompressible surface $\Sigma$ disjoint from $F$ with the property that $H_1(\Sigma,\Bbb{Z}) \to H_1(E(K),\Bbb{Z})$ is zero.
The method of Theorem \ref{large} gives a similar result for non-orientable spanning surfaces.
\end{remark}

\begin{example}
We construct many examples of knots $(S^3,K)$ with embedded incompressible surfaces $\Sigma$, satisfying the image of $H_1(\Sigma,\Bbb{Z}_2) \to H_1(E(K),\Bbb{Z}_2)$ is zero but the image of $H_1(\Sigma,\Bbb{Z}) \to H_1(E(K),\Bbb{Z})$ is non-zero. Construct a knotted handlebody $Y$ in $S^3$, i.e an embedding so that $\overline{S^3 \setminus Y}=M$ is a $3$-manifold with incompressible boundary $\Sigma=\partial Y$. Choose an embedded knot $K \subset Y$ with the following three properties. Firstly, $K$ is disk busting in $Y$, i.e meets every compressing disk in $Y$. Secondly the induced map by inclusion $H_1(K,\Bbb{Z}_2) \to H_1(Y,\Bbb{Z}_2)$ has image zero, but the induced map by inclusion $H_1(K,\Bbb{Z}) \to H_1(Y,\Bbb{Z})$ has image non-zero. Finally the inclusion induces a map $H_1(K,\Bbb{Z}) \to H_1(Y,\Bbb{Z})$ with non-zero image. Then it is easy to see that $\Sigma$ is a closed incompressible surface in $E(K)$ satisfying the required properties as in Theorem \ref{large}. Note that there are many such knots which are parallel into $\Sigma$, i.e are embedded in $Y$ so that the knot is isotopic into $\Sigma = \partial Y$.
\end{example}

\subsection{Degree one maps}

\begin{definition}
A degree one map $\phi$ between knots $(S^3,K)\to (S^3,K')$  is a continuous proper map between knot complements $E(K) \to E(K')$ which induces an isomorphism $H_3(E(K),\partial E(K)) \to H_3(E(K'),\partial E(K'))$. 
\end{definition}

\begin{theorem}\label{degreeone}
If there exists a degree one map $(S^3,K)\to (S^3,K')$ and $K'$ satisfies the Neuwirth Conjecture, then $K$ also satisfies the Neuwirth Conjecture.
\end{theorem}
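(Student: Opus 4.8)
The plan is to pull the Neuwirth surface of $K'$ back through $\phi$ and clean it up, exploiting that a degree one map is $\pi_1$-surjective. First I would normalise $\phi:E(K)\to E(K')$ on the boundary. The restriction $\partial E(K)\to\partial E(K')$ of a degree one map has degree $\pm1$ (by naturality of $\partial:H_3(E(K),\partial E(K))\to H_2(\partial E(K))$), hence is homotopic to a diffeomorphism (a degree $\pm1$ self-map of $T^2$ is $\pi_1$-surjective, so $\pi_1$-bijective since $\Bbb{Z}^2$ is Hopfian, hence a homotopy equivalence, hence homotopic to a homeomorphism), and this homotopy extends over $E(K)$. Since $\phi$ is $\pi_1$-surjective it is an isomorphism on $H_1(E(K);\Bbb{Z})\cong\Bbb{Z}$, which forces the resulting boundary diffeomorphism to send the meridian of $K$ to the meridian of $K'$; consequently it carries integer-slope curves to integer-slope curves. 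From now on all homotopies of $\phi$ are taken rel $\partial E(K)$, so this boundary diffeomorphism stays fixed.

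By hypothesis $K'$ lies on a closed surface $F'$ with $G':=F'\cap E(K')$ connected, incompressible and boundary incompressible; as a subsurface of a closed surface in $S^3$ it is orientable, and $\partial G'$ is exactly two curves at an integer slope. Homotoping $\phi$ (rel $\partial E(K)$) to be transverse to $G'$ and setting $\Sigma:=\phi^{-1}(G')$ yields a properly embedded two-sided, hence orientable, surface in $E(K)$ whose boundary consists of exactly two integer-slope curves $c_1,c_2$ (the preimages of $\partial G'$), and $\partial\Sigma$ will not change again. Then I would run the standard innermost-disk and outermost-arc simplification: a compressing disk $D$ for $\Sigma$ has $\phi(\partial D)$ null-homotopic in $E(K')$, hence null-homotopic in $G'$ because $G'$ is incompressible, so $\phi$ can be pushed across a disk it bounds in $G'$ to kill that compression; boundary compressions are removed the same way using that $\pi_1(G',\partial G')\to\pi_1(E(K'),\partial E(K'))$ is injective; sphere components are pushed off since $G'$ is aspherical. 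After finitely many such homotopies rel $\partial E(K)$, every component of $\Sigma$ is incompressible and boundary incompressible, there are no sphere or disk components, and $\partial\Sigma$ is still $c_1\cup c_2$.

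The key step is to show $c_1$ and $c_2$ lie on the same component $\Sigma_0$ of $\Sigma$. If not, say $c_i\subset\Sigma_i$ with $\Sigma_1\ne\Sigma_2$, I would view $\phi|_{\Sigma_1}$ as a map of pairs $(\Sigma_1,\partial\Sigma_1)\to(G',\partial G')$. Since $H_2(G',\partial G';\Bbb{Z})\cong\Bbb{Z}$ is generated by $[G']$, we have $\phi_*[\Sigma_1]=d[G']$ for some $d\in\Bbb{Z}$, and naturality of the connecting homomorphism gives $\phi_*[c_1]=\partial\phi_*[\Sigma_1]=d\,\partial[G']$ in $H_1(\partial G';\Bbb{Z})$. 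But $\phi$ is a diffeomorphism on $\partial E(K)$ carrying $c_1$ onto one of the two boundary curves $c_1'$ of $G'$, so $\phi_*[c_1]=\pm[c_1']$; as $[c_1']$ and $[c_2']$ form a basis of $H_1(\partial G';\Bbb{Z})\cong\Bbb{Z}^2$ while $\partial[G']=\pm[c_1']\pm[c_2']$, no integer $d$ can satisfy this, a contradiction. Hence $\partial\Sigma_0=c_1\cup c_2$, so $\Sigma_0$ is a connected, orientable, incompressible and boundary incompressible properly embedded surface with exactly two boundary curves at integer slope, i.e. a Neuwirth surface for $K$; capping it off with the appropriate annulus of $\partial N(K)$ gives a closed surface containing $K$ whose intersection with $E(K)$ is the connected essential surface $\Sigma_0$, establishing the Neuwirth conjecture for $K$. (Should $\Sigma_0$ be boundary parallel, then $K$ lies on a torus parallel to $\partial N(K)$, so $K$ is a torus knot and already satisfies the conjecture.)

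The main obstacle, and the part demanding care, is the interplay between the second and third steps: the transversality simplification is routine in spirit but must be carried out entirely rel the boundary torus so that the number and slope of the boundary curves of $\Sigma$ are never disturbed, and the homological computation of the third step is exactly what is needed to rule out the degenerate outcome in which the pulled-back surface splits its two boundary curves onto two different components — for example two disjoint incompressible Seifert surfaces — which by itself would not produce a connected surface containing $K$.
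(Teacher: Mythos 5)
Your proof takes essentially the same route as the paper: homotope $\phi$ to be transverse to the Neuwirth surface, surger in the style of Stallings to make the preimage essential, and use a homology argument to show the two boundary preimages lie on one component. The paper's connectivity step is a one-line $H_1$ observation ($[\partial F_2]=0$ in $H_1(F_2)$ but maps to the nonzero class $[C]\in H_1(G')$), whereas you run a slightly longer but equivalent argument through $H_2$ of pairs and naturality of the connecting homomorphism; both are fine. One caveat worth noting: your parenthetical fallback --- ``should $\Sigma_0$ be boundary parallel, then $K$ lies on a torus parallel to $\partial N(K)$, so $K$ is a torus knot'' --- is false as stated (\emph{every} knot lies on such a boundary-parallel torus, which is inessential), and similarly the claim that boundary compressions can be removed \emph{rel} $\partial E(K)$ is in tension with the fact that a boundary compression necessarily alters $\partial\Sigma$. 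The correct way to dispatch this case is the one implicit in your own setup: an incompressible $\Sigma_0$ with two integer-slope boundary curves is either boundary incompressible or a boundary-parallel annulus, and the latter is impossible here because the spanning essential arc $\gamma$ of such an annulus would map under $\phi$ to an essential arc of $G'$ joining $c_1'$ to $c_2'$ that is homotopic into $\partial E(K')$, contradicting boundary incompressibility of $G'$.
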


\begin{proof}
We follow the method of surgering maps as in Theorem \ref{large}. Choose a Neuwirth surface $F'$ for the knot $K'$ and assume that $\phi$ is a degree one map $(S^3,K)\to (S^3,K')$. We first homotop $\phi$ so that it is transverse to $F'$. We can also assume after a homotopy that $\phi$ restricts to  a homeomorphism $\partial E(K) \to \partial E(K')$ since it is easy to check that the induced map on the boundary tori is degree one. Then the inverse image $F_0$ of $F'$ under $\phi$ is a properly embedded compact orientable surface with two boundary curves, which have the same boundary slope as $F$, since as is well-known, a degree one map maps the longitude, meridian pair for $K$ to the longitude, meridian pair for $K'$. Although $F_0$ need not be connected, it must have a connected component $F_1$ containing the two boundary curves. For suppose there are two components $F_2,F_3$ each with one boundary curve. The map $\phi$ restricted to $F_2$ gives an induced map $H_1(F_2) \to H_1(F')$ which maps the class $[\partial F_2] =0$ in $H_1(F_2)$ to a non zero class in $H_1(F')$, which is of the form $[C]$ where $C$ is one of the components of $\partial F'$. This is a contradiction. 

If $F_1$ is incompressible and boundary incompressible, it is clearly a Neuwirth surface for $K$. Since $F_1$ has two boundary components, if it is boundary compressible then it is compressible. Choose a compressing disk $D$ for $F$. We can now perform Stallings technique of surgering the map $\phi$. Namely a homotopy of $\phi$ can be performed so that the inverse image of $F'$ is the result of compressing $F$ along $D$.  After a finite number of surgeries, we must end up with an incompressible component $F^*$ of $\phi^{-1}(F')$  which has the same boundary as $F_1$. But then $F^*$ is incompressible and boundary incompressible and hence is the required Neuwirth surface for $K$.
\end{proof}

\begin{corollary}\label{pre-essentialpullback}
If there exists a degree one map $(S^3,K)\to (S^3,K')$ and $K'$ has a pre-essential surface, then $K$ also has a pre-essential surface.
\end{corollary}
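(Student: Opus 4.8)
The plan is to mimic the proof of Theorem \ref{degreeone}, but tracking a non-orientable pre-essential surface rather than an orientable Neuwirth surface, and using Lemma \ref{pre-essential} to close the argument. By hypothesis $K'$ has a pre-essential surface, so by Lemma \ref{geometrically incompressible} we may fix a compact, properly embedded, non-orientable, geometrically incompressible and geometrically boundary incompressible surface $S'$ in $E(K')$ that is also algebraically boundary incompressible, with some even rational boundary slope. Let $\phi:E(K)\to E(K')$ be the degree one map; as in Theorem \ref{degreeone} we first homotope $\phi$ to be transverse to $S'$ and to restrict to a homeomorphism $\partial E(K)\to\partial E(K')$ (the induced map on the boundary tori is degree one), so that $S_0=\phi^{-1}(S')$ is a properly embedded surface in $E(K)$ whose boundary curves have the same slope as $\partial S'$, since a degree one map sends the meridian–longitude pair to the meridian–longitude pair.

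Next I would extract the component of $S_0$ that will become the pre-essential surface. Because $S'$ is non-orientable, the mod $2$ fundamental class argument shows that at least one component $S_1$ of $S_0$ is non-orientable: if every component of $S_0$ were orientable it would carry a $\Bbb{Z}_2$-cycle mapping to $[S']\ne 0$ in $H_2(E(K'),\partial E(K');\Bbb{Z}_2)$ only through an orientable class, which is consistent, so more carefully one argues that $\phi$ restricted to $S_0$ represents $[S']$ in $H_2(E(K'),\partial E(K');\Bbb Z_2)$ and a non-orientable surface cannot be the $\phi$-image of a union of orientable pieces unless one piece is itself non-orientable; pick such a non-orientable component $S_1$, discarding the rest. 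We also keep track of $\partial S_1$: it meets $\partial E(K)$ in curves of the correct even slope, and we may assume (again as in Theorem \ref{degreeone}, since on the boundary $\phi$ is a homeomorphism) that $S_1$ carries the same boundary curves one expects from $S'$.

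Now the Stallings surgery step applies. I would show that $S_1$ can be replaced, by a finite sequence of homotopies of $\phi$ that compress $\phi^{-1}(S')$ along compressing disks, by a surface $S^*$ which is geometrically incompressible; each such compression can be chosen to keep a non-orientable component, since compressing a non-orientable surface along a two-sided curve never makes every component orientable when the original carried the nontrivial $\Bbb Z_2$ class across $\partial E(K)$. Alternatively, and more simply, once $S_1$ is obtained one may forget the map $\phi$ entirely and just compress $S_1$ geometrically inside $E(K)$ as in Lemma \ref{pre-essential}: a maximal sequence of compressions yields a geometrically incompressible non-orientable surface $S^*$ with the same boundary slope. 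The key point to verify is that $S^*$ is \emph{algebraically boundary incompressible}. For this I would argue: if $S^*$ had an essential arc homotopic into $\partial E(K)$ rel endpoints, then pushing this homotopy forward under $\phi$ (and using that $\phi$ is a homeomorphism near $\partial E(K)$) would produce an essential arc on $S'$ homotopic into $\partial E(K')$, contradicting algebraic boundary incompressibility of $S'$ — the one subtlety is that compressions were performed, so the arc should be traced back before the compressions or one should note that compressions do not create algebraically compressible boundary behaviour.

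Finally, by Lemma \ref{pre-essential}, the surface $S^*$, being a compact properly embedded non-orientable surface in $E(K)$ that is algebraically boundary incompressible, is pre-essential; this establishes that $K$ has a pre-essential surface. The main obstacle I anticipate is the transfer of algebraic boundary incompressibility through the degree one map after Stallings surgeries: ensuring that no boundary compression of the surgered surface in $E(K)$ can exist without producing one downstairs in $E(K')$. I would handle this by performing the surgeries so that the map $\phi$ persists at each stage (exactly as Stallings' technique guarantees), so that at the end there is still a map $(E(K),S^*)\to(E(K'),S')$, homeomorphic on the boundary, and then any algebraic boundary compression of $S^*$ maps to a homotopy realizing an essential arc of $S'$ into $\partial E(K')$, the desired contradiction.
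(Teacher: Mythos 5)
Your plan pulls back only the non-orientable surface $S'$ and then tries to establish algebraic boundary incompressibility of the resulting surface $S^*$ in $E(K)$ by pushing a putative boundary compression forward through $\phi$. This is where the argument has a genuine gap. If $\alpha$ is an essential arc of $S^*$ that is homotopic rel endpoints into $\partial E(K)$, you obtain an arc $\phi(\alpha)$ in $S'$ homotopic into $\partial E(K')$; but there is no reason for $\phi(\alpha)$ to be essential in $S'$. The restriction of $\phi$ to $S^*$ is merely a proper map of surfaces, and it need not be injective on $\pi_1(S^*,\partial S^*)$ — it may very well kill the class of $\alpha$. So no contradiction with algebraic boundary incompressibility of $S'$ results, and Lemma \ref{pre-essential} cannot be applied. (There is also a secondary issue: a non-orientable $S_1 \subset E(K)$ is one-sided, and the Stallings surgery technique used in Theorem \ref{degreeone} is set up for two-sided surfaces, so "maintaining $\phi$ through the compressions" is not automatic either.)

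The paper avoids this by pulling back the \emph{other} half of the pre-essential data. A pre-essential surface $S'$ comes with a properly embedded orientable incompressible and boundary incompressible surface $F'$ obtained by compressing $\partial N(S')$. Applying Theorem \ref{degreeone} to $F'$ produces, after Stallings surgery (now legitimately on a two-sided surface), an orientable incompressible and boundary incompressible $F \subset E(K)$ at the same slope. One then arranges that $\phi^{-1}(S')$ contains a non-orientable surface $S$ disjoint from $F$, using only the fact that $\phi$ is a homeomorphism on the boundary tori and a degree/homology argument to force non-orientability. The surface $F$ then acts as a barrier: compressions of $\partial N(S)$ can be isotoped off $F$, and they cannot terminate in a boundary-parallel annulus because that would force a boundary compression of $F$. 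This establishes that $S$ is pre-essential without ever needing $\phi|_{S}$ to be $\pi_1$-injective. Your proposal omits $F'$ and $F$ entirely, and it is precisely their presence that makes the conclusion go through.
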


\begin{proof}
Suppose that $S'$ is a pre-essential surface for $K'$.  By definition, if $G'$ is the boundary of $N(S')$ in $E(K')$, then compressing $G'$ produces a properly embedded orientable incompressible and boundary incompressible surface $F'$ for $K'$. Applying Theorem \ref{degreeone}, we see that the degree one map $\phi$ from $(S^3,K)\to (S^3,K')$ can be homotoped until the inverse image of $F'$ contains a component which is a properly embedded orientable incompressible and boundary incompressible surface $F$ for $K$. 

Now it is easy to also arrange that the inverse image of $S'$ contains a non-orientable quasi-spanning surface $S$ for $K$. In fact, recall from the proof of Theorem \ref{degreeone}, $\phi$ can be assumed to restrict to a homeomorphism between the peripheral tori preserving the longitude, meridian pair. Therefore the inverse image of $S'$ will contain exactly one boundary curve at the same slope as that of $S'$. Hence if the boundary slope of $S'$ is not the longitude, then $S$ must be a non-orientable surface. But then $S$ is disjoint from $F$ and so if we compress the boundary of $N(S)$ in $E(K)$, all the compressions can be isotoped off $F$. We see that $S$ is pre-essential, since compressing $\partial N(S)$ in $E(K)$ will produce a properly embedded orientable incompressible and boundary incompressible surface (not necessarily the same as $F$). 

Finally in the case that the pre-essential surface $S'$ has longitudinal boundary slope, then the inverse image must still contain a non-orientable spanning surface $S$. For the map induced by $\phi$ from $S'$ to $S$ must have degree one , so that the induced map $H_2(S', \partial S') \to H_2(S, \partial S)$ is an isomorphism. But this is impossible if $S$ is non-orientable and $S'$ is orientable. So this completes the proof. 
\end{proof}

\begin{corollary}\label{simple}
Suppose a knot $K$ satisfies the Neuwirth conjecture or has a pre-essential surface, respectively. Then any satellite knot $K^*$ obtained from $K$ and any sum $K \# K^\prime$ satisfy the Neuwirth conjecture or has a pre-essential surface, respectively. 
\end{corollary}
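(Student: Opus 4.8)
The plan is to reduce both assertions to Theorem \ref{degreeone} and Corollary \ref{pre-essentialpullback} by producing, in each case, a proper degree one map of knot pairs $(S^3,\widehat K)\to(S^3,K)$, with $\widehat K$ the satellite $K^*$ or the connected sum $K\#K'$. Theorem \ref{degreeone} then carries the Neuwirth conjecture from $K$ to $\widehat K$, and Corollary \ref{pre-essentialpullback} carries the existence of a pre-essential surface; together these give the two halves of the statement.

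The connected sum case is straightforward. Decompose $E(K\#K')=E(K)\cup_A E(K')$, where $A$ is a meridional annulus, a regular neighbourhood in each boundary torus of a meridian. Let $\phi$ be the identity on $E(K)$ and a retraction $E(K')\to A$ on $E(K')$; such a retraction exists because the core of $A$ is a meridian of $K'$, which generates $H_1(E(K');\Bbb{Z})$, so abelianisation gives a group retraction $\pi_1(E(K'))\to\pi_1(A)=\Bbb{Z}$, and $A\simeq S^1$ being aspherical, this is realised topologically. The two pieces agree on $A$, so $\phi\colon E(K\#K')\to E(K)$ is a well defined proper map of pairs; it is degree one, since a generic interior point of $E(K)\setminus A$ has exactly one preimage (in the $E(K)$ piece, with local degree $+1$). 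Theorem \ref{degreeone} and Corollary \ref{pre-essentialpullback} now apply.

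For a satellite $K^*$ with companion $K$, write $E(K^*)=W\cup_T E(K)$, where $V=N(K)$ is the companion solid torus, $K^*\subset\operatorname{int} V$ is geometrically essential, $W=V\setminus\operatorname{int} N(K^*)$ is the pattern region, and $T=\partial V=\partial E(K)$ is the companion torus. One would like to take $\phi$ to be the identity on $E(K)$ and a degree one collapse $W\to T$. The \emph{main obstacle} is exactly this collapse. Unlike the annulus above, $W$ need not retract onto $T$: already on $H_1$ the meridian of $V$ goes to $w$ times a meridian of $K^*$, where $w$ is the winding number, so no group retraction exists unless $w=\pm1$; and in fact a proper degree one map $E(K^*)\to E(K)$ cannot exist for all satellites, since the Thurston norm is monotone under such maps, so an untwisted Whitehead double of a genus $\ge2$ companion (itself of genus one) admits no such map.

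So I would complete the satellite case by cases on $w$. If $w=\pm1$, the meridian of $V$ maps to a generator of $H_1(W)$, a group retraction $\pi_1(W)\to\pi_1(T)$ (hence a topological retraction $W\to T$) is available, and Theorem \ref{degreeone} and Corollary \ref{pre-essentialpullback} apply as before. If $w$ is even and nonzero, the companion torus $T$ is incompressible in $E(K^*)$, the inclusion sends $H_1(T;\Bbb{Z})$ onto the nonzero subgroup generated by $w$ times the meridian, and sends $H_1(T;\Bbb{Z}_2)$ to zero; hence Theorem \ref{large} yields the Neuwirth conjecture for $K^*$ outright, with the pre-essential surface built along the way. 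The remaining cases, $w$ odd with $|w|\ge3$ and $w=0$, are where the real work is: there I would try to extend a geometrically incompressible pre-essential (or Neuwirth) surface for $K$ across the pattern region $W$, disjoint from the incompressible torus $T$, to a properly embedded non-orientable surface in $E(K^*)$ that is algebraically boundary incompressible, and conclude via Lemma \ref{pre-essential} and Theorem \ref{homology}; keeping control of boundary incompressibility inside $W$ is the delicate point.
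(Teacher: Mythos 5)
Your proposal correctly reproduces the paper's intended argument for the connected sum case and for the reduction to Theorem \ref{degreeone} and Corollary \ref{pre-essentialpullback}, but more importantly you have put your finger on a genuine gap in the paper's own proof of the satellite case. The paper asserts that ``there is clearly a degree one map from a satellite knot $K^*$ to the knot $K$,'' and the whole corollary rests on this. As you observe, that assertion is false in general: the paper itself records, just before Theorem \ref{boundary slopes}, that knot genus is monotone under proper degree one maps, so a degree one map $E(K^*)\to E(K)$ forces $g(K^*)\ge g(K)$; yet an untwisted Whitehead double of a companion of genus $\ge 2$ is a satellite of genus $1$. So the paper's proof is internally inconsistent, and your Thurston-norm objection is exactly right.

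Your partial repair is also sound as far as it goes: the connected sum can be viewed as a winding number $\pm 1$ satellite, and there the retraction $W\to T$ exists by abelianising and using that $T$ is aspherical; for nonzero even winding number the companion torus $T$ has $H_1(T;\Bbb Z)\to H_1(E(K^*);\Bbb Z)$ with image $w\Bbb Z\ne 0$ while the mod $2$ image vanishes, so Theorem \ref{large} applies directly (and its proof produces both the Neuwirth surface and the pre-essential surface, as you note). But you candidly leave $w=0$ and odd $|w|\ge 3$ open, and I do not see an argument in the paper, or an easy one, that closes them. For odd $|w|\ge 3$ the mod-$2$ hypothesis of Theorem \ref{large} fails, and for $w=0$ the integral hypothesis fails; in particular for $w=0$ the torus is algebraically split, and the remark after Theorem \ref{large} only delivers a Seifert surface disjoint from $T$, not a Neuwirth or pre-essential surface. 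Your suggested strategy of extending a pre-essential surface of $K$ across $W$ is plausible in spirit, but controlling boundary incompressibility inside $W$ is, as you say, the delicate point, and nothing in the paper carries it out. So the corollary as stated should currently be regarded as proved only for connected sums, for satellites of winding number $\pm 1$, and for satellites of nonzero even winding number; the general satellite case is not established by the argument given in the paper, and your proposal, while more careful, is also incomplete there.

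Two small additional remarks. First, in the connected sum case the target of the retraction should land on the meridional annulus inside $\partial E(K)$, and you should check that the resulting map is proper (sends $\partial E(K\#K')$ into $\partial E(K)$); this works out, but deserves a line. Second, your use of Theorem \ref{large} for $w$ even nonzero implicitly assumes $T$ is incompressible in $E(K^*)$, which is part of the definition of satellite, so that is fine.
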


\begin{proof}
By Theorem \ref{degreeone} since there is clearly a degree one map from a satellite knot $K^*$ to the knot $K$, if $K$ satisfies the Neuwirth conjecture then so does $K^*$. Similarly by Corollary \ref{pre-essentialpullback}, if $K$ has a pre-essential surface then so does the satellite knot $K^*$ using the degree one map $K^* \to K$. 

The argument for a sum $K \# K^\prime$ is similar. We just observe that there is a degree one map $K \# K^\prime \to K$. 
\end{proof}

It is a fundamental problem whether a topological invariant becomes ``smaller'' under a degree one map between knots (\cite[Question 3.1]{W}).
It is known that the rank of the fundamental group, Gromov's simplicial volume, the Haken number of incompressible surfaces, the knot genus, the homology group and the Alexander polynomial behave in this way under degree one maps.
The next Theorem \ref{boundary slopes} is a new addition to this list of invariants and it can be used to determine whether there does not exist a degree one map between two knots, using the calculation of boundary slopes of knots in \cite{HT2}, \cite{HO}, \cite{D} and recently \cite{DG}.

\begin{theorem}\label{boundary slopes}
If there exists a degree one map $(S^3,K)\to (S^3,K')$, then the set of boundary slopes of $K$ includes the set of boundary slopes of $K'$.
\end{theorem}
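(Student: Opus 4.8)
The plan is to use the same surgery-of-maps technique already exploited in the proofs of Theorem \ref{large} and Theorem \ref{degreeone}. Let $r$ be a boundary slope of $K'$, realised by a properly embedded, orientable, incompressible and boundary incompressible surface $G' \subset E(K')$ with boundary slope $r$. Suppose $\phi \colon (S^3,K) \to (S^3,K')$ is a degree one map. First I would homotop $\phi$ so that its restriction to the peripheral torus $\partial E(K) \to \partial E(K')$ is a homeomorphism; as noted in the proof of Theorem \ref{degreeone}, the induced map on the boundary tori is degree one, so such a homotopy is available and it carries the meridian, longitude pair of $K$ to that of $K'$. In particular the slope $r$ on $\partial E(K')$ pulls back to the slope $r$ on $\partial E(K)$.

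Next I would make $\phi$ transverse to $G'$, so that $F_0 = \phi^{-1}(G')$ is a properly embedded compact orientable surface in $E(K)$, each of whose boundary curves lies at slope $r$ on $\partial E(K)$. The number of boundary curves of $F_0$ equals the number for $G'$ (since $\phi$ restricts to a homeomorphism on the peripheral tori), and by the degree-one argument already used in Theorem \ref{degreeone}, no component of $F_0$ can carry all its boundary in a way that contradicts the homology count: there is a component $F_1$ of $F_0$ whose boundary is nonempty and lies at slope $r$. (If $G'$ has a single boundary curve this is immediate; if it has more, the argument separating the "two boundary curves on one component" case in Theorem \ref{degreeone} applies verbatim.) Now run Stallings' surgery on the map $\phi$: whenever the current inverse-image surface has a compressing or boundary-compressing disk, a homotopy of $\phi$ replaces that surface by the result of compressing along the disk, and after finitely many steps one arrives at a homotopy of $\phi$ for which some component $F$ of $\phi^{-1}(G')$ is incompressible and boundary incompressible, still with all boundary curves at slope $r$ and boundary nonempty. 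Thus $r$ is a boundary slope of $K$.

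The only delicate point — and the step I expect to be the main obstacle — is ensuring that the final surgered surface $F$ still has nonempty boundary, i.e. that the surgery process does not cap off all the boundary curves and leave only a closed (hence, in $S^3$, separating and potentially inessential) surface or the empty surface. This is handled by the homological bookkeeping: the boundary curves of $G'$ are at a non-longitudinal slope and hence are homologically essential on $\partial E(K')$ in a way detected by $H_1(E(K'),\partial E(K'))$ or by intersection with a meridian disk pushed off; since $\phi$ is degree one, the corresponding intersection number is preserved, so the inverse image surface must retain its boundary throughout the surgeries. (When $r$ is the longitudinal slope the statement is about Seifert surfaces and is classical, following from the fact that a degree one map pulls a Seifert surface back to one containing a Seifert surface of $K$; alternatively one simply notes every knot has a boundary slope equal to the longitude, so that case is trivial.) Assembling these observations shows that every boundary slope of $K'$ is a boundary slope of $K$, which is the claim.
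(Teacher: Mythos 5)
Your proposal matches the paper's proof in both strategy and substance: the paper's own argument simply cites the methods of Theorem \ref{degreeone} and Corollary \ref{pre-essentialpullback} (pull back the essential surface under the degree one map, which can be taken to be a homeomorphism on peripheral tori, and surger the map in the style of Stallings until the preimage is essential), and your write-up spells out exactly these steps. One small simplification you could have made: since the Stallings surgeries keep $\phi$ fixed on $\partial E(K)$, the boundary $\phi^{-1}(\partial G')$ never changes, so the nonemptiness of the boundary is automatic and the separate homological bookkeeping you flag as the delicate point is not actually needed.
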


\begin{proof}
By the methods of Theorem \ref{degreeone} and Corollary \ref{pre-essentialpullback}, if there is a properly embedded orientable incompressible and boundary incompressible surface with a given boundary slope in $E(K')$, then by pulling this back under the degree one map and performing surgery of the map, we obtain a similar surface in $E(K)$ with the same boundary slope. Hence the set of boundary slopes of $K'$ is a subset of the set of boundary slopes of $K$.
\end{proof}

\begin{remark}
In \cite[Corollary 2.5]{HS}, a similar result was obtained for 2-bridge knots and epimorphisms.
\end{remark}

\begin{corollary}
If there exists a degree one map $(S^3,K)\to (S^3,K')$ and $K'$ satisfies the (strong) even boundary slope conjecture, then $K$ also satisfies the (strong) even boundary slope conjecture.
\end{corollary}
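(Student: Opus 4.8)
The plan is to reuse the map-surgering argument already developed for Theorem \ref{degreeone}, Corollary \ref{pre-essentialpullback} and Theorem \ref{boundary slopes}, and then to check one extra point, namely that the resulting surface is still ``not a Seifert surface''.

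Suppose $K'$ satisfies the even boundary slope conjecture, and let $G'$ be a properly embedded orientable incompressible and boundary incompressible surface in $E(K')$, which is not a Seifert surface, with boundary slope an even rational number $r$ (for the strong version one takes $r$ an even integer; the rest of the argument is word for word the same). Let $\phi\colon(S^3,K)\to(S^3,K')$ be a degree one map. First I would homotop $\phi$, exactly as in the proof of Theorem \ref{degreeone}, so that it restricts to a homeomorphism $\partial E(K)\to\partial E(K')$ taking the meridian--longitude pair of $K$ to that of $K'$, and so that $\phi$ is transverse to $G'$. Then $\phi^{-1}(G')$ is a properly embedded orientable surface whose boundary lies on $\partial E(K)$ at slope $r$, since a degree one map preserves the meridian--longitude pair, so an even rational slope pulls back to the same even rational slope and an even integer to the same even integer. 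Surgering $\phi$ along compressing disks in the style of Stallings, as in the proofs of Theorem \ref{degreeone} and Theorem \ref{boundary slopes}, after finitely many homotopies I would reach a component $G$ of $\phi^{-1}(G')$ that is incompressible and boundary incompressible in $E(K)$, with boundary a collection of slope-$r$ curves on $\partial E(K)$; this is essentially the conclusion of Theorem \ref{boundary slopes}.

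The one remaining point, and the only place anything could go wrong, is to see that $G$ is not a Seifert surface. If $r\ne 0$ this is immediate, since a Seifert surface has zero (longitudinal) boundary slope. If $r=0$, then $G'$ must have at least two boundary curves, a connected orientable incompressible surface with a single longitudinal boundary curve being by definition a Seifert surface, and I would argue, as in the proof of Theorem \ref{degreeone}, that $G$ inherits all of them. Writing $C_1,\dots,C_k$ with $k\ge 2$ for the boundary curves of $G'$ and $C_1',\dots,C_k'$ for their homeomorphic preimages on $\partial E(K)$, any component $\Gamma$ of $\phi^{-1}(G')$ satisfies $0=\phi_*[\partial\Gamma]=\sum_j[C_j]$ in $H_1(G')$, the sum being over the boundary curves lying in $\Gamma$; since the only relation among $[C_1],\dots,[C_k]$ in $H_1(G')$ is $[C_1]+\dots+[C_k]=0$, that partial sum must be empty or total. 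Hence every non-closed component of $\phi^{-1}(G')$ carries all $k$ boundary curves, and this survives the surgering of $\phi$, each step being a compression which fixes the boundary while the same homological argument forbids splitting off a proper subset of the $C_j'$. So $G$ has $k\ge 2$ boundary curves and is not a Seifert surface. This completes the proof: $G$ is the surface required by the (strong) even boundary slope conjecture for $K$. I do not expect any real obstacle beyond this bookkeeping, since all the analytic work is already contained in Theorem \ref{boundary slopes}.
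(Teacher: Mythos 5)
The paper states this corollary with no explicit proof, leaving it to follow from Theorem \ref{boundary slopes}. Your argument reconstructs exactly what that implicit proof must be -- pull $G'$ back under $\phi$ and surger the map in the style of Stallings, as in Theorem \ref{degreeone} -- so you are on the same route as the authors, not a different one.

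What you do \emph{additionally} is isolate and verify the one condition that Theorem \ref{boundary slopes} alone does not obviously deliver: that the resulting surface is ``not a Seifert surface.'' Theorem \ref{boundary slopes} only promises a properly embedded orientable incompressible and boundary incompressible surface at the same slope as $G'$. When $r\neq 0$ the conclusion is automatic; when $r=0$ (the longitude, which is an even slope under the reduced-fraction reading used here) one must check that the pullback is not Seifert. Your homological argument -- that the null-homology of $\partial\Gamma$ in $H_1(\Gamma)$ pushes forward to a relation $\sum_{j\in J}\pm[C_j]=0$ in $H_1(G')$, whose only solutions are $J=\emptyset$ or $J=\{1,\dots,k\}$, so every non-closed component of the preimage carries all $k$ boundary curves, and this persists under each surgery step for the same reason -- is a direct extension of the argument the paper already uses inside the proof of Theorem \ref{degreeone} (in the case $k=2$). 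So your proof is correct, matches the intended approach, and supplies a legitimate detail the paper suppresses.

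One small caveat worth noting: your claim that ``the only relation among $[C_1],\dots,[C_k]$ in $H_1(G')$ is $[C_1]+\dots+[C_k]=0$'' is correct for a connected compact orientable surface, but the relation you actually need to rule out has signs, $\sum_{j\in J}\epsilon_j[C_j]=0$ with $\epsilon_j=\pm 1$, since $\phi$ restricted to each boundary circle may be orientation reversing. This does not change the conclusion -- any $k-1$ of the $[C_j]$ are a free basis, so a signed sum over a proper nonempty $J$ is never a multiple of the full relator -- but it is worth stating explicitly, since in the case $k=2$ the argument reduces to $[C_1]-[C_2]=2[C_1]\neq 0$ (using freeness of $H_1$) rather than to $[C_1]+[C_2]=0$.
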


\begin{corollary}\label{simple2}
If $K$ satisfies the (strong) even boundary slope conjecture, then any satellite knot or sum of a knot with $K$ also satisfies the (strong) even boundary slope conjecture.
\end{corollary}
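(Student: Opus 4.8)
The plan is to deduce this in exactly the way Corollary~\ref{simple} was deduced from Theorem~\ref{degreeone}: combine the Corollary immediately preceding this one (the (strong) even boundary slope conjecture is inherited under degree one maps) with the existence of suitable degree one maps. Concretely, the only two inputs needed are
\begin{enumerate}
\item for a satellite knot $K^*$ with companion $K$, a proper degree one map $(S^3,K^*)\to(S^3,K)$;
\item for a connected sum $K\#K'$, a proper degree one map $(S^3,K\#K')\to(S^3,K)$.
\end{enumerate}
Granting (1) and (2), applying the preceding Corollary with $K$ in the role of the target immediately gives that $K^*$ and $K\#K'$ satisfy the (strong) even boundary slope conjecture whenever $K$ does.

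For map (1), I would use the standard collapse of the pattern to the core: writing $E(K^*)=(S^3\setminus \mathring V)\cup (V\setminus \mathring N(K^*))$, where $V$ is the companion solid torus with core $c$ representing $K$, define $\phi$ to collapse $V\setminus \mathring N(K^*)$ onto $V\setminus \mathring N(c)$ by the pattern-to-core degree one map and to be the identity on $S^3\setminus \mathring V$. The image is $S^3\setminus \mathring N(c)=E(K)$, and one checks $\phi$ is proper and induces an isomorphism on $H_3(\cdot,\partial\cdot)$, so it has degree one; this is precisely the map already used implicitly in Corollary~\ref{simple}. For map (2), one uses the decomposing $2$-sphere meeting $K\#K'$ in two points and collapses the $K'$-summand ball to an unknotted arc, again obtaining a proper degree one map onto $(S^3,K)$. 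Neither construction is new; they are the same degree one maps invoked in Corollary~\ref{simple}.

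I do not expect a genuine obstacle here; the only point that needs a word is that the clause ``not a Seifert surface'' survives the pullback. But this is built into the degree one machinery of Theorem~\ref{degreeone} and Theorem~\ref{boundary slopes}: after a homotopy a degree one map restricts to a homeomorphism of the peripheral tori carrying the meridian--longitude pair to the meridian--longitude pair, hence it preserves boundary slopes exactly, and it sends an essential surface with two boundary curves at an even slope to a surgered essential surface with two boundary curves at the same even slope, which is never a Seifert surface. (One also notes en passant that satellites and connected sums are automatically non-torus knots, so the hypothesis of the conjecture is met.) Thus the corollary is a formal consequence of the degree one results already established, and the only real task is to write down the two degree one maps and verify their degree, exactly as for Corollary~\ref{simple}.
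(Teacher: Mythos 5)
Your proof is correct and follows the same route as the paper, which likewise deduces Corollary~\ref{simple2} from the immediately preceding corollary on degree one maps together with the degree one maps $K^*\to K$ and $K\#K'\to K$ already invoked in Corollary~\ref{simple}. One tiny slip in your closing parenthetical: a connected sum $K\#K'$ is never prime, so the hypothesis of the conjecture is \emph{not} literally met there, but this is immaterial since the corollary is about the conclusion of the conjecture holding, not its primality hypothesis.
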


\begin{proof}
The proof follows along the same lines as Corollary \ref{simple}.
\end{proof}

\begin{example}
Take a $2$-component boundary link $L \subset S^3$ with the property that an arc $\lambda$ between the two components of $L$ can be found which misses a choice of two disjoint spanning surfaces for the components of $L$. We also assume that the handlebody $Y = N(L \cup \lambda)$ is knotted, i.e $\partial Y$ is incompressible in $\overline{S^3 \setminus Y}$. Next pick any knot $K$ embedded in $Y$ which is disk busting in $Y$. We claim that the knot $(S^3, K)$ has a degree one map to the knot $(S^3,K')$ where the pair $(Y',K')$ is homeomorphic to the pair $(Y,K)$ and $Y' \subset S^3$ is unknotted. 

The reason is that the two disjoint spanning surfaces in $\overline{S^3 \setminus Y}$ can be mapped to disks and then the closure of the complement of these spanning surfaces mapped to a ball, so that there is a degree one map $\overline{S^3 \setminus Y} \to Y_1$ which is a genus $2$ handlebody with meridian disks with the same boundaries as the spanning surfaces. This map clearly extends to a degree one map $E(K) \to E(K')$ as claimed. 
\end{example}

\subsection{Pretzel surfaces and checkerboard surfaces}

A pretzel knot or link $P(p_1,\ldots,p_n)$ bounds a checkerboard surface which consists of two disks and $p_i$ half-twisted bands, and we call it a {\em pretzel surface}.
The following theorem determines which pretzel surfaces are algebraically incompressible and boundary incompressible.

\begin{theorem}\label{pretzel}
Let $K$ be a pretzel knot or link $P(-p_1,p_2,\ldots,p_n)$ with $p_i\ge 2$ and $n\ge 3$.
Then the pretzel surface for $K$ is algebraically incompressible and boundary incompressible if and only if $(-p_1,p_2,\ldots,p_n)\ne (-2,3,3)$, $(-2,3,4)$, $(-2,3,5)$ nor $(-2,2,p_3)$, where $p_3$ is odd.
\end{theorem}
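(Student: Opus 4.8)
The plan is to analyze the pretzel surface $F = d_1 \cup d_2 \cup b_1 \cup \cdots \cup b_n$ (two disks joined by $n$ half-twisted bands, where band $b_1$ carries $-p_1$ twists and $b_i$ carries $p_i \ge 2$ twists) via its associated state graph $G$, which here is a "theta-like" graph: two vertices $v_1, v_2$ joined by $n$ parallel edges $e_1, \ldots, e_n$ with signs $(-,+,\ldots,+)$. First I would recall that the pretzel surface is a state surface for the alternating-type state $\sigma$ on the standard pretzel diagram. Since $n \ge 3$ and $G$ has no loops, $D$ is $\sigma$-adequate. However $G$ is \emph{not} $\sigma$-homogeneous, since the single block $G$ itself contains both a $-$ edge and $+$ edges, so Theorem \ref{state} does not apply directly. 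Hence the core of the argument must be a direct, hands-on analysis rather than an appeal to the homogeneous case.

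The main tool I would use is the incompressibility criterion for state surfaces from \cite{O2} together with a careful examination of the surface's "double curve" combinatorics. Concretely, a compressing or $\partial$-compressing disk for $F$ meets $F$ in a collection of arcs, and pushing these into the checkerboard structure one translates the existence of such a disk into the existence of a certain \emph{closed curve or arc} in the complementary (dual) checkerboard surface, or equivalently into a loop in $G$ that bounds in a suitable sense. For the pretzel surface this becomes a concrete finite combinatorial question: when $n \ge 3$ and all $|p_i| \ge 2$ with mixed signs forbidden, the only way a compression can occur is if two of the bands "cancel," which forces either $p_i = \pm 1$ (excluded) or one of the small exceptional configurations. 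I would organize the argument so that the "if" direction (that the surface \emph{is} essential when none of the listed exceptions occurs) is proved by this combinatorial/normal-arc analysis, establishing that every innermost intersection arc of a hypothetical compressing disk can be removed.

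For the "only if" direction I would exhibit, for each of the excluded tuples, an explicit compression or $\partial$-compression. The cases $(-2,2,p_3)$ with $p_3$ odd are the easiest: when two bands carry $+2$ and $-2$ twists respectively, they combine to give an obvious compressing disk (or the surface becomes visibly compressible because the link is then a connected sum / the $2$-twist regions cancel), and the parity of $p_3$ controls whether $K$ is a knot and whether the relevant curve is essential on $F$. The three sporadic tuples $(-2,3,3)$, $(-2,3,4)$, $(-2,3,5)$ are exactly the pretzel knots whose exteriors are small Seifert-fibered or otherwise special (these are the famous exceptional Montesinos/Brieskorn examples), and for these I would either cite the known classification of essential surfaces in their exteriors or directly draw the compressing disk in the standard diagram; in all three the pretzel surface turns out to be compressible or boundary-compressible.

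The hard part will be the "if" direction — ruling out \emph{all} compressing and boundary-compressing disks uniformly for the infinite family. The delicate point is that $G$ is not homogeneous, so one cannot black-box Theorem \ref{state}; instead one must run the Gabai-style sutured-manifold / innermost-disk argument by hand and check that the failure of homogeneity (the single $-$ band among $+$ bands) does not produce a compression precisely when $p_1 \ge 2$ and $n \ge 3$. I expect the cleanest route is to use Lemma \ref{deform} to convert the pretzel tangle $T(-1/p_1, \ldots)$-type data into a standard alternating-like form on a \emph{subsurface}, exhibit $F$ as a Murasugi sum of pieces each of which \emph{is} $\sigma$-homogeneous (bands of a single sign plus a correction term), apply Theorem \ref{checkerboard} and Theorem \ref{Murasugi sum} to those pieces, and show the gluing disk is chosen so essentiality is inherited — with the exceptional tuples being exactly the ones for which no such Murasugi decomposition into homogeneous essential pieces exists.
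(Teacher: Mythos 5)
Your proposal correctly identifies the central difficulty — the state graph of the pretzel surface is a single block containing both a $-$ edge and $+$ edges, so $\sigma$-homogeneity fails and Theorem \ref{state} cannot be invoked directly — but the route you outline for the ``if'' direction does not work, and it diverges substantially from what the paper actually does.

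The key problem is the proposed Murasugi decomposition. The pretzel surface for $P(-p_1,p_2,\ldots,p_n)$ has state graph equal to two vertices joined by $n\geq 3$ parallel edges of mixed sign. This graph has no cut vertex, so it is a single block and admits no nontrivial block decomposition; consequently the pretzel surface is \emph{not} a Murasugi sum of smaller state surfaces with homogeneous blocks, and there is no way to split off the lone negative band as a separate summand. Lemma \ref{deform} does not rescue this: it is a move on Montesinos \emph{tangle diagrams} that is used in Theorem \ref{Montesinos} to push a Montesinos knot into a form whose checkerboard surface \emph{deplumbs to} a pretzel surface; it presupposes that one already knows which pretzel surfaces are essential, which is precisely what Theorem \ref{pretzel} is being asked to supply. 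So your proposed chain of reasoning (deform, Murasugi-decompose, apply Theorems \ref{Murasugi sum} and \ref{checkerboard}) is circular in one place and combinatorially blocked in another, and in particular gives no mechanism that would single out $(-2,3,3)$, $(-2,3,4)$, $(-2,3,5)$, $(-2,2,p_3)$ as the exceptional tuples.

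The paper instead argues directly and geometrically, treating both directions at once. It takes $V_1=N(F)$, so that $S=\partial N(F)$ is a genus $n-1$ Heegaard surface, and intersects the complementary handlebody $V_2$ with the projection $2$-sphere to obtain $n$ disks $D_1,\ldots,D_n$. Cutting $V_2$ along these disks yields two $3$-balls carrying graphs $G_1,G_2$ with fat vertices the $D_i$ and edges from $K$, with explicit edge multiplicities determined by the $p_i$. A hypothetical compressing or boundary-compressing disk is taken with minimal intersection with $\bigcup D_i$; an outermost-disk argument forces a cut vertex in $G_i$, which given $p_i\geq 2$ forces $p_1=2$ and $n=3$ (Claim \ref{p_1}). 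Further outermost-arc bookkeeping forces $p_2\leq 3$ (Claim \ref{p_2}), and then passing to a genus-one Heegaard torus by cutting along the small disk $D_1$ yields $p_3\leq 5$ when $p_2=3$ (via winding-number counts of the constituent theta-curve) and the parity condition on $p_3$ when $p_2=2$ (via whether the $2$-cycle $k_2\cup k_3$ is null-homotopic on the Heegaard torus). These same low-genus models also exhibit the compressing/boundary-compressing disks in the exceptional cases, settling the ``only if'' direction concretely rather than by appeal to classification results. Your correct instinct that a direct innermost-arc analysis is needed should thus be aimed at graphs on $3$-balls coming from a Heegaard splitting of $(S^3,K)$, not at a Murasugi or sutured-manifold decomposition.
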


\begin{proof}
Suppose that $K$ does not bound an algebraically incompressible and boundary incompressible pretzel surface $F$.
By taking a regular neighbourhood of $F$, we obtain a genus $n-1$ Heegaard splitting $V_1\cup_S V_2$, where $V_1=N(F)$ and $S=\partial N(F)$.
We assume that $V_1$ intersects the 2-sphere $S^2$, which includes a $(-p_1,p_2,\ldots,p_n)$-pretzel diagram of $K$, in a $n$-punctured sphere.
Thus $V_2$ intersects $S^2$ in $n$ disks $D_1,D_2,\ldots,D_n$, where $D_1$ is sandwiched between the $-p_1$ half-twisted band and the $p_2$ half-twisted band, $D_2$ is sandwiched between the $p_2$ half-twisted band and the $p_3$ half-twisted band, and $D_n$ is an outside disk between the $p_n$ half-twisted band and the $-p_1$ half-twisted band.
The knot $K$ can be slightly isotoped onto the genus $n-1$ Heegaard surface $S$ so that $|K\cap \partial D_1|=(p_1-1)+(p_2-1)$, $|K\cap \partial D_2|=p_2+p_3$ and $|K\cap \partial D_n|=(p_n-1)+(p_1-1)$.
See Figure \ref{(-2,3,5)-pretzel} for $K=P(-2,3,5)$.

\begin{figure}[htbp]
	\begin{center}
	\includegraphics[trim=0mm 0mm 0mm 0mm, width=.6\linewidth]{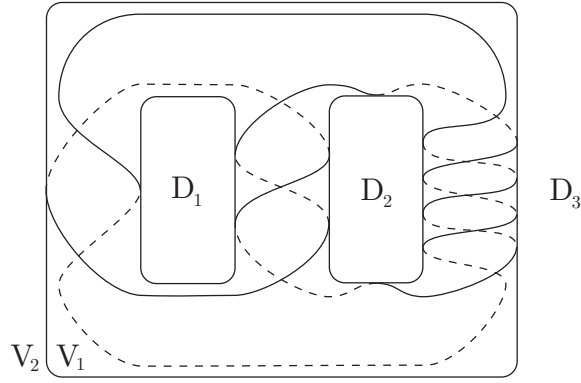}
	\end{center}
	\caption{$(-2,3,5)$-pretzel knot on a genus two Heegaard surface $S$}
	\label{(-2,3,5)-pretzel}
\end{figure}

If we cut $V_2$ along $D_1\cup D_2\cup\cdots\cup D_n$, then we have two 3-balls $B_1$ and $B_2$.
On $\partial B_i$, we have a graph $G_i$ with fat vertices as copies of $D_1,D_2,\ldots,D_n$ and edges from $K\cap \partial B_i$.
Figure \ref{graph} illustrates the graph $G_i$ for $K=P(-p_1,p_2,\ldots,p_n)$, where the number indicated at each edge denotes the number of multiple edges.

\begin{figure}[htbp]
	\begin{center}
	\includegraphics[trim=0mm 0mm 0mm 0mm, width=.6\linewidth]{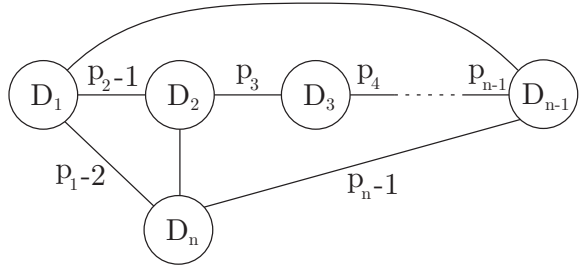}
	\end{center}
	\caption{The graph $G_i$ for $K=P(-p_1,p_2,\ldots,p_n)$}
	\label{graph}
\end{figure}

\begin{claim}\label{p_1}
$p_1=-2$ and $n=3$.
\end{claim}

\begin{proof}
Let $D$ be a compressing disk for $S-K$.
We assume that $|D\cap (D_1\cup D_2\cup\cdots\cup D_n)|$ is minimal up to isotopy of $D$.
If $|D\cap (D_1\cup D_2\cup\cdots\cup D_n)|=0$, then $D$ is parallel to some $D_i$ in $V_2$.
However, this is not possible since each $\partial D_i$ intersects $K$ at least two points.
Hence there exists an outermost disk $\delta$ in $D$ with respect to $D\cap (D_1\cup D_2\cup\cdots\cup D_n)$.
Then the boundary of $\delta$ shows that a graph $G_i$ has a cut vertex.
Since $p_i\ge 2$, it follows that $p_1-2=0$ and $n=3$.
In this case, $D_2$ is a cut vertex.
\end{proof}

Without loss of generality, we may assume that $p_2\le p_3$.

\begin{claim}\label{p_2}
$p_2\le 3$.
\end{claim}

\begin{proof}
We remark that there is a possibility for outermost disks in $D$ with respect to $D\cap (D_1\cup D_2\cup D_3)$ to be in both sides of $B_1$ and $B_2$.
However, if $p_2\ge 4$, then any outermost disk $\delta$ in $D$ with respect to $D\cap (D_1\cup D_2\cup D_3)$ is contained in only one side of $B_1$ and $B_2$ and hence is unique.
In this case, the next arc to any outermost arc is contained in $D_1$ or $D_3$.
Therefore, any two outermost arcs cannot be next to each other.
Moreover, the second outermost arc parallel to an outermost arc cannot exist since if one end of it is contained in $D_1$, then it follows that the another end is contained in $D_3$.
\end{proof}

\begin{claim}\label{p_2=3}
If $p_2=3$, then $p_3\le 5$.
\end{claim}

\begin{proof}
Since $p_1=-2$ and $p_2=3$, it follows that $|K\cap \partial D_1|=3$.
Therefore, by exchanging $D$ if necessary, there exists a compressing or boundary compressing disk $D$ which is disjoint from $D_1$.
We cut $V_2$ along $D_1$, and obtain a solid torus, say $V_2'$, and the rest is a solid torus $V_1'$.
On the Heegaard torus $S'=\partial V_i'$, we have a theta-curve graph with two fat vertices as two copies $D_1', D_1''$ of $D_1$ and three edges $k_1,k_2,k_3$ as in Figure \ref{(-2,3,5)-theta}.

\begin{figure}[htbp]
	\begin{center}
	\includegraphics[trim=0mm 0mm 0mm 0mm, width=.6\linewidth]{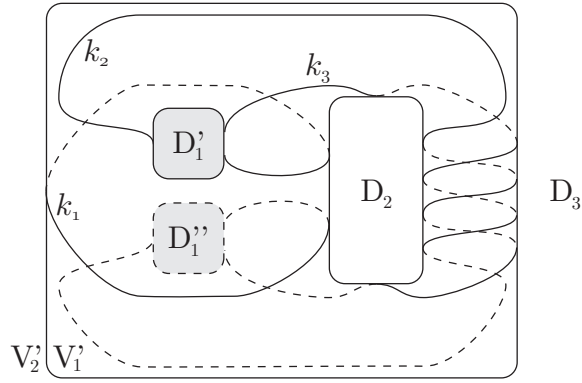}
	\end{center}
	\caption{A theta curve graph on a Heegaard torus $S'$}
	\label{(-2,3,5)-theta}
\end{figure}

If $p_3\ge 6$, then the constituent knot $k_1\cup k_3$ goes around $V_2'$ at least twice.
However this shows that $\partial D$ intersects $k_1\cup k_3$ in at least two points, and contradicts that $D$ is a compressing or boundary compressing disk.

Otherwise, if $p_3\le 5$, then there exists a compressing or boundary compressing disk for $S'-(k_1\cup k_2\cup k_3)$ in $V_2'$.
\end{proof}

\begin{claim}\label{p_2=2}
If $p_2=2$, then $p_3$ is odd.
\end{claim}

\begin{proof}
Since $p_1=-2$ and $p_2=2$, it follows that $|K\cap \partial D_1|=2$.
Therefore, by exchanging $D$ if necessary, there exists a compressing or boundary compressing disk $D$ which is disjoint from $D_1$.
As in the previous claim, we cut $V_2$ along $D_1$, and obtain a solid torus, say $V_2'$, and the rest is a solid torus $V_1'$.
On the Heegaard torus $S'=\partial V_i'$, we have a loop corresponding to $k_1$ and a $2$-cycle with two fat vertices as two copies $D_1', D_1''$ of $D_1$ and two edges $k_2,k_3$ similarly as in Figure \ref{(-2,3,5)-theta}.
We note that the loop $k_1$ goes around $V_2'$ exactly once.

If $p_3$ is even, then the $2$-cycle $k_2\cup k_3$ is parallel to the loop $k_1$ in the Heegaard torus $S'$, hence it also goes around $V_2'$ exactly once.
Therefore, in this case, $\partial D$ intersects $k_1\cup k_2\cup k_3$ in at least two points, and contradicts that $D$ is a compressing or boundary compressing disk.

Otherwise, in the case that $p_3$ is odd, the $2$-cycle $k_2\cup k_3$ bounds a disk in the Heegaard torus $S'$ and this shows that $S'-(k_1\cup k_2\cup k_3)$ is compressible in $V_2'$.
\end{proof}

The above claims show that if the pretzel surface for $K$ is not algebraically incompressible and boundary incompressible, then $(-p_1,p_2,\ldots,p_n)= (-2,3,3)$, $(-2,3,4)$, $(-2,3,5)$ or $(-2,2,p_3)$, where $p_3$ is odd.
Conversely, if $(-p_1,p_2,\ldots,p_n)= (-2,3,3)$, $(-2,3,4)$, $(-2,3,5)$ or $(-2,2,p_3)$, where $p_3$ is odd, then there exists a compressing or boundary compressing disk for $S-K$ in $V_2$ as shown in Claims \ref{p_2=3} and \ref{p_2=2}.
This completes the proof of Theorem \ref{pretzel}.
\end{proof}

Theorem \ref{pretzel} on pretzel surfaces can be generalized to checkerboard surfaces as follows.
Let $G$ be a planar graph in the 2-sphere $S^2$ whose edges $e_1,\ldots,e_n$ have weights $w_1,\ldots,w_n \in \Bbb{Z}$.
We replace each vertex $v_i$ of $G$ with a disk $d_i$, and replace each edge $e_i$ with a $w_i$ half-twisted band $b_i$.
Then we obtain a surface $F_G$ and a knot or link $K_G=\partial F_G$.
Naturally, $K_G$ has a diagram on $S^2$ and $F_G$ can be regarded as a checkerboard surface for $K_G$.
We remark that any knot or link can be obtained from a suitable weighted planar graph in this manner.
For example, a pretzel knot or link $P(p_1,\ldots,p_n)$ can be obtained as $K_G$ from a theta-curve graph $G$ whose multiple edges $e_1,\ldots,e_n$ have weights $p_1,\ldots,p_n$, and the pretzel surface coincides with $F_G$.

\begin{theorem}\label{checkerboard}
Let $G$ be a 2-connected planar graph in $S^2$ with edges $e_1,\ldots,e_n$ having weights $w_1,\ldots,w_n \in \Bbb{Z}$.
\begin{enumerate}
\item If $|w_i|\ge 3$ for all $i$, then the surface $F_G$ is algebraically incompressible and boundary incompressible.
\item If $w_1\le -2$ and $w_i\ge 2$ for $i=2,\ldots,n$, then $G$ has multiple two parallel edges $e_1$, say $e_2$, such that $w_1=-2$ and $w_2=2$ or $3$.
\end{enumerate}
\end{theorem}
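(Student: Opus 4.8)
The plan is to deduce Theorem~\ref{checkerboard} from Theorem~\ref{pretzel} by reducing a general $2$-connected weighted planar graph $G$ to the theta-curve (pretzel) case already handled. The mechanism is the same as for pretzel surfaces: taking the regular neighbourhood $V_1 = N(F_G)$ and $V_2 = \overline{S^3 \setminus V_1}$ gives a Heegaard splitting $V_1 \cup_S V_2$, where $V_2$ meets the projection sphere $S^2$ in one disk $D_f$ for each face $f$ of $G$. Isotope $K_G$ slightly onto $S$ so that, as in the pretzel proof, $|K_G \cap \partial D_f|$ is controlled by the weights on the edges bounding $f$: each edge $e_i$ of weight $w_i$ contributes $|w_i|-1$ arcs to each of the two faces it separates, together with the extra combinatorial contributions at the vertices. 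A compressing or boundary-compressing disk $D$ for $S - K_G$ with $|D \cap \bigcup_f D_f|$ minimal then produces, via an outermost disk $\delta$, an arc on some $\partial D_f$ whose endpoints lie on edges $e_i, e_j$ of that face; this arc gives a cut vertex (for part (1)) or a parallel pair of bigon edges (for part (2)) in the cut-up graph $G_i$ on $\partial B_i$.

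For part (1): suppose $|w_i| \ge 3$ for all $i$ and $F_G$ is not algebraically incompressible and boundary incompressible. Run the minimality argument above. Since $G$ is $2$-connected, the cut-up graphs have no cut vertex arising from the graph structure; every cut vertex in $G_i$ must come from a face $D_f$ with $|K_G \cap \partial D_f|$ small. But $|w_i| \ge 3$ forces each edge to contribute at least two arcs to $\partial D_f$, so $|K_G \cap \partial D_f|$ is at least twice the number of edges bounding $f$, hence at least $4$ (every face of a $2$-connected planar graph is bounded by $\ge 2$ edges, and if it is bounded by exactly $2$ parallel edges those would give the excluded bigon situation which still contributes $\ge 4$ arcs). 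This rules out a cut vertex entirely — exactly as Claim~\ref{p_1} forces $p_1 = -2$ in the pretzel case — so no such $D$ exists, and $F_G$ is algebraically incompressible and boundary incompressible.

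For part (2): assume $w_1 \le -2$, $w_i \ge 2$ for $i \ge 2$, and again that $F_G$ fails to be algebraically incompressible and boundary incompressible. The outermost-disk analysis now produces either a cut vertex in some $G_i$ or a bigon face $D_f$ bounded by two parallel edges $e_1, e_j$. A cut vertex is impossible by $2$-connectivity unless it is forced by a face with very few intersection arcs; tracing through, as in Claims~\ref{p_1}--\ref{p_2=2}, the only way this can happen given the sign and size constraints on the weights is that $e_1$ is one of a pair of parallel edges, the other being some $e_j =: e_2$, with $w_1 = -2$ and $w_2 \in \{2,3\}$ (the small-weight analysis of the resulting solid-torus picture is verbatim Claims~\ref{p_2=3} and \ref{p_2=2}, since cutting along the disk $D_1$ between the two parallel edges lands us in exactly the theta-curve-on-a-torus configuration of Figure~\ref{(-2,3,5)-theta}). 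Hence $G$ has two parallel edges $e_1, e_2$ with $w_1 = -2$, $w_2 = 2$ or $3$, which is the claim.

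The main obstacle is bookkeeping in the reduction from a general face $D_f$ to the theta-curve model: one must check that cutting $V_2$ along the disks dual to the faces really does reduce every local compressing/boundary-compressing disk to the situation analysed in the pretzel proof, and in particular that $2$-connectivity of $G$ (rather than the specific theta-curve combinatorics) is exactly what is needed to exclude cut vertices not coming from small faces. Once that structural point is pinned down, parts (1) and (2) follow by reusing Claims~\ref{p_1}--\ref{p_2=2} essentially word for word applied to the pair of parallel edges $\{e_1, e_2\}$.
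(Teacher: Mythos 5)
Your proposal follows the same strategy as the paper's own (quite brief) proof: form the Heegaard splitting from $N(F_G)$, cut $V_2$ along the disks dual to the faces of $G$, study the resulting cut-up graphs $G_i$ on $\partial B_i$, and run the outermost-disk analysis of Claims~\ref{p_1}--\ref{p_2=2} on the local picture. The paper states part~(1) directly as ``if $|w_i|\ge 3$ then $G_i$ is $2$-connected, hence no compressing disk'' rather than via your arc-counting on $\partial D_f$, but these are two phrasings of the same edge-multiplicity observation, so the approaches coincide.
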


\begin{proof}
We can prove Theorem \ref{checkerboard} by the argument similar to Theorem \ref{pretzel}, so we omit the details.
Similarly to the proof of Theorem \ref{pretzel}, we obtain a graph $G_i$ on the 3-ball $B_i$ and if the condition in (1) is satisfied, then $G_i$ is 2-connected.
This shows that there exists no compressing disk for $\partial N(F_G)-K_G$.

As in Claim \ref{p_1}, it follows from the existence of an outermost disk $\delta$ that $w_1=-2$ and there exists a region of $S^2-N(F_G)$ as $D_2$ containing the outermost arc.
Namely, $D_2$ can be enlarged into a region $D_2'$ of $S^2-G$ which contacts the two end points of $e_1$ but does not contacts the interior of $e_1$.

As in Claim \ref{p_2}, it follows from the existence of outermost disks in both sides of $B_1$ and $B_2$ that an edge adjacent to $e_1$, say $e_2$, must be parallel to $e_1$ and the weight $w_2$ is less than or equal to $3$.
Thus we obtain two multiple edges $e_1$ and $e_2$ satisfying the condition in (2).
\end{proof}

\begin{remark}
Theorem \ref{checkerboard} (2) shows that the diagram of $K_G$ contains a Montesinos tangle $MT(-1/2,1/2)$ or $MT(-1/2,1/3)$.
\end{remark}

\subsection{Montesinos knots}

\begin{theorem}\label{Montesinos}
Montesinos knots satisfy the strong Neuwirth conjecture.
\end{theorem}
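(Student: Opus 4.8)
The plan is to reduce the statement about an arbitrary Montesinos knot $K = M(r_1,\ldots,r_n)$ to the pretzel case already handled in Theorem \ref{pretzel}, using the notion of knot minor from Theorem \ref{Murasugi sum}. First I would dispose of the trivial cases: if $K$ is a torus knot it is excluded from the strong Neuwirth conjecture, and if $n \le 2$ then $K$ is a two-bridge knot (a rational knot), which already satisfies the Neuwirth conjecture by Theorem \ref{checkerboard} applied to a reduced alternating diagram; so assume $n \ge 3$ and $K$ is not a torus knot. Next, by Corollary \ref{simple} it suffices to treat simple (non-satellite) Montesinos knots, and in particular we may assume no $r_i$ is an integer (absorbing integer tangles changes nothing essential) and normalize so that $0 < |r_i| < 1$ for every $i$, with at least three tangles genuinely rational.

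The heart of the argument is to show that any such $K$ has a pretzel knot or link minor $P(-p_1,p_2,\ldots,p_n)$ with $p_i \ge 2$, $n \ge 3$, and $(-p_1,p_2,\ldots,p_n)$ avoiding the four exceptional families of Theorem \ref{pretzel}. Given each rational tangle $T_i$ with slope $r_i$, write its continued-fraction expansion and peel off, one band at a time, a plumbing Murasugi decomposition: a standard alternating rational tangle is built from twist regions, and the spanning surface of $T_i$ deplumbs along a sequence of $4$-punctured spheres into surfaces of the elementary twist tangles, i.e. into a portion of a pretzel surface. Each elementary piece is a checkerboard/pretzel surface with all twists of the same sign, hence algebraically incompressible and boundary incompressible by Theorem \ref{checkerboard}(1) or by the two-bridge case of Theorem \ref{checkerboard}; then Theorem \ref{Murasugi sum} assembles these pieces, together with the connecting bands of the Montesinos sum, into an algebraically incompressible and boundary incompressible spanning surface $F = F_1 * \cdots * F_m$ exhibiting a pretzel $P(-p_1,\ldots,p_n)$ as a minor of $K$. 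Here Lemma \ref{deform} is used to arrange signs: since $K$ is not a torus knot, not all $r_i$ have the same sign, so after the deformation $MT(-r_1,r_2) \mapsto MT(1-r_1, r_2-1)$ and its iterates we can present $K$ so that exactly one twist region contributes the negative block $-p_1$ and the remaining blocks are positive with $p_i \ge 2$. Finally, one must check that the four exceptional pretzels do not force themselves on us: if the only pretzel minor obtainable were $P(-2,3,3)$, $P(-2,3,4)$, $P(-2,3,5)$, or $P(-2,2,p_3)$ with $p_3$ odd, then either $K$ is itself one of these small pretzels — each of which can be checked by hand to satisfy the strong Neuwirth conjecture (they are alternating or $+$-adequate, or one applies Theorem \ref{checkerboard} to a different checkerboard surface) — or there is slack in some $r_i$ (a longer continued fraction, or $n > 3$) allowing a larger, non-exceptional block to be split off instead.

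The main obstacle I anticipate is precisely this last bookkeeping step: controlling the Murasugi decompositions of the rational tangles so that the resulting pretzel minor is genuinely non-exceptional, while simultaneously ensuring the surface $F$ is not a Seifert surface (so that the \emph{strong} conjecture, not merely the Neuwirth conjecture, is obtained — this is where one uses that a pretzel surface $F_G$ is non-orientable whenever some weight $p_i$ is even or $G$ has an odd cycle with odd weights, which one arranges). Orientability of the state/checkerboard surface must be tracked through each plumbing, since Theorem \ref{Murasugi sum} preserves incompressibility but the conclusion we want is about non-orientable spanning surfaces; the finitely many Montesinos knots whose every admissible checkerboard surface is orientable or exceptional will have to be enumerated and checked individually, and I expect these to coincide with small torus knots and the listed pretzels, for which the conclusion is already known from the itemized list in Section 1.
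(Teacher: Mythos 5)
Your high-level strategy is the right one -- reduce to Theorem \ref{pretzel} through Murasugi/minor decompositions, use Lemma \ref{deform} to rearrange signs, and worry about the exceptional pretzels at the end -- but the proposal has gaps in exactly the places you flag as ``obstacles,'' and your case structure differs substantially from the paper's (which is cleaner and avoids some of those obstacles entirely).

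The paper first normalizes all $|r_i| < 1$ and splits on $R_-$, the number of negative slopes. If $R_- = 0$ the knot is alternating, if $R_- \ge 2$ it is both $+$- and $-$-adequate, and if $R_- = 1$ but $K$ is not a positive knot then the $\sigma_+$-state surface is already non-orientable -- in each of these cases Theorem \ref{state} finishes the proof directly, with no pretzel minor needed and no danger of hitting an exceptional family. The hard case is isolated as $R_- = 1$ \emph{and} $K$ positive, where both checkerboard surfaces are automatically non-orientable (being state surfaces for states other than $\sigma_+$), and only there does the paper deplumb to the pretzel $P(-\lceil 1/r_1\rceil, \ldots, \lceil 1/r_n\rceil)$. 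You, by contrast, try to push every Montesinos knot into the pretzel-minor framework from the start, which is more than is true or needed: for $R_- \ge 2$ you cannot in general arrange a single negative twist region with all $p_i \ge 2$ after applications of Lemma \ref{deform}, and the paper does not attempt it.

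The more serious gap is your treatment of the exceptional families. You say that if the only obtainable minor is exceptional then ``either $K$ is itself one of these small pretzels'' or ``there is slack.'' Neither half is adequate. $P(-2,3,3)$ and $P(-2,3,5)$ are the $(3,4)$- and $(3,5)$-torus knots, excluded from the conjecture, so $K$ being one of these is already ruled out; and $P(-2,3,4)$, $P(-2,2,p)$ need a genuine argument. More importantly, $K$ is typically \emph{not} the small pretzel -- only its deplumbed minor is -- and ``slack'' must be cashed out. The paper does this through a four-way subcase analysis (whether $r_1 = 1/2$ or not in case (1), whether $r_2 = 1/2$ or not in case (2)), each requiring a different device: in (1)-(a) a state $\sigma$ negative everywhere except one crossing in the $-r_1$ tangle yields a minor that is a genus-one Seifert surface for $3_1$, $4_1$, or $5_2$; in (1)-(b) the deformation of Lemma \ref{deform} produces a minor $P(2,-2,4)$ or a plumbing from $P(2,-2,2)$ or $P(2,-2,4)$; in (2)-(a) an orientability/parity argument on the floor of $-r_2/(r_2-1)$ shows $p_2 \ge 3$; and in (2)-(b) connectivity of $K$ forces $r_1 \ne -1/2$ and hence $\lceil 1/(1-r_1)\rceil \ge 3$, reducing back to case (1). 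None of this is visible in your proposal, and it is precisely the content of the theorem -- the exceptional pretzels do ``force themselves on us'' and must be escaped case by case, not dismissed.
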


\begin{proof}
Let $K=M(r_1,\ldots,r_n)$ be a Montesinos knot with $n$ rational tangles of slope $r_i$.
We may assume that $r_i\ne 0/1, 1/0$ and $n\ge 3$ since a composite knot with $2$-bridge knot summands satisfies the strong Neuwirth conjecture.
Moreover, we may assume that $R_-\le R_+$, where $R_-$ denotes the number of negative $r_i$'s and $R_+$ denotes the number of positive $r_i$'s.
Suppose that $K$ is not a torus knot.
Hence, by \cite{BS}, $K\ne M(-1/2,1/3,1/3)$ nor $M(-1/2,1/3,1/5)$.
We put each rational tangle in a standard form.

If $R_-=0$, then $K$ is alternating and at least one of the two checkerboard surfaces is non-orientable, algebraically incompressible and boundary incompressible.

Hereafter, we assume that $R_-\ge 1$.
Under such a condition, we can take each slope $r_i$ so that $|r_i|<1$.

If $R_-\ge 2$, then $K$ is $+$-adequate and $-$-adequate.
Then, by Theorem \ref{state}, both of the state surfaces $F_{\sigma_+}$ and $F_{\sigma_-}$ are algebraically incompressible and boundary incompressible, and at least one of them is non-orientable.

Henceforth, we assume that $R_-=1$.
By exchanging $r_i$, we can put $K=M(-r_1,r_2,\ldots,r_n)$, where $r_i>0$ for $i=1,2,\ldots,n$.
Then $K$ is $+$-adequate, and the state surface $F_{\sigma_+}$ is algebraically incompressible and boundary incompressible.
Hence if $K$ is not positive, then $F_{\sigma_+}$ is non-orientable.

From now on, we assume that $K$ is positive.
Here, we remark that $K$ is positive if and only if $F_{\sigma_+}$ is orientable.
Therefore, both of the two checkerboard surfaces are non-orientable since they can be obtained as state surfaces except for the positive state $\sigma_+$.

\begin{figure}[htbp]
	\begin{center}
	\includegraphics[trim=0mm 0mm 0mm 0mm, width=.6\linewidth]{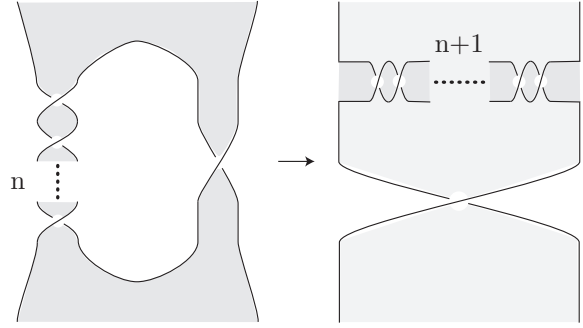}
	\end{center}
	\caption{Deplumbing a checkerboard surface}
	\label{deplumbing}
\end{figure}

By using a deformation as in Figure \ref{deplumbing} inductively, one of the two checkerboard surfaces for $K=M(-r_1,r_2,\ldots,r_n)$ can be deplumbed to a pretzel surface for a pretzel knot or link $K'=P(-\lceil 1/r_1 \rceil, \lceil 1/r_2 \rceil,\ldots, \lceil 1/r_n \rceil)$, where $\lceil x \rceil$ denotes the ceiling function of $x$ which is the smallest integer not less than $x$.

If $K'$ is neither $P(-2,3,3)$, $P(-2,3,4)$, $P(-2,3,5)$ nor $P(-2,2,p)$, where $p$ is odd, then the checkerboard surface for $K$ is non-orientable, algebraically incompressible and boundary incompressible.

Therefore, hereafter we may assume that $K=M(-r_1,r_2,r_3)$ and we need to consider the following two cases.

\begin{enumerate}
\item $K'=P(-2,3,3)$, $P(-2,3,4)$ or $P(-2,3,5)$.
\begin{enumerate}
\item $r_1\ne 1/2$.
\item $r_1=1/2$.
\end{enumerate}
\item $K'=P(-2,2,p)$, where $p$ is odd.
\begin{enumerate}
\item $r_2\ne 1/2$.
\item $r_2=1/2$.
\end{enumerate}
\end{enumerate}

In Case (1)-(a), let $\sigma$ be a state which has all negative signs except for one crossing in the rational tangle with slope $-r_1$.
Then, we have a non-orientable state surface $F_{\sigma}$ for $K$ as in Figure \ref{1-a} which can be obtained from a pretzel surface for $P(-1,3,3)$, $P(-1,3,4)$ or $P(-1,3,5)$.
Since the pretzel surface is a genus one Seifert surface for $3_1$, $4_1$ or $5_2$ up to mirror image respectively, $F_{\sigma}$ is algebraically incompressible and boundary incompressible.

\begin{figure}[htbp]
	\begin{center}
	\includegraphics[trim=0mm 0mm 0mm 0mm, width=.6\linewidth]{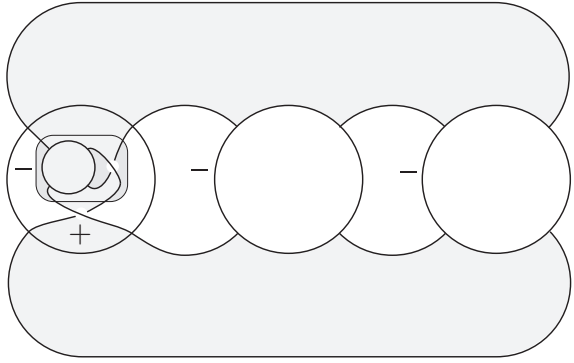}
	\end{center}
	\caption{A state surface $F_{\sigma}$ for $K=M(-r_1,r_2,r_3)$}
	\label{1-a}
\end{figure}

In Case (1)-(b), first we assume that $r_3\ne 1/3$, $1/4$ nor $1/5$.
By using Lemma \ref{deform}, we can deform the Montesinos knot $K=M(-1/2,r_2,r_3)$ into $K=M(1/2,r_2-1,r_3)$.

In the case that $K'=P(-2,3,4)$, then the deformed knot $K=M(1/2,r_2-1,r_3)$ has a minor $P(2,-2,4)$.
By Theorem \ref{pretzel}, the pretzel surface for $P(2,-2,4)$ is algebraically incompressible and boundary incompressible and hence the checkerboard surface for $K=M(1/2,r_2-1,r_3)$ is also algebraically incompressible and boundary incompressible.

Otherwise, for the case that $K'=P(-2,3,3)$ or $P(-2,3,5)$, let $\sigma$ be a state as in Figure \ref{1-b}.
Then, we have a non-orientable state surface $F_{\sigma}$ for $K$ which can be obtained by plumbings from a pretzel surface for $P(2,-2,2)$ or $P(2,-2,4)$.

\begin{figure}[htbp]
	\begin{center}
	\includegraphics[trim=0mm 0mm 0mm 0mm, width=.6\linewidth]{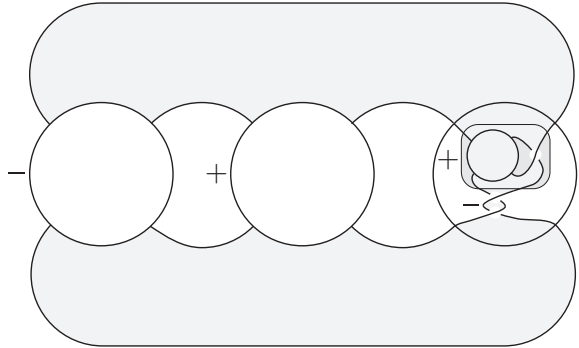}
	\end{center}
	\caption{A state surface $F_{\sigma}$ for $K=M(1/2,r_2-1,r_3)$}
	\label{1-b}
\end{figure}

In Case (2)-(a), by using Lemma \ref{deform}, we can deform the Montesinos knot $K=M(-r_1,r_2,r_3)$ into $K=M(1-r_1,r_2-1,r_3)$.
Then, one of the two checkerboard surfaces for $K=M(1-r_1,r_2-1,r_3)$ can be deplumbed into a pretzel surface for $P(p_1,-p_2,p_3)$, where $p_i\ge 2$.
Since $r_2\ne 1/2$ and the state surface $F_{\sigma_+}$ for $K=M(-r_1,r_2,r_3)$ is orientable, for the corresponding slope $t_2=-r_2/(r_2-1)$ of a subtangle $T_2$ in Lemma \ref{deform}, $\lfloor t_2\rfloor$ is an even integer greater than or equal to $2$.
Hence, $p_2\ge 3$ and the checkerboard surface for $K=M(1-r_1,r_2-1,r_3)$ is algebraically incompressible and boundary incompressible.

In Case (2)-(b), by using Lemma \ref{deform}, we can deform the Montesinos knot $K=M(-r_1,1/2,r_3)$ into $K=M(1-r_1,-1/2,r_3)$.
Since $K$ is connected, $r_1\ne -1/2$.
Hence, we have $r_1>1/2$, $1-r_1<1/2$ and $\lceil 1/(1-r_1)\rceil\ge 3$.
Therefore, the checkerboard surface for $K$ cannot be deplumbed to one for $K'=P(q,-2,p)$, where $q\ge 3$ and $p$ is odd.
Eventually, we arrive at Case (1).
\end{proof}

\begin{example}\label{10_128}
$10_{128}$ bounds a pre-essential checkerboard surface.
Figure \ref{10_128a} shows a regular neighbourhood of a checkerboard surface for $10_{128}=M(3/7,-1/2,1/3)$.

\begin{figure}[htbp]
	\begin{center}
	\includegraphics[trim=0mm 0mm 0mm 0mm, width=.6\linewidth]{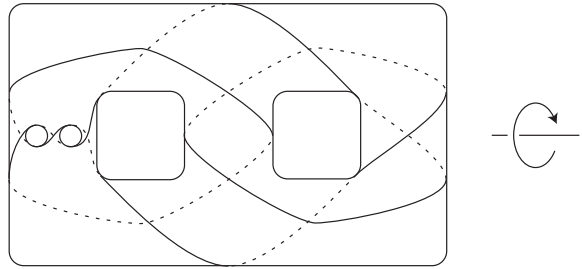}
	\end{center}
	\caption{A regular neighbourhood of a checkerboard surface for $10_{128}=M(3/7,-1/2,1/3)$}
	\label{10_128a}
\end{figure}

After twisting the right-hand half, a compressing disk appears in the outside region as in Figure \ref{10_128b}.

\begin{figure}[htbp]
	\begin{center}
	\includegraphics[trim=0mm 0mm 0mm 0mm, width=.5\linewidth]{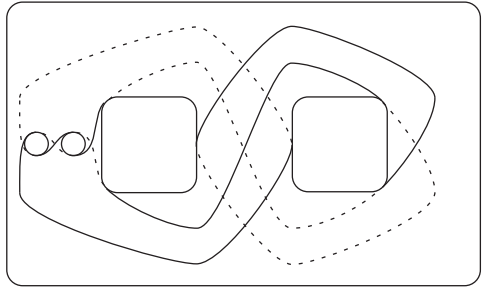}
	\end{center}
	\caption{$10_{128}$ on a genus $4$ closed surface of boundary slope $16$}
	\label{10_128b}
\end{figure}

Therefore, a checkerboard surface for a Montesinos knot $M(3/7,-1/2,1/3)$ is pre-essential.
Similarly, we can observe that a $(-2,3,3)$-pretzel knot, equivalently the $(3,4)$-torus knot, bounds a pre-essential checkerboard surface.
\end{example}

\begin{example}\label{10_139}
$10_{139}$ bounds a $2$-pre-essential checkerboard surface.
Let $F$ be a checkerboard surface for $10_{139}=M(1/3,-3/4,1/3)$ and $V_1$ be a regular neighbourhood of $F$ as in Figure \ref{10_139a}.
Put $S=\partial V_1$ and $V_2=S^3-\text{int}V_1$.
Then, similarly to the case of $10_{128}$, there exists a compressing disk $A$ for $S-K$ in $V_2$ whose boundary is denoted by $a$.

\begin{figure}[htbp]
	\begin{center}
	\includegraphics[trim=0mm 0mm 0mm 0mm, width=.5\linewidth]{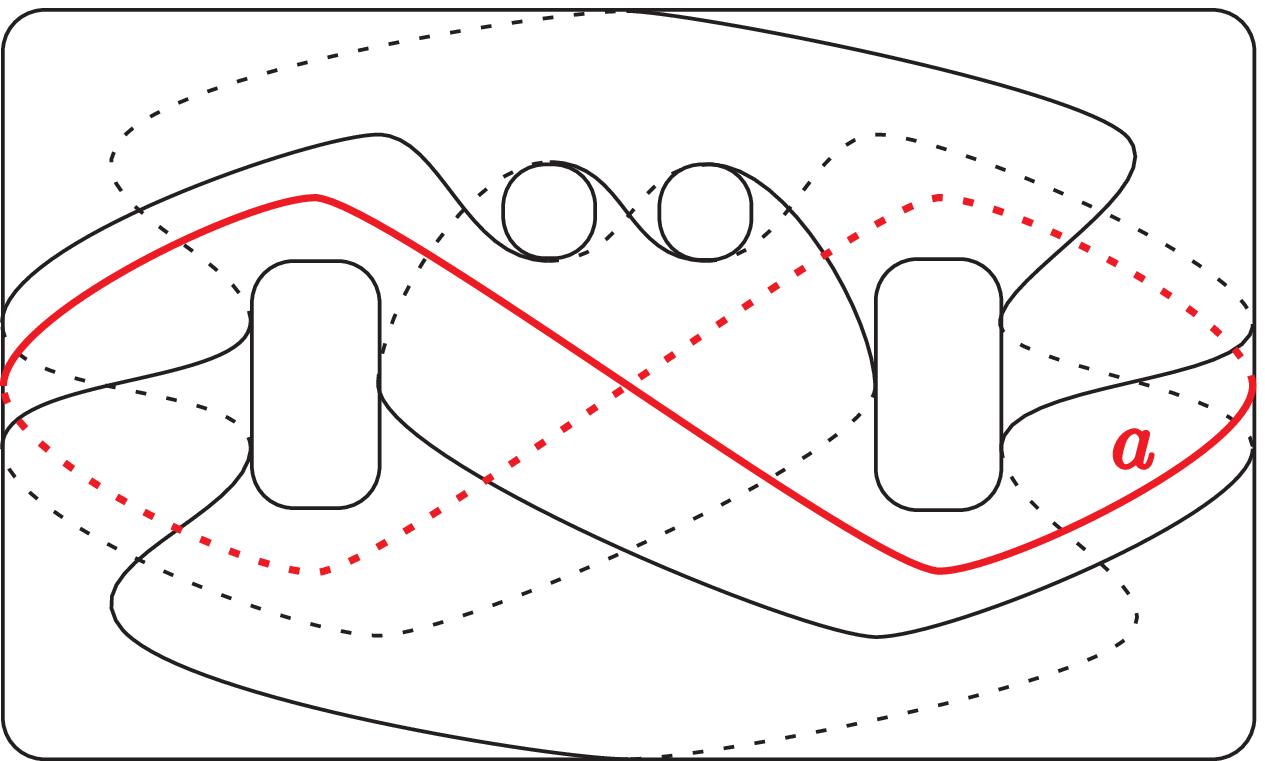}
	\end{center}
	\caption{$10_{139}$ on a genus $4$ closed surface}
	\label{10_139a}
\end{figure}

After compressing $S$ along $A$, we obtain a genus $3$ closed surface $S'$ which separates $S^3$ into $V_1'$ and $V_2'$ respectively.
Then, there exists a second compressing disk $B$ for $S'-K$ in $V_1'$ whose boundary is denoted by $b$.
We remark that this compressing disk $B$ can not be found in the original pair $(S,K)$, however it appears after isotoping a portion of $K$ as in Figure \ref{10_139b}.
This isotopy can be done by sliding a portion of $K$ along the compressing disk $A$ in $S\cup A$.

\begin{figure}[htbp]
	\begin{center}
	\includegraphics[trim=0mm 0mm 0mm 0mm, width=.5\linewidth]{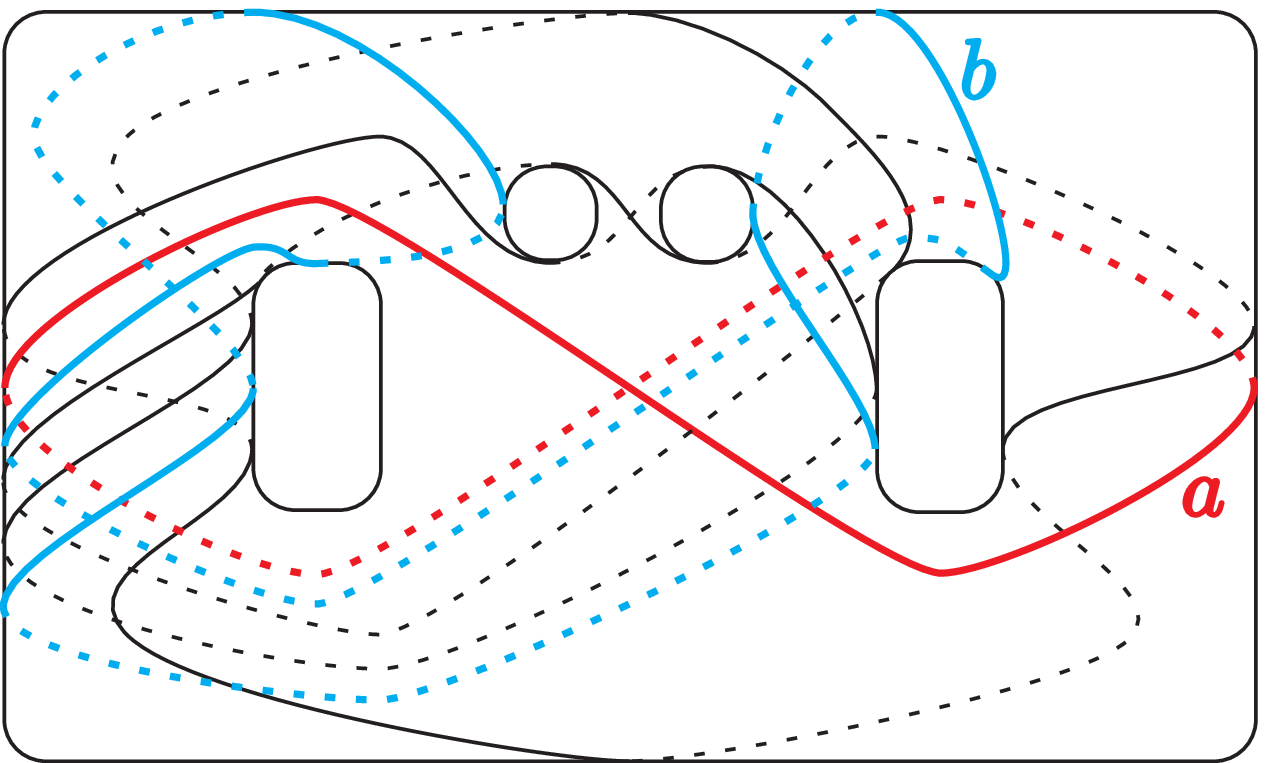}
	\end{center}
	\caption{$10_{139}$ on a genus $4$ closed surface}
	\label{10_139b}
\end{figure}

After compressing $S$ along two disks $A$ and $B$, we obtain a genus two closed surface which is a regular neighbourhood of a non-orientable, algebraically incompressible and boundary incompressible spanning surface $F'$ as in Figure \ref{10_139c}.

\begin{figure}[htbp]
	\begin{center}
	\includegraphics[trim=0mm 0mm 0mm 0mm, width=.5\linewidth]{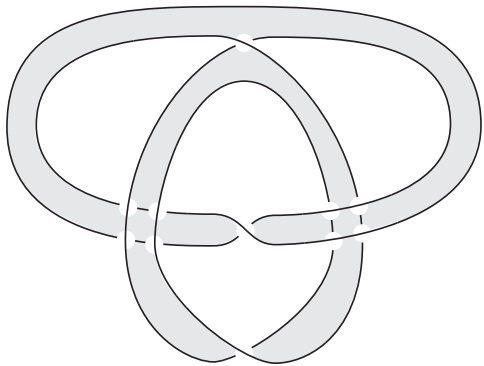}
	\end{center}
	\caption{A non-orientable, algebraically incompressible and boundary incompressible spanning surface $F'$ for $10_{139}$}
	\label{10_139c}
\end{figure}
\end{example}

\begin{theorem}
Knots with 11 crossings or less except for $K11_n118$ and $K11_n126$ satisfy the Neuwirth conjecture.
\end{theorem}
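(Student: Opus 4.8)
The plan is to go through the census of prime knots with at most $11$ crossings (in the Hoste--Thistlethwaite enumeration, so that $K11_n118$ and $K11_n126$ have their standard meaning) and verify the Neuwirth conjecture knot by knot, leaning on the structural results of the previous sections and falling back on a small number of ad hoc constructions for the stubborn cases. First I would clear the bulk of the list. By Theorem~\ref{checkerboard} every reduced alternating diagram carries two essential checkerboard surfaces, at least one of which is non-orientable, so all alternating knots up to $11$ crossings satisfy (even the strong) Neuwirth conjecture; and by Theorem~\ref{Montesinos} all Montesinos knots do as well, which absorbs the $2$-bridge and pretzel knots and a large part of the rest. For the remaining non-alternating, non-Montesinos knots I would search, diagram by diagram, for a state $\sigma$ other than the Seifert state for which the diagram is $\sigma$-adequate and $\sigma$-homogeneous; by Theorem~\ref{state} and its corollary such a knot satisfies the strong Neuwirth conjecture. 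In practice this handles every adequate knot and every $\pm$-adequate knot which is not positive or negative, and clears most of the non-alternating $11$-crossing table.

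For the knots that survive this triage I would bring in the minor/deplumbing machinery. If a given diagram can be deplumbed along a plumbing sphere (Theorem~\ref{Murasugi sum}) to expose a pretzel knot or link summand whose pretzel surface is algebraically incompressible and boundary incompressible by Theorem~\ref{pretzel}, or more generally a checkerboard surface coming from a $2$-connected weighted planar graph as in Theorem~\ref{checkerboard}, then the non-orientable summand surface glues up to an algebraically incompressible and boundary incompressible non-orientable spanning surface, giving the strong Neuwirth conjecture. In parallel I would exploit proper degree one maps: using the tabulated epimorphisms and degree one maps between small knots together with Theorem~\ref{degreeone} (and Corollary~\ref{pre-essentialpullback}), any knot in the table admitting a proper degree one map onto a knot already verified is immediately verified. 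For the few remaining knots whose exteriors are known to contain a closed incompressible surface $\Sigma$, I would check the homological hypothesis of Theorem~\ref{large}, namely that $H_1(\Sigma,\Bbb{Z})\to H_1(E(K),\Bbb{Z})$ is nonzero while $H_1(\Sigma,\Bbb{Z}_2)\to H_1(E(K),\Bbb{Z}_2)$ is zero; when it holds the Neuwirth conjecture follows, and when the mod-$2$ condition fails one can sometimes instead produce a pre-essential surface directly via Theorem~\ref{homology} by locating an essential non-Seifert surface at an even boundary slope with vanishing image in $H_1(E(K),\Bbb{Z}_2)$, using the boundary-slope computations of \cite{HT2}, \cite{HO}, \cite{D} and \cite{DG}.

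Finally I would isolate $K11_n118$ and $K11_n126$ and explain why none of the above applies: they are hyperbolic, so there is no Seifert-fibered, torus or cable structure; they are not Montesinos and not (generalized) alternating; their standard diagrams carry no non-Seifert $\sigma$-adequate $\sigma$-homogeneous state; they admit no useful deplumbing to a pretzel minor covered by Theorem~\ref{pretzel} or Theorem~\ref{checkerboard}; they are small, so Theorem~\ref{large} is unavailable; and their boundary-slope data does not produce an even non-Seifert slope whose surface meets the $\Bbb{Z}_2$-homology condition of Theorem~\ref{homology}, nor is there a known degree one map from them onto a verified knot. The point I would stress is that these are not counterexamples — the Neuwirth conjecture may well hold for them — but simply the two knots for which all the methods assembled here are silent.

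The main obstacle is exactly this last step together with the intermediate residue after the adequacy triage: for each such knot one must either exhibit an explicit non-orientable algebraically boundary incompressible surface (so that Lemma~\ref{pre-essential} and Lemma~\ref{geometrically incompressible} apply) or a concrete degree one map, and verifying the \emph{absence} of a pre-essential surface for the two exceptional knots requires ruling out, via the known finite lists of boundary slopes, every essential non-Seifert surface at an even slope with the correct mod-$2$ homology. That negative verification is delicate and is the reason the two knots are left open rather than settled.
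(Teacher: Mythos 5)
Your proposal captures the broad strategy of the paper --- clear the census with $\sigma$-adequate, $\sigma$-homogeneous state surfaces, then absorb the residue with torus-knot, Montesinos, and pretzel arguments --- but it never becomes a proof. Several concrete gaps remain. First, the engine of the paper's argument is the explicit computation recorded in \cite{O2}: it is there, not in a hypothetical ``diagram-by-diagram search,'' that one learns exactly which knots of at most $11$ crossings fail to have a positive or negative non-Seifert $\sigma$-adequate $\sigma$-homogeneous diagram. The list is short and specific: $8_{19}$, $10_{124}$, $10_{128}$, $10_{134}$, $10_{139}$, $10_{142}$ among $\le 10$ crossings, and $K11_n93$, $K11_n95$, $K11_n118$, $K11_n126$, $K11_n136$, $K11_n169$, K$11_n171$, $K11_n180$, $K11_n181$ among $11$-crossing knots. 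Without citing or reproducing that computation your triage is a plan, not an argument, and the claim that it ``clears most of the non-alternating $11$-crossing table'' is unsubstantiated.

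Second, once the residue is in hand, the paper resolves it in a much narrower and more concrete way than you suggest: $8_{19}$ and $10_{124}$ are the $(3,4)$ and $(3,5)$ torus knots; $10_{128}$ and $10_{139}$ are Montesinos, so Theorem~\ref{Montesinos} applies; $10_{142}$ is the $(-4,3,3)$ pretzel knot, handled by Theorem~\ref{pretzel}; and the seven non-exceptional $11$-crossing knots are disposed of by directly exhibiting algebraically incompressible and boundary incompressible non-orientable checkerboard surfaces (after a Reidemeister III move where necessary). None of the additional machinery you invoke --- degree one maps and Theorem~\ref{degreeone}, the closed-surface homology criterion of Theorem~\ref{large}, the pre-essential-surface criterion of Theorem~\ref{homology}, or boundary-slope tabulations --- is used, or needed, for this theorem; listing them obscures rather than supplies the missing steps. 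Third, your concluding paragraph asserting that the assembled methods are ``silent'' for $K11_n118$ and $K11_n126$ is offered without verification; in the paper these two knots are simply left open because they are the ones not resolved by the specific checks above, not because a systematic negative verification against all methods has been carried out.
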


\begin{proof}
It has been confirmed in \cite{O2} that every 10 crossing knot diagram in the Rolfsen knot table \cite{R} except for these positive knots $8_{19}$, $10_{124}$, $10_{128}$, $10_{134}$, $10_{139}$ and $10_{142}$ is $\sigma$-adequate and $\sigma$-homogeneous for a positive or negative state $\sigma$ distinct from the Seifert state $\vec{\sigma}$.

$8_{19}$ and $10_{124}$ are equivalent to the $(3,4)$ and $(3,5)$-torus knot respectively and hence these knots satisfy the Neuwirth Conjecture.

$10_{128}$ and $10_{139}$ are Montesinos knots and by Theorem \ref{Montesinos}, these knots satisfy the Neuwirth Conjecture.

$10_{142}$ is the $(-4,3,3)$-pretzel knot and by Theorem \ref{pretzel}, it bounds an algebraically incompressible and boundary incompressible non-orientable pretzel surface.
Hence $10_{142}$ satisfies the Neuwirth Conjecture.

It has been be confirmed in \cite{O2} that every 11 crossing knot diagram in the Hoste-Thistlethwaite knot table \cite{HT} except for $K11_n93$, $K11_n95$, $K11_n118$, $K11_n126$, $K11_n136$, $K11_n169$, $K11_n171$, $K11_n180$ and $K11_n181$ is also $\sigma$-adequate and $\sigma$-homogeneous for a positive or negative state $\sigma$ distinct from the Seifert state $\vec{\sigma}$.

Furthermore, it can be checked that $K11_n93$, $K11_n95$, $K11_n136$, $K11_n169$, $K11_n171$, $K11_n180$ and $K11_n181$ bound algebraically incompressible and boundary incompressible non-orientable checkerboard surfaces.
(You might need to deform the diagram by the Reidemeister move of type III.)
\end{proof}

\begin{remark}
It would be interesting to prove the Neuwirth conjecture for positive knots since the $\sigma_+$-state surface for a positive state $\sigma_+$ is non-orientable.
Moreover, it seems to be not straightforward to prove the Neuwirth conjecture for positive knots, since all torus knots are positive knots.
\end{remark}

\subsection{Generalized arborescently alternating links}

In this subsection, we generalize the classes of generalized alternating links and arborescent links, and show that any link in such a huge link class bounds an algebraically incompressible and boundary incompressible generalized state surface.

First, we recall generalized alternating knots and links.
Let $F$ be a closed surface embedded in $S^3$ and $K$ a knot or link contained in $F\times [-1,1]$.
Suppose that $p(K)$ is a regular projection on $F$, where $p:F\times [-1,1]\to F\times \{0\} = F$ is the projection.
Then, we have a regular diagram on $F$ obtained from $p(K)$ by adding the over/under information to each double point, and we denote it by the same symbol $p(K)$ in this paper.
As usual, a diagram $p(K)$ on $F$ is said to be {\em alternating} if it has alternating over- and under-crossings as the diagram $p(K)$ is traversed on $F$.
We say that a diagram $p(K)$ on $F$ is {\em reduced} if there is no disk region of $F-p(K)$ which meets only one crossing.
We say that a diagram $p(K)$ on $F$ is {\em prime} if it contains at least one crossing and for any loop $l$ intersecting $p(K)$ in two points except for crossings, there exists a disk $D$ in $F$ bounded by $l$ such that $D\cap p(K)$ consists of an embedded arc.

\begin{theorem}[\cite{O}]\label{generalized alternating}
Let $F$ be a closed surface embedded in $S^3$, $K$ a knot or link contained in $F \times [-1,1]$ which has a reduced, prime, alternating diagram on $F$.
Then, both checkerboard surfaces for $K$ are algebraically incompressible and boundary incompressible.
\end{theorem}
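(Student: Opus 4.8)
\emph{Proof sketch and proposal.} The plan is to adapt Menasco's ball-decomposition technique for alternating diagrams on $S^2$ to the thickened surface $F\times[-1,1]$. It suffices to treat the black checkerboard surface $B$, since after interchanging the two colours the diagram $p(K)$ on $F$ is still reduced, prime and alternating and the white surface is handled by the identical argument. By the Loop Theorem, Dehn's Lemma, and their relative versions, it is enough to show that the properly embedded surface $B\cap E(K)$ has no embedded compressing disk and no embedded boundary-compressing disk in $E(K)$; this is precisely the geometric reformulation of the two algebraic statements for a (possibly one-sided) spanning surface. So I would suppose for contradiction that such a disk $D$ exists.

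First I would put everything in standard position: small crossing balls $B_{c_1},\dots,B_{c_n}$ inside $F\times[-1,1]$, with $K$ running monotonically through each $B_{c_i}$ and flat (lying in $F\times\{0\}$) outside them, and with $B$ built from the black regions, pushed slightly off $F$, together with the half-twisted bands inside the crossing balls. Then I would isotope $D$ so that it is transverse to $F\times\{0\}$ and to each $\partial B_{c_i}$, so that $D\cap(F\times\{0\})$ is a disjoint union of simple closed curves and, in the boundary-compression case, properly embedded arcs, so that $D$ meets each crossing ball only in standard saddle disks, and so that, subject to these constraints, the pair consisting of the number of components of $D\cap(F\times\{0\})$ and the number of saddles is lexicographically minimal; finally I would arrange each component of $D\cap F$ to meet $p(K)$ transversely and minimally.

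The core of the proof is then an innermost-circle and outermost-arc analysis of $D\cap F$. For an innermost circle $c$ bounding a subdisk $D_0\subseteq D$: if $c$ is inessential in $F$ and disjoint from $p(K)$, then $D_0$ caps off against a disk in $F$ to a $2$-sphere, which bounds a ball by irreducibility of $S^3$, and pushing $D_0$ across it lowers the complexity; if $c$ meets $p(K)$ in exactly two points, primeness of $p(K)$ provides a disk $\delta\subset F$ with $\delta\cap p(K)$ a single arc, and (since a prime diagram has a prime underlying knot or link) the resulting sphere meeting $K$ in two points is standard, again permitting a reduction; and if $c$ meets $p(K)$ in four or more points, I would run Menasco's combinatorics on the saddle arcs that $D$ cuts on $B$, for which the \emph{alternating} hypothesis is exactly what forces an innermost configuration of over- or under-strands to be reducible, contradicting minimality, while \emph{reducedness} excludes the remaining nugatory configuration. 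Outermost arcs of $D\cap F$ are treated the same way, now using that $B$ is algebraically boundary incompressible, i.e.\ that no essential arc of $B\cap E(K)$ is homotopic rel endpoints into $\partial E(K)$. Once $D\cap F=\emptyset$, the disk $D$ lies in one complementary region $V_\pm$ of $F$ in $S^3$, where $\partial N(B)$ meets $V_\pm$ only in disks (copies of black regions) and once-punctured disks (white regions), and a curve essential on $B$ cannot bound there; this is the final contradiction.

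The genuinely new phenomenon, absent when $F=S^2$, is an innermost circle $c$ of $D\cap F$ that is \emph{essential} in $F$. Such a $c$ cannot meet $p(K)$ in two points (primeness would then force it to bound a disk in $F$), so it meets $p(K)$ in zero or at least four points; the latter is absorbed into the Menasco reduction above, and in the former case $D_0$ is a compressing disk for $F$ in $S^3\setminus K$. I would then compress $F$ along such a disk, verify that $K$ still lies in $F_1\times[-1,1]$ with a reduced, prime, alternating diagram on the lower-genus surface $F_1$, and induct on the genus of $F$, the base case $F=S^2$ being the classical theorem that the checkerboard surfaces of a reduced, prime, alternating diagram on the sphere are algebraically incompressible and boundary incompressible. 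I expect the main obstacle to be exactly this compression step: one must show that the compressing circle can be chosen disjoint from $p(K)$ and that capping off along it creates no disk region meeting a single crossing and destroys no certifying disk for primeness, so that the inductive hypothesis genuinely applies. The remainder is a lengthy but routine transcription of Menasco's alternating-diagram arguments from the sphere to the surface $F$.
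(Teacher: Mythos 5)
The paper does not prove this statement; it is quoted as Theorem~2 of \cite{O} and used as a black box, so there is no in-paper proof to compare your sketch against. Taking your proposal on its own terms, the Menasco-style plan (crossing balls in $F\times[-1,1]$, innermost-circle/outermost-arc reduction of $D\cap F$, primeness to remove two-point intersections, alternation to handle four-or-more-point intersections, reducedness against the nugatory case, and a final argument in one complementary region) is essentially the strategy of \cite{O}, so the overall shape is right. However, two steps in your sketch do not hold up.

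First, the opening reduction via the Loop Theorem and Dehn's Lemma is not available here. Checkerboard surfaces are generically one-sided, and those theorems give the equivalence of algebraic and geometric incompressibility only for two-sided surfaces; for a one-sided spanning surface $B$, the absence of an embedded compressing or boundary-compressing disk does \emph{not} imply $\pi_1$-injectivity. The correct move is to pass to the two-sided surface $\partial N(B)$ and show that it (equivalently, $B$ on each side) admits no compression or boundary compression in $E(K)\setminus \mathrm{int}\,N(B)$, which is what \cite{O} actually does.

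Second, the ``genuinely new phenomenon'' you flag --- an innermost circle $c$ of $D\cap F$ that is essential in $F$ and disjoint from $p(K)$ --- cannot be handled by the compression-and-induct-on-genus step you propose, and in fact must be excluded by hypothesis. The theorem needs (and \cite{O} uses) the condition that the regions of $F\setminus p(K)$ are open disks, i.e.\ that the diagram fills $F$. Without that hypothesis the statement as reproduced here is actually false: take $F$ to be the standard unknotted torus and draw a reduced prime alternating knot diagram inside a disk of $F$; then one checkerboard surface contains a once-punctured torus, and essential curves in that piece bound embedded disks in $E(K)$ on the far side of $F$, so the surface is not $\pi_1$-injective. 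Once the filling hypothesis is in place, an essential circle on $F$ disjoint from $p(K)$ cannot occur, so your induction is unnecessary. As sketched it also does not go through: you never verify that the surgered surface $F_1$ remains closed, connected, embedded and two-sided, that $K$ still lies in $F_1\times[-1,1]$, or that the induced diagram on $F_1$ is still reduced, prime and filling; and if $c$ is separating, $F_1$ disconnects and the inductive hypothesis cannot even be applied.
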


Next, we introduce generalized arborescently alternating knots and links.
Let $S$ be a closed surface embedded in $S^3$ and $K$ a knot or link contained in $S\times [-1,1]$.
Let $p(K)$ be a connected diagram for $K$ on $S$.
A closed surface $S$ separates $S^3$ into two submanifolds, say $V_+, V_-$.
Let $\sigma : \mathcal{C}\to \{+,-\}$ be a state for $p(K)$, and let $\mathcal{C}=\{c_1,\ldots,c_n\}$ be the set of crossings of $p(K)$.
For each crossing $c_i\in \mathcal{C}$, we take a $+$-smoothing or $-$-smoothing according to $\sigma(c_i)=+$ or $-$.
See Figure \ref{resolution}.
Then, we have the set of state loops $\mathcal{L}_{\sigma} = \{l_1,\ldots, l_m\}$ on $S$.

Suppose that each state loop $l_i$ bounds two disks $d_i^+$, $d_i^-$ in $V_+$, $V_-$ respectively, and we may assume that these disks are mutually disjoint.
For each crossing $c_j$ and state loops $l_i,l_k$ whose subarcs replaced $c_j$ by $\sigma(c_j)$-smoothing, we attach a half-twisted band $b_j$ to $d_i^-$, $d_k^-$ so that it recovers $c_j$.
See Figure \ref{recover} for $\sigma(c_j)=+$.
In this way, we obtain spanning surfaces which consist of disks $d_1^-,\ldots,d_m^-$ and half-twisted bands $b_1,\ldots,b_n$ and call this {\em $\sigma$-state surfaces} $F_{\sigma}$.

\begin{remark}
Similarly to Remark \ref{option}, there are two options to choose a disk $d_i^{\pm}$ in $B_{\pm}$ bounded by each state loop $l_i$ which is not innermost in the closed surface $S$.
Therefore, in general, there are many state surfaces for a given state.
\end{remark}

We construct a graph $G_{\sigma}$ with signs on edges from $F_{\sigma}$ by regarding a disk $d_i^-$ as a vertex $v_i$ and a band $b_j$ as an edge $e_j$ which has the same sign $\sigma(c_j)$.
We call graphs $G_{\sigma}$ {\em $\sigma$-state graphs}.
Let $G_{\sigma}=G_1* \cdots *G_r$ be the block decomposition of $G_{\sigma}$.
Following \cite{LT} and \cite{C1}, we say that a diagram $p(K)$ is {\em $\sigma$-adequate} if each block $G_k$ has no loop,
 and that $p(K)$ is {\em $\sigma$-homogeneous} if in each block $G_k$, all edges have the same sign.

Furthermore, the block decomposition of $G_{\sigma}=G_1* \cdots *G_r$ corresponds to a Murasugi decomposition of $F_{\sigma}=F_1* \cdots *F_r$. See Figure \ref{decomposition_fig}.
By cutting the closed surface $S$ along each state loop $l_i$ and pasting in the disk $d_i^-$, we have closed surfaces $S_1,\ldots,S_r$ which include knot or link diagrams $p(K_1),\ldots,p(K_r)$ respectively, where $K_i=F_i$.

\begin{theorem}\label{generalized arborescently alternating}
Let $S$ be a closed surface embedded in $S^3$, $K$ a knot or link contained in $S \times [-1,1]$.
Suppose that there exists a state $\sigma$ for $p(K)$ such that:
\begin{enumerate}
\item each state loop $l_i\in \mathcal{L}_{\sigma}$ bounds two disks in both sides of $S$,
\item the diagram $p(K)$ is $\sigma$-adequate,
\item the diagram $p(K)$ is $\sigma$-homogeneous,
\item each diagram $p(K_i)$ is reduced and prime on $S_i$.
\end{enumerate}
Then, the state surface $F_{\sigma}$ for $K$ is algebraically incompressible and boundary incompressible.
Furthermore, if $\sigma$ can be taken so that it is not the Seifert state $\vec{\sigma}$, then the knot $K$ satisfies the strong Neuwirth conjecture.
\end{theorem}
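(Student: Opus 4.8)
The plan is to imitate the proof of Theorem \ref{state}: decompose $F_{\sigma}$ as a Murasugi sum whose summands are checkerboard surfaces of generalized alternating diagrams, and then apply Theorem \ref{generalized alternating}. Concretely, I would first invoke the fact recorded just before the statement that the block decomposition $G_{\sigma}=G_1*\cdots*G_r$ induces a Murasugi decomposition $F_{\sigma}=F_1*\cdots*F_r$, where $F_k$ is a state surface for the induced state on the diagram $p(K_k)$ carried by the closed surface $S_k$ obtained from $S$ by compressing along the state loops along the disks $d_i^-$. Hypothesis (1) is exactly what makes this globally legitimate: since each state loop bounds disks on both sides of $S$ in $S^3$, each $S_k$ is genuinely a closed surface embedded in $S^3$ with $K_k\subset S_k\times[-1,1]$, and one must also check that the local model of Figure \ref{decomposition_fig} patches together to an honest Murasugi decomposition over all of $S$. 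By Theorem \ref{Murasugi sum} it then suffices to prove that each summand $F_k$ is algebraically incompressible and boundary incompressible.

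The main step is to identify each $F_k$, up to isotopy, with a checkerboard surface of a reduced, prime, alternating diagram on the closed surface $S_k$, so that Theorem \ref{generalized alternating} applies. Because $p(K)$ is $\sigma$-homogeneous, every edge of the block $G_k$ has the same sign, say $+$ (the other case being symmetric); hence the induced state on $p(K_k)$ is the positive state, which is automatically homogeneous, and it is adequate because $p(K)$ is $\sigma$-adequate. As in \cite{O2}, flattening the half-twisted bands of $F_k$ into crossings exhibits $F_k$ as a checkerboard surface of a link diagram on $S_k$; the single-sign condition forces this diagram to be alternating, and adequacy forces it to have no nugatory crossing. Hypothesis (4) then supplies the remaining requirement, that $p(K_k)$ be reduced and prime on $S_k$, so Theorem \ref{generalized alternating} of \cite{O} shows that both checkerboard surfaces of $p(K_k)$, in particular $F_k$, are algebraically incompressible and boundary incompressible. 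Feeding this back through Theorem \ref{Murasugi sum} gives the first assertion.

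For the strong Neuwirth conclusion I would argue, exactly as in the corollary to Theorem \ref{state}, that a state surface of a connected diagram is orientable precisely when the state is the Seifert state $\vec{\sigma}$, since at each crossing only the Seifert smoothing is compatible with an orientation of $K$. Thus if $\sigma$ can be chosen with $\sigma\neq\vec{\sigma}$, the surface $F_{\sigma}$ is a non-orientable spanning surface for $K$ which, by the first part of the theorem, is algebraically incompressible and boundary incompressible; hence $K$ satisfies the strong Neuwirth conjecture, Conjecture \ref{strong}.

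The hard part will be the main step: performing, on a closed surface of possibly positive genus, the identification of a homogeneous-block state surface with a checkerboard surface of a reduced, prime, alternating diagram, and checking that hypotheses (1) and (4) are precisely what is needed both for that identification and for the hypotheses of Theorem \ref{generalized alternating} to hold. In the planar case this bookkeeping is carried out in \cite{O2} and \cite{O}; the genuinely new content is the passage to positive genus, where a state loop need not bound a disk in $S$ and where primeness of the auxiliary diagrams $p(K_k)$ is not automatic, which is exactly why those two extra hypotheses appear in the statement. A subsidiary technical point is to confirm that the Murasugi decomposition of a state surface carried by a closed surface respects the block decomposition of $G_{\sigma}$ in the same way as over $S^2$.
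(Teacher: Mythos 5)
Your proposal is correct and follows essentially the same route as the paper: decompose $F_{\sigma}$ along the block decomposition of $G_{\sigma}$ into a Murasugi sum of checkerboard surfaces for the induced reduced, prime, alternating diagrams $p(K_i)$ on $S_i$, apply Theorem \ref{generalized alternating} to each summand, and then Theorem \ref{Murasugi sum} to reassemble; the non-Seifert-state clause then gives non-orientability and hence the strong Neuwirth conjecture. The paper's own proof is a three-line version of exactly this argument, and your write-up supplies the bookkeeping it leaves implicit (in particular, the roles of hypotheses (1) and (4) in making the closed-surface case work).
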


\begin{proof}
The proof is similar to \cite{O2}.
By Theorem \ref{generalized alternating}, each spanning surface $F_i$ for $K_i$ is algebraically incompressible and boundary incompressible since $p(K_i)$ is a reduced, prime, alternating diagram on $S_i$.
And by Theorem \ref{Murasugi sum}, the $\sigma$-state surface $F_{\sigma}=F_1* \cdots *F_r$ is also algebraically incompressible and boundary incompressible.
\end{proof}

We call a link satisfying the conditions of Theorem \ref{generalized arborescently alternating} a {\em generalized arborescently alternating link} since it can be obtained from generalized alternating links on closed surfaces by taking Murasugi sums of those checkerboard surfaces.

\begin{remark}
The existence of knots or links which are not generalized arborescently alternating is unknown.
In \cite[Problem 1]{O2}, we have proposed a problem for showing the existence of a knot which has no $\sigma$-adequate and $\sigma$-homogeneous diagram.
\end{remark}

\subsection{Algorithms}

We show that there are algorithms to decide if a knot satisfies all of the different versions of the Neuwirth conjecture, except for the weakly strong one. The key techniques are obtained from \cite{JS} and \cite{JO}. 

\begin{theorem}\label{algorithms}
Suppose that $(S^3,K)$ is a knot. Then there are algorithms to decide if $K$ satisfies the Neuwirth conjecture, the strong Neuwirth conjecture, the even boundary slope conjecture and the strong even boundary slope conjecture.
\end{theorem}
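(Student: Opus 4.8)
The plan is to build the algorithms on the theory of normal surfaces and the systematic enumeration of incompressible and boundary-incompressible surfaces due to Jaco--Oertel and the related machinery from \cite{JS} and \cite{JO}. The starting observation is that by Theorem \ref{homology} and Lemma \ref{pre-essential}, each version of the Neuwirth conjecture is equivalent to the existence of a properly embedded orientable incompressible and boundary incompressible surface in $E(K)$ meeting prescribed combinatorial constraints on its boundary: for the Neuwirth conjecture, a surface with exactly two boundary curves at an integer slope satisfying the $\Bbb{Z}_2$-homology condition of Theorem \ref{homology}; for the strong Neuwirth conjecture, additionally that the corresponding non-orientable surface has a single boundary curve (equivalently $G$ has two boundary curves at an integer slope, again with image zero in $H_1(E(K),\Bbb{Z}_2)$); for the even boundary slope conjecture and its strong version, a non-Seifert incompressible and boundary incompressible surface at an even rational (respectively even integer) boundary slope. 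The crucial point is that in every case the property is ``local'' in the sense of being detectable on a bounded-complexity surface.

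\emph{First} I would fix a triangulation $\mathcal T$ of $E(K)$ and recall the fundamental finiteness result: the incompressible and boundary-incompressible surfaces (up to isotopy) that are \emph{not} boundary-parallel and \emph{not} the fibre of a fibration appear, after normalization, among the vertex (or more generally fundamental) normal surfaces of $\mathcal T$, and these form a finite, algorithmically computable list. \emph{Next}, one must control the boundary slope: each normal surface has a well-defined boundary slope read off from its normal coordinates on the two boundary triangles, and by passing to a fixed number of parallel copies and tubing, any target slope (a given even integer, or ``some integer'', or ``some even rational'') can be realized as the slope of a member of an enlarged but still effectively enumerable family; then one checks incompressibility and boundary-incompressibility of each candidate using the algorithms for detecting compressing and boundary-compressing disks (again normal-surface-theoretic, as in \cite{JS}). \emph{Then} the homological side conditions---being orientable, having exactly the required number of boundary components at the required slope, and the condition that $H_1(G,\Bbb{Z}_2)\to H_1(E(K),\Bbb{Z}_2)$ has image zero---are all linear-algebra checks over $\Bbb{Z}$ or $\Bbb{Z}_2$ on the (computable) homology of the candidate surface and its inclusion, hence decidable. \emph{Finally}, for the two ``even boundary slope'' conjectures one must in addition certify that the surface is not a Seifert surface, i.e.\ that its boundary slope is not the longitude; but an even slope is never the longitudinal slope $0$ unless it equals $0$, so one discards slope-$0$ candidates and the rest automatically qualify. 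Running through the finite list and reporting success if any candidate passes all the checks, or failure otherwise, gives the decision procedure in each of the four cases.

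\emph{The main obstacle} I expect is controlling the boundary behaviour, and in particular verifying that nothing essential is missed by restricting to a computable family. The subtlety is twofold. First, a knot exterior may fibre over $S^1$, in which case the fibre surface is incompressible and boundary incompressible but not detected as a vertex normal surface of the chosen triangulation in the usual formulation; one must either detect fibring algorithmically (this is known to be decidable) and treat it as a separate case, or use the refinement of Jaco--Oertel that handles fibred manifolds. Second, and more delicately, the Neuwirth-type conditions demand a surface with a \emph{prescribed} number of boundary curves (two) at a \emph{prescribed} type of slope (integral), and a given fundamental surface may have the wrong number of boundary components or the wrong slope; one repairs this by adding boundary-parallel annuli and tubing along the boundary torus, but one must argue that the incompressibility and boundary-incompressibility are preserved or can be restored by a further bounded sequence of compressions (exactly the kind of argument used in the proof of Lemma \ref{geometrically incompressible} and Theorem \ref{homology}), and that the resulting operations do not lead outside an effectively bounded search space. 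Making this boundedness explicit---so that the search genuinely terminates---is the technical heart of the proof; once it is in place, all remaining verifications (orientability, $\Bbb{Z}_2$-homology vanishing, non-longitudinal slope, existence or non-existence of compressing and boundary-compressing disks) are standard applications of the algorithms in \cite{JS} and \cite{JO}.
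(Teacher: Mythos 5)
Your proposal correctly identifies the right toolkit---normal surfaces, the Jaco--Oertel and Jaco--Sedgwick machinery---and, to your credit, you also correctly identify the central difficulty: one needs a surface with a \emph{prescribed} number of boundary curves at a \emph{prescribed} kind of slope, and a vertex or fundamental normal surface is under no obligation to have the right boundary structure. But you then explicitly leave this difficulty unresolved, writing that ``making this boundedness explicit\ldots is the technical heart of the proof.'' The paper does resolve it, and the mechanism is one your proposal does not contain: double the exterior. Working in the closed manifold $2E(K)$, the sought-for surface $S$ becomes a closed incompressible surface $2S$, which (after isotoping to least weight normal form) can be written as a sum of vertex or fundamental normal solutions via \cite{JO}. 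The key step is then that, by \cite{JS}, no ``associated slopes'' arise because a least-weight representative has no trivial curves of intersection with $\partial E(K)$; consequently every summand either misses $\partial E(K)$ or meets it at the same slope as $S$, and a simple homology count on $\partial E(K)$ bounds the number of boundary curves distributed among the summands. This is what lets the algorithm search a finite constructible list for a candidate with the right boundary combinatorics, including the delicate longitudinal case, where one must pair two fundamental solutions with single boundary curves. Your proposed repair by ``adding boundary-parallel annuli and tubing'' does not substitute for this: tubing typically destroys incompressibility and gives no control over the search space, which is precisely the termination problem you flagged.

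There is also a factual error in the reduction you set up. The Neuwirth conjecture is not equivalent to the existence of a $G$ with the $\Bbb{Z}_2$-homology condition of Theorem \ref{homology}, and neither is the even boundary slope conjecture. Theorem \ref{homology} characterizes the existence of a \emph{pre-essential} surface; the Neuwirth conjecture is equivalent only to the existence of a connected orientable incompressible and boundary incompressible surface with two boundary curves at integer slope, with no $\Bbb{Z}_2$-homology constraint, and the even boundary slope conjecture merely asks for a non-Seifert essential surface at an even rational slope. In fact the paper's proof of Theorem \ref{algorithms} does not invoke Theorem \ref{homology} at all; it works directly with the definitions of Neuwirth surfaces, spanning surfaces, and boundary slopes. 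Injecting the $\Bbb{Z}_2$-homology condition would make your algorithm decide a different (and strictly stronger) property than the conjectures in question, so even if the enumeration step were repaired, the reduction would need to be corrected.
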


\begin{proof}
We first consider the even and strong even boundary slope conjectures. These follow from \cite{JS} and \cite{JO} as follows. By Corollaries 3.9, 3.10, 3.11 of \cite{JS}, there is an algorithm to construct the finite set of boundary slopes of all embedded normal surfaces and hence in particular of properly embedded orientable incompressible and boundary incompressible surfaces, which can be isotoped to be normal. These occur as slopes of the finite set of vertices of the projective solution space of normal surfaces in a one vertex triangulation of $E(K)$. 

Next, by doubling $E(K)$, we obtain a closed manifold $2E(K)$ with a one vertex triangulation which is symmetric under the involution interchanging the two copies of $E(K)$. We claim that if there is an incompressible and boundary incompressible orientable surface $S$ at some slope $\alpha$ of $K$, then there is a closed orientable incompressible surface $S^*$ at a vertex of the projective solution space of $2E(K)$, where $S^* \cap E(K)$ is similarly incompressible and boundary incompressible in $E(K)$ and has the same slope $\alpha$ as $S$. Since there are finitely many vertex solutions, each can be checked to see if it has an even rational or even integer boundary slope of intersections with the torus $\partial E(K)$ and if it is incompressible, using standard techniques as discussed in either \cite{JO} or \cite{JS}. Hence the algorithm to check if a knot satisfies the even or strong even boundary slope conjectures is complete, once we have verified the claim above. 

But the proof of the claim is also straightforward. Given $S$, it is obvious that $2S$ is a closed orientable incompressible surface in $2E(K)$ meeting $\partial E(K)$ in essential curves at slope $\alpha$. As usual, we can isotope $2S$ to be least weight normal and then using \cite{JO} write a multiple of the normal class of $2S$ as a sum of vertex solutions, each of which is incompressible. So it remains to see why we can assume that these vertex solutions give at least one surface of the form $S^*$ which meets $\partial E(K)$ at slope $\alpha$. But this follows by \cite{JS}. For there, a key result is that if a properly embedded normal surface in $E(K)$ at slope $\alpha$ is written as a Haken sum, then the summands must either have the same slope $\alpha$ or a unique associated slope $\beta$. The slope $\beta$ has the property that the sum of normal curves representing $\alpha$ and $\beta$ represents a multiple of the trivial curve in a one-vertex triangulation of a torus. But in our case, it is easy to see that no associated slopes can occur since clearly our least weight normal surface in the isotopy class of $2S$ has no trivial curves of intersection with $\partial E(K)$. Hence we conclude that all the vertex solutions obtained by decomposing a multiple of $2S$ are either disjoint from $\partial E(K)$ or meet $\partial E(K)$ with slope $\alpha$ and there must be at least one surface $S^*$ of the latter type. Note that $S^* \cap E(K)$ is boundary incompressible, since it is orientable and therefore a boundary compression would imply the existence of a compression. So this completes the discussion of the even and strong even boundary slope algorithms. 

We now consider the Neuwirth conjecture and strong Neuwirth conjectures for a knot $(S^3,K)$. Suppose that there is an orientable incompressible and boundary incompressible surface $S \subset E(K)$ with $\partial S$ consisting of two curves with integer slope. The key idea is to use fundamental normal solutions rather than vertex normal solutions. Recall that a fundamental normal surface $F$ has the property that $F$ cannot be written as a Haken sum of two non-empty normal surfaces. Haken (\cite{Ha}) proved that there is a finite set of constructible fundamental normal surfaces in any closed triangulated $3$-manifold. 

The procedure is similar to the case of the boundary slope conjectures. Namely given $S$, we can assume that $2S$ is a closed orientable incompressible least weight surface in its isotopy class in $2E(K)$. Moreover we still have the key property that $2S \cap \partial E(K)$ consists of two copies of an essential curve with integer boundary slope. Now if we write $2S$ as a sum of least weight fundamental normal surfaces, using \cite{JO}, it follows that each of these is again a closed orientable incompressible surface in $2E(K)$. But we can also deduce from \cite{JS} that each of these summands either is disjoint from $\partial E(K)$ or meets $\partial E(K)$ in essential curves at the same slope as $S$. (As usual, since all these surfaces are least weight, no trivial curves of intersection with $\partial E(K)$ can occur). Moreover it is easy to see using an elementary homology argument on $\partial E(K)$ that the total number of curves of intersection of all the summands with $\partial E(K)$ must be two. Hence we see that either one summand has two such curves or two summands have one curve of intersection each. The first case gives us the result required - by checking all fundamental solutions we search for one with two intersection curves with $\partial E(K)$ which is incompressible. It then splits along $\partial E(K)$ into two surfaces each of which is a Neuwirth surface for $K$. In the second case, splitting a surface with one curve of intersection with $\partial E(K)$ gives two spanning surfaces for $K$ which must be either orientable Seifert surfaces, in which case the boundary slope of $S$ is longitudinal, or are non orientable. In the latter case, as usual, a boundary of a regular neighbourhood of such a surface is the required Neuwirth surface. Hence we have proved that there is an algorithm to verify the Neuwirth conjecture in all cases except where the boundary slope is longitudinal. 

For the final case, suppose that the Neuwirth surface $S$ we seek is longitudinal. In this case we have to complete the argument in case $2S$ splits as a sum of fundamental solutions so that two of them have one curve of intersection with $\partial E(K)$ and split along $\partial E(K)$ into orientable Seifert surfaces for $K$. It is easy to see that these two fundamental surfaces have different normal classes, so that when they are combined by a Haken sum, a connected normal surface is obtained. But then there is a simple algorithm to complete the problem. Namely search amongst fundamental normal surfaces in $2E(K)$ for ones which meet $\partial E(K)$ in one longitudinal curve. For pairs of such surfaces check whether their Haken sum is incompressible and connected. Such a surface will then split along $\partial E(K)$ into two longitudinal Neuwirth surfaces as required. 

It remains to discuss the strong Neuwirth conjecture. This is very similar to the argument for the Neuwirth conjecture. Assume that $S$ is a non-orientable surface at an even integer boundary slope in $E(K)$, so that $S^\prime=\partial N(S)$ is incompressible and boundary incompressible in $E(K)$. By \cite{Ha}, we can write $[2S]=\Sigma n_i[F_i ]$ where $F_1,F_2, \ldots, F_k$ are fundamental normal surfaces in $2E(K)$ and $n_i$ are positive integers. By the arguments in the previous paragraphs, it follows that there must be some $i_0$ so that $n_{i_0}=1$ and $F_{i_0}$ meets $\partial E(K)$ in a single curve at the same boundary slope as $S$. Moreover each $F_i$ is disjoint from $\partial E(K)$ for each $i \ne i_0$. (This follows since the results of \cite{JS} on boundary slopes apply to normal surfaces, not just incompressible normal surfaces). We see that if $F_{i_0}$ is split along $\partial E(K)$, the result is either non-orientable spanning surfaces at the same boundary slope as $S$ or Seifert surfaces, if the slope of $S$ is longitudinal. However if all the surfaces $F_i$ are orientable, then so is $[2S]$, which is a contradiction. On the other hand, each $F_i$, for $i \ne i_0$, is a closed surface in $E(K)$ so must be orientable. Hence we conclude that $F_{i_0}$ must be non-orientable and must split along $\partial E(K)$ into at least one non-orientable spanning surface for $K$.

To complete the argument, we want to apply \cite{JO}. The equation $[2S]=\Sigma n_i[F_i ]$ implies $[2S^\prime] =2[2S]=\Sigma 2n_i[F_i ]$. But then by \cite{JO}, we deduce that all the surfaces $F_i$ are $\pi_1$-injective and so in particular $F_{i_0}$ is $\pi_1$-injective. Therefore $\partial N(F_{i_0})$ meets $E(K)$ in an incompressible and boundary incompressible surface. So this shows that $F_{i_0} \cap E(K)$ is an essential non-orientable spanning surface for $K$. We can now verify the strong Neuwirth conjecture, by searching amongst fundamental normal surfaces in $2E(K)$ for a solution $F_{i_0}$ with the property that $\partial N(F_{i_0})$  is incompressible in $2E(K)$ and $F_{i_0} \cap \partial E(K)$ has even integer boundary slope. 
\end{proof}

\bigskip

\noindent{\bf Acknowledgement.}
We would like to thank Kazuhiro Ichihara for giving us a helpful suggestion about Lemma \ref{deform}.
The research and writing of this work was carried out while the first author was visiting at Department of Mathematics and Statistics, The University of Melbourne during April 2009--March 2011.
The first author would like to thank the second author for his hearty welcome and a lot of joint work.

\bibliographystyle{amsplain}

\end{document}